\documentclass{amsart}


\usepackage{lipsum}
\usepackage{amsfonts}
\usepackage{amsmath}
\usepackage{amssymb}
\usepackage{amsthm}
\usepackage{hyperref}
\usepackage{epsf,latexsym,txfonts}
\usepackage{amsfonts}
\usepackage{fontawesome}
\usepackage{tikz}
\usetikzlibrary{arrows,matrix,positioning}
\usetikzlibrary{cd}
\usepackage{etex}
\usepackage{xy}
\usepackage{xspace}
\usepackage{mathrsfs}
\usepackage{graphicx}
\usepackage{epstopdf}
\usepackage{algorithmic}
\usepackage{verbatim}
\usepackage{ctable}
\ifpdf
\DeclareGraphicsExtensions{.eps,.pdf,.png,.jpg}
\else
\DeclareGraphicsExtensions{.eps}
\fi

\usepackage{enumitem}
\setlist[enumerate]{leftmargin=.5in}
\setlist[itemize]{leftmargin=.5in}







\newcommand{\wDelta}{\widehat{\Delta}}
\newcommand{\walpha}{\widehat{\alpha}}

\newcommand{\tS}{\mathtt{S}}

\newcommand{\tR}{\mathtt{R}}
\newcommand{\tJ}{\mathtt{J}}
\newcommand{\tI}{\mathtt{I}}

\newcommand{\R}{{\ensuremath{\mathbb{R}}}}
\newcommand{\calR}{{\ensuremath{\mathcal{R}}}}
\newcommand{\calJ}{{\ensuremath{\mathcal{J}}}}
\newcommand{\X}{{\ensuremath{\mathcal{X}}}}

\newcommand{\Proj}{{\ensuremath{\mathbb{P}}}}

\newcommand{\LBcs}{\ensuremath{\mathrm{LB}^{\text{\faStar}}}} 
\newcommand{\LBos}{\ensuremath{\mathrm{LB}^{\text{\faStarO}}}} 
\newcommand{\spl}{\ensuremath{\mathcal{S}}} 
\newcommand{\hspl}{\ensuremath{\mathcal{H}}} 
\newcommand{\inverseSystem}[1]{\ensuremath{{#1}^{\perp}}}

\DeclareMathOperator{\spn}{span}

\newtheorem{thm}{Theorem}[section]

\newtheorem{question}[thm]{Question}
\newtheorem{proposition}[thm]{Proposition}
\newtheorem{corollary}[thm]{Corollary}
\newtheorem{theorem}[thm]{Theorem}
\newtheorem{lemma}[thm]{Lemma}

\theoremstyle{definition}
\newtheorem{definition}[thm]{Definition}

\newtheorem{example}[thm]{Example}
\newtheorem{notation}[thm]{Notation}
\newtheorem{remark}[thm]{Remark}


\title{
	A lower bound for splines on tetrahedral vertex stars
}

\author[M. DiPasquale]{Michael DiPasquale}
\address{Michael DiPasquale\\     
	Department of Mathematics\\     
	Colorado State University}  
\email{michael.dipasquale@colostate.edu}
\urladdr{\url{https://midipasq.github.io/}}
\author[N. Villamizar]{Nelly Villamizar} 
\address{Nelly Villamizar\\
	Department of Mathematics\\
	Swansea University}
\email{n.y.villamizar@swansea.ac.uk}
\urladdr{\url{https://sites.google.com/site/nvillami}}

\begin{document}

\begin{abstract}
A tetrahedral complex all of whose tetrahedra meet at a common vertex is called a \textit{vertex star}.  Vertex stars are a natural generalization of planar triangulations, and understanding splines on vertex stars is a crucial step to analyzing trivariate splines.  It is particularly difficult to compute the dimension of splines on vertex stars in which the vertex is completely surrounded by tetrahedra -- we call these \textit{closed} vertex stars.  A formula due to Alfeld, Neamtu, and Schumaker gives the dimension of $C^r$ splines on closed vertex stars of degree at least $3r+2$.  We show that this formula is a lower bound on the dimension of $C^r$ splines of degree at least $(3r+2)/2$.  Our proof uses apolarity and the so-called \textit{Waldschmidt constant} of the set of points dual to the interior faces of the vertex star.  We also use an argument of Whiteley to show that the only splines of degree at most $(3r+1)/2$ on a generic closed vertex star are global polynomials.  
\end{abstract}

\keywords{
	Spline functions, apolarity, fat point ideals, Waldschmidt constant
}
\subjclass[2020]{
	65D07 , 41A15, 13D02, 14C20
}

\maketitle

\section{Introduction}

A multivariate spline is a piecewise polynomial function on a partition $\Delta$ of some domain $\Omega\subset\R^n$ which is continuously differentiable to order $r$ for some integer $r\ge 0$.
Multivariate splines play an important role in many areas such as finite elements, computer-aided design, and data fitting~\cite{LaiSchumaker,Isogeometric}.  In these applications it is important to construct a basis, often with prescribed properties, for splines of bounded total degree.  A more basic task which aids in the construction of a basis is simply to compute the dimension of the space of multivariate splines of bounded degree on a fixed partition.  We write $\spl^r_d(\Delta)$ for the vector space of piecewise polynomial functions of degree at most $d$ on the partition $\Delta$ which are continuously differentiable of order $r$.

A formula for the dimension of $\spl^1_d(\Delta)$, where $\Delta$ is a planar triangulation, was first proposed by Strang~\cite{Strang} and proved for $d\ge 2$ by Billera~\cite{Homology}.  Subsequently the problem of computing the dimension of planar splines on triangulations has received considerable attention using a wide variety of techniques, see~\cite{SchumakerU,AS4r,AS3r,SuperSpline,WhiteleyComb,WhiteleyM,Homology,DimSeries,LCoho,MinReg}.  Ibrahim and Schumaker show in~\cite{SuperSpline} that the dimension of $\spl^r_d(\Delta)$, for $\Delta$ a planar triangulation and $d\ge 3r+2$, is given by a quadratic polynomial in $d$ whose coefficients are determined from simple data of the triangulation.  An important feature of planar splines is that the formula which gives the dimension of the spline space $\spl^r_d(\Delta)$ for $d\ge 3r+2$ is a lower bound for \textit{any} degree $d\ge 0$~\cite{SchumakerLower}.

In this paper we focus on splines over the union of tetrahedra all of which meet at a common vertex.  We call such a configuration a \textit{star of a vertex} (these are sometimes called \textit{cells} in the approximation theory literature~\cite{LaiSchumaker,Jimmy}).  If $\Delta$ is the star of a vertex, every spline can be written as a sum of \textit{homogeneous} splines; a homogeneous spline of degree $d$ is one which restricts to a homogeneous polynomial of degree $d$ on each tetrahedron.  We denote by $\hspl^r_d(\Delta)$ the vector space of homogeneous splines of degree $d$ in $\spl^r_d(\Delta)$.  Understanding homogeneous splines on vertex stars is crucial to computing the dimension of trivariate splines on tetrahedral complexes (see~\cite{ASWTet}) -- whose behavior even in large degree is a major open problem in numerical analysis.  We apply our present results on vertex stars to tetrahedral splines of large degree in a forthcoming paper.


In~\cite{ANS96}, Alfeld, Neamtu, and Schumaker derive formulas for the dimension of the space of homogeneous splines on vertex stars of degree $d\ge 3r+2$.  A crucial difference from the planar case is that when $d<3r+2$ these formulas may not even be a lower bound on the dimension of the space of homogeneous splines.  To explain this we differentiate between two types of vertex stars.  If the common vertex at which all tetrahedra meet is completely surrounded by tetrahedra (so that it is the unique interior vertex), then we call the vertex star a \textit{closed} vertex star.  Otherwise we call the vertex star an \textit{open} vertex star.  In Equations~15 and~16 of~\cite{ANS96}, Alfeld, Neamtu, and Schumaker define functions (in terms of simple geometric and combinatorial data of $\Delta$) which we denote by $\LBcs(\Delta,d,r)$~\eqref{eq:LBclosedstar} and $\LBos(\Delta,d,r)$~\eqref{eq:LBopenstar}, respectively.  In~\cite[Theorem~3]{ANS96}, it is shown that $\dim \hspl^r_d(\Delta)=\LBcs(\Delta,d,r)$ for $d\ge 3r+2$ if $\Delta$ is a closed vertex star and that $\dim \hspl^r_d(\Delta)=\LBos(\Delta,d,r)$ for $d\ge 3r+2$ if $\Delta$ is an open vertex star.  

\begin{remark}\label{rem:BoundCaveat}
	We have been a little imprecise -- the formulas we denote as $\LBcs(\Delta,d,r)$~\eqref{eq:LBclosedstar} and $\LBos(\Delta,d,r)$~\eqref{eq:LBopenstar} agree with Equations~15 and~16 in~\cite{ANS96}, respectively, for `most' choices of vertex coordinates of $\Delta$ but may disagree for certain choices of vertex coordinates.  See Remark~\ref{rem:EquivalenceOfFormulas}.
\end{remark}

It is straightforward to show that $\LBos(\Delta,d,r)\le \dim \hspl^r_d(\Delta)$ for all $d\ge 0$ if $\Delta$ is an open vertex star.  On the other hand, if $\Delta$ is a closed vertex star it is quite delicate to determine the degrees $d$ for which $\LBcs(\Delta,d,r)\le \hspl^r_d(\Delta)$; see~\cite{Jimmy} where a lower bound is established for homogeneous $C^2$ splines on vertex stars.  Our main result is a simple bound on the degrees $d$ for which $\LBcs(\Delta,d,r)$ is a lower bound on $\dim \hspl^r_d(\Delta)$.

\begin{theorem}[Lower bound for splines on vertex stars]\label{thm:LBGenericClosedVertexStars}
	If $\Delta$ is a closed vertex star with interior vertex $\gamma$ and $f_1^\circ$ interior edges, put
	\begin{equation}\label{eq:Dgamma}
	D_\gamma:=\left\lbrace
	\begin{array}{ll}
	2r & f^\circ_1=4\\
	\lfloor (5r+2)/3 \rfloor & f_1^\circ=5\\
	\lfloor (3r+1)/2 \rfloor & f_1^\circ\ge 6
	\end{array}.
	\right.
	\end{equation}
	If $d>D_\gamma$ then $\dim \hspl^r_d(\Delta)\ge \max\left\{\binom{d+2}{2},\LBcs(\Delta,d,r)\right\}$ and
	\[
	\dim \spl^r_d(\Delta)\ge \binom{D_\gamma+3}{3}+\sum\limits_{i=D_{\gamma+1}}^{d} \max\left\{\binom{i+2}{2},\LBcs(\Delta,i,r)\right\}.
	\]
\end{theorem}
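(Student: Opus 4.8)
\emph{Reduction.} Because $\Delta$ is a vertex star, $\spl^r_d(\Delta)=\bigoplus_{i=0}^{d}\hspl^r_i(\Delta)$, so $\dim\spl^r_d(\Delta)=\sum_{i=0}^{d}\dim\hspl^r_i(\Delta)$; since the restriction of any homogeneous degree--$i$ polynomial is a homogeneous spline we have $\dim\hspl^r_i(\Delta)\ge\binom{i+2}{2}$ for every $i$, and $\sum_{i=0}^{D_\gamma}\binom{i+2}{2}=\binom{D_\gamma+3}{3}$. Thus the second displayed inequality follows term by term from the first, and the $\binom{d+2}{2}$ inside each maximum is just this trivial polynomial bound; so it suffices to prove
\[
\dim\hspl^r_d(\Delta)\ \ge\ \LBcs(\Delta,d,r)\qquad\text{for all }d>D_\gamma.
\]

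\emph{An algebraic model, via apolarity.} Place $\gamma$ at the origin, so each interior face $\tau$ of $\Delta$ spans a plane $V(\ell_\tau)$ with $\ell_\tau$ a linear form, and $C^r$ continuity across $\tau$ forces the two adjacent polynomials to differ by a multiple of $\ell_\tau^{\,r+1}$. Running the Billera--Schenck--Stillman chain complex of $\Delta$ and taking Euler characteristics expresses $\dim\hspl^r_d(\Delta)$ as a combinatorial main term plus graded dimensions of the ideal $J=\langle\ell_\tau^{\,r+1}:\tau\ \text{an interior face}\rangle$ and of the homology of that complex; the formula $\LBcs(\Delta,d,r)$ is precisely what results from replacing each such graded dimension by the value of its Hilbert polynomial, which is why $\dim\hspl^r_d(\Delta)=\LBcs(\Delta,d,r)$ holds for $d\ge 3r+2$ (see~\cite[Theorem~3]{ANS96}) and why, in general,
\[
\dim\hspl^r_d(\Delta)-\LBcs(\Delta,d,r)=\sum_{j}\bigl(\dim[M_j]_{e_j(d)}-\HP_{M_j}(e_j(d))\bigr)
\]
for certain graded modules $M_j$ built from $J$ and explicit degree shifts $e_j(d)$. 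Dualising through Macaulay's inverse system and using the apolarity dictionary for ideals generated by powers of linear forms rewrites each deviation $\dim[M_j]_e-\HP_{M_j}(e)$ in terms of the Hilbert function of a fat--point scheme supported on the set $Z$ of points of $\Proj^2$ dual to the interior faces, with multiplicities governed by $r$. Each deviation is nonnegative, and it vanishes once $e$ passes the Castelnuovo--Mumford regularity of the relevant fat points on $Z$; so the theorem reduces to a regularity bound for fat points supported on $Z$, to be compared against $D_\gamma$.

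\emph{The linear--programming bound on the Waldschmidt constant.} The configuration $Z$ is highly non-generic: each interior face of $\Delta$ contains exactly two interior edges, so the $3f_1^\circ-6$ points of $Z$ lie on the arrangement of the $f_1^\circ$ lines dual to the interior edges, each point on exactly two of them -- this is the incidence pattern of the edges and vertices of the link of $\gamma$, a triangulated $2$--sphere on $f_1^\circ$ vertices. Restricting a form that would witness a nonzero deviation to each of these $f_1^\circ$ lines, and tracking the multiplicity it can absorb along a line it contains, converts the regularity bound into the linear program $\LP$ whose optimum $\UB$ controls the Waldschmidt constant $\walpha(Z)$ through products of the dual lines and a residual plane curve. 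For $f_1^\circ=4$ and $f_1^\circ=5$ the link is combinatorially unique (the boundary of a tetrahedron, respectively the triangular bipyramid), so $\LP$ is solved exactly; for $f_1^\circ\ge6$ one bounds its optimum uniformly over all combinatorial types of link. In each case the conclusion is that once $d>D_\gamma$, with $D_\gamma$ as in~\eqref{eq:Dgamma}, every deviation $\dim[M_j]_{e_j(d)}-\HP_{M_j}(e_j(d))$ is nonnegative, whence $\dim\hspl^r_d(\Delta)\ge\LBcs(\Delta,d,r)$, as needed.

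\emph{Main obstacle.} The hardest step is the last one. The bound on the fat--point schemes on $Z$ must be uniform in $r$ and valid for \emph{every} triangulated $2$--sphere, including degenerate links with high-valence vertices (hence long collinear subsets of $Z$), yet sharp enough to produce $2r$ when $f_1^\circ=4$ and $\lfloor(5r+2)/3\rfloor$ when $f_1^\circ=5$ while settling for $\lfloor(3r+1)/2\rfloor$ in general. A naive B\'ezout count on the dual lines breaks down exactly when the deviation-witnessing form shares a component with one of them, which is why the estimate has to be organised as the linear program $\LP$; carrying out that optimisation and extracting the three regimes of~\eqref{eq:Dgamma} is the real work.
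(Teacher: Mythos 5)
Your outline tracks the paper's strategy closely: decompose $\spl^r_d(\Delta)$ into homogeneous pieces, use the Billera--Schenck--Stillman complex to express $\dim\hspl^r_d(\Delta)$ as an Euler characteristic plus homology, and reduce the comparison with $\LBcs$ to a statement about the Hilbert function of the vertex ideal $\tJ(\gamma)$, attacked by apolarity and a Waldschmidt constant bound on the dual points in $\Proj^2$. However, there is a genuine and decisive gap that you yourself flag under \emph{Main obstacle}: you never actually prove the Waldschmidt constant bound, and that bound \emph{is} the theorem. The entire content of the three cases of $D_\gamma$ comes from proving $\walpha(\tI_\X)\ge 2$, $\tfrac{5}{2}$, and $3$ when $f^\circ_1=4$, $5$, and $\ge 6$ respectively (Proposition~\ref{prop:GenericWaldschmidt} in the paper), which feeds into Proposition~\ref{prop:regularitybound} to give $\tJ(\gamma)_d=\tS_d$ for $d>D_\gamma$. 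The paper carries this out not by a linear program but by the explicit full reduction vectors of Cooper--Harbourne--Teitler (\cite[Corollary~2.1.5]{CHT11} and Corollary~\ref{cor:predv}): one cyclically reduces along the $f^\circ_1$ dual lines, using that each point $P_\sigma$ lies on exactly two such lines and each line carries $n_i\ge 3$ points, and then takes a limit. Without that computation the numerical thresholds in~\eqref{eq:Dgamma} are not established, and neither is the claimed inequality.

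Two smaller issues in the write-up. First, your description of $\LBcs$ as ``replacing each such graded dimension by the value of its Hilbert polynomial'' is inaccurate in a way that affects the sign bookkeeping: for generic $\Delta$ the edge-ideal contributions in $\LBcs$ are the \emph{exact} Hilbert functions $\dim\tJ(\tau)_d$ (Proposition~\ref{prop:DimensionsCountsForStars}, with $t_\tau=\min\{n_\tau,r+2\}$ and the truncation convention on binomials), so there is exactly one ``deviation'' in play, namely $\binom{d+2}{2}-\dim\tJ(\gamma)_d\ge 0$, which enters $\dim\hspl^r_d(\Delta)-\LBcs(\Delta,d,r)=\dim H_2(\calR/\calJ)_d-\bigl(\binom{d+2}{2}-\dim\tJ(\gamma)_d\bigr)$ with a \emph{minus} sign; the positivity of $H_2$ is useless until the $\tJ(\gamma)$ term is forced to vanish, which is precisely what the Waldschmidt bound accomplishes. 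Your ``every deviation is nonnegative'' framing conceals this, and it is the reason the theorem needs a degree restriction at all. Second, the reduction to generic vertex positions is silently omitted: Proposition~\ref{prop:EulerCharBounds} identifies $\LBcs$ with the Euler characteristic only for generic $\Delta$, and the non-generic case requires the semicontinuity argument of Lemma~\ref{lem:Generic} ($\dim\hspl^r_d(\Delta)\ge\dim\hspl^r_d(\Delta')$ under small perturbations). The setup and reduction paragraphs are otherwise correct and match the paper.
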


The failure of $\LBcs(\Delta,d,r)$ to be a lower bound for $\dim \hspl^r_d(\Delta)$ in low degree is elucidated by homological techniques of Billera~\cite{Homology} as refined by Schenck and Stillman~\cite{LCoho}.  
More precisely, it follows from these techniques (in particular the Billera-Schenck-Stillman chain complex) that
\begin{equation}\label{eq:LBcsAndEulerChar}
\LBcs(\Delta,d,r)-\binom{d+2}{2}+\dim \tJ(\gamma)_d\le \dim \hspl^r_d(\Delta),
\end{equation}
where $\tJ(\gamma)$ is an ideal generated by powers of linear forms attached to the interior vertex $\gamma$ (see Proposition~\ref{prop:EulerCharBounds}).  Iarrabino showed that, via apolarity, $\dim \tJ(\gamma)_d$ can be computed from the Hilbert function of a so-called \textit{ideal of fat points} in $\mathbb{P}^2$~\cite{Iarrobino}.  The Hilbert function of an ideal of fat points in $\mathbb{P}^2$ is the subject of much research (and a major open conjecture) in algebraic geometry~\cite[Section~5]{S16}.  Fortunately, we need relatively little information about the Hilbert function of this ideal of fat points to establish Theorem~\ref{thm:LBGenericClosedVertexStars} -- a sufficiently good lower bound on the so-called Waldschmidt constant~\cite{Waldschmidt,BH102} of the dual set of points is enough to establish that $\dim \tJ(\gamma)_d=\binom{d+2}{2}$ for $d>D_\gamma$.  Evidently the inequality~\eqref{eq:LBcsAndEulerChar} then implies $\LBcs(\Delta,d,r)\le \dim\hspl^r_d(\Delta)$ if $d>D_\gamma$.

Next we turn to the question of small degree, namely when $d\le D_\gamma$.  If we use the inequality~\eqref{eq:LBcsAndEulerChar}, finding $\dim \tJ(\gamma)_d$ entails some difficult fat point computations.
However,~\eqref{eq:LBcsAndEulerChar} is often not the best possible lower bound for $\dim \hspl^r_d(\Delta)$ in small degree.  In fact, it is often easier to analyze $\hspl^r_d(\Delta)$ directly in small degree, bypassing the difficulty of computing $\dim \tJ(\gamma)_d$ entirely.  Whiteley~\cite{WhiteleyComb} completed just such an analysis for generic planar triangulations; we prove the following variation on his result for closed vertex stars.

\begin{theorem}[Low degree splines on generic closed vertex stars]\label{thm:WhitelyGenericLowDegree}
	If $\Delta$ is a generic closed vertex star with interior vertex $\gamma$ then $\dim \hspl^r_d(\Delta)=\binom{d+2}{2}$ for $d\le D_\gamma$.
\end{theorem}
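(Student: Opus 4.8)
The plan is to show that for a generic closed vertex star $\Delta$ with interior vertex $\gamma$, every homogeneous spline of degree $d\le D_\gamma$ is a global polynomial, so that $\dim\hspl^r_d(\Delta)=\binom{d+2}{2}$. The lower bound $\dim\hspl^r_d(\Delta)\ge\binom{d+2}{2}$ is automatic since global polynomials of degree $d$ are splines; the content is the matching upper bound, i.e.\ that no nontrivial spline exists in these low degrees. I would adapt Whiteley's ``locally supported spline'' / matrix-rank argument from \cite{WhiteleyComb} to the trivariate homogeneous setting: a nonzero homogeneous spline which is not a global polynomial would force a nontrivial syzygy-type relation among the powers $\ell_e^{r+1}$ of the linear forms $\ell_e$ cutting out the interior faces $e$ of $\Delta$, and one shows that for generic vertex coordinates no such relation can occur in degree $\le D_\gamma$.

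The key steps, in order, are as follows. First, set up the algebra: dualize via apolarity (following Iarrobino \cite{Iarrobino} as invoked in the excerpt) so that the relevant obstruction lives in the ideal $\tJ(\gamma)$ generated by the powers $\ell_e^{r+1}$, $e$ an interior edge through $\gamma$, inside the polynomial ring in three variables; the number of generators is $f_1^\circ$. Second, observe that $\dim\hspl^r_d(\Delta)=\binom{d+2}{2}$ is equivalent, via the Billera--Schenck--Stillman machinery and in particular via a careful reading of \eqref{eq:LBcsAndEulerChar} together with Proposition~\ref{prop:EulerCharBounds}, to the statement that the map expressing nontrivial splines vanishes in degree $d$; concretely, one reduces to showing a certain linear system determined by the $f_1^\circ$ linear forms has only the trivial solution. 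Third, and this is the heart, perform the genericity analysis: treat the coordinates of the boundary vertices of $\Delta$ as indeterminates and show that the relevant coefficient matrix has full rank for generic specialization, which by semicontinuity reduces to exhibiting one configuration where it does. The three cases $f_1^\circ=4$, $f_1^\circ=5$, $f_1^\circ\ge 6$ are handled by the three values of $D_\gamma$ in \eqref{eq:Dgamma}; the bound $\lfloor(3r+1)/2\rfloor$ for $f_1^\circ\ge 6$ should come from a dimension count comparing the degree-$d$ piece of the ambient ring, $\binom{d+2}{2}$, against what $f_1^\circ$ forms of degree $r+1$ can span, while the sharper small-$f_1^\circ$ bounds exploit that with only $4$ or $5$ interior edges the relevant points lie in special position (four or five general points in $\Proj^2$) forcing the Waldschmidt-type estimate to kick in only later.

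I expect the main obstacle to be the genericity step for the borderline cases $f_1^\circ=4$ and $f_1^\circ=5$: here one cannot simply invoke a generic-points Waldschmidt bound, because four or five points in $\Proj^2$ never behave generically enough, and one must instead analyze the actual combinatorial structure of a closed vertex star with few interior edges (its link is a triangulation of $S^2$ with few vertices, hence highly constrained — e.g.\ $f_1^\circ=4$ forces the boundary of a simplex, $f_1^\circ=5$ a bipyramid) and compute the Hilbert function of the corresponding fat point ideal exactly. A secondary subtlety is making precise the reduction in the second step so that the low-degree vanishing of $\hspl^r_d(\Delta)$ beyond global polynomials is genuinely equivalent to a rank statement and not merely implied by one; here I would lean on the exactness properties of the Billera--Schenck--Stillman complex for vertex stars, where the complex is short enough that the relevant homology can be pinned down completely.
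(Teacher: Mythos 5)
Your proposal has the right starting instinct (adapt Whiteley's direct rank argument) but then pivots into the wrong mechanism, and the middle two steps actually describe the proof of the \emph{other} main theorem in the paper.

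The Waldschmidt/apolarity computation for $\tJ(\gamma)$ is used to show $\dim\tJ(\gamma)_d=\binom{d+2}{2}$ for $d>D_\gamma$, which feeds into the \emph{lower} bound of Theorem~\ref{thm:LBGenericClosedVertexStars}. It cannot deliver the \emph{upper} bound $\dim\hspl^r_d(\Delta)\le\binom{d+2}{2}$ needed here, because the Euler-characteristic identity $\dim\hspl^r_d(\Delta)=\binom{d+2}{2}+\chi(\calJ,d)+\dim H_1(\calJ)_d$ has an uncontrolled homology term $\dim H_1(\calJ)_d$; knowing $\dim\tJ(\gamma)_d$ alone tells you nothing about it. The paper makes exactly this point right before the theorem statement: one must analyze $\hspl^r_d(\Delta)$ directly and ``bypass the difficulty of computing $\dim\tJ(\gamma)_d$ entirely.'' Equivalently, the quantity one must kill is $H_2(\calJ)_d=\ker(\partial_2)_d$ on the two-face level, not anything about $\tJ(\gamma)_d$.

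The paper's actual argument, which your proposal misses, is: (i) for $f_1^\circ\ge 6$, run Whiteley's vertex-splitting induction from \cite{WhiteleyComb} lifted to \emph{edge splitting} of vertex stars, with base case the Alfeld split of a tetrahedron --- so genericity is used per inductive step rather than via exhibiting a single full-rank configuration and invoking semicontinuity globally; (ii) for $f_1^\circ=4$ (the Alfeld split, where $D_\gamma=2r$), cite Schenck \cite{S14}; (iii) for $f_1^\circ=5$ (barycentric subdivision of the triangular bipyramid, where $D_\gamma=\lfloor(5r+2)/3\rfloor$), do the explicit computation of Proposition~\ref{prop:5Vert}, involving colon ideals, lex segment initial ideals, and an integer linear program. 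Your proposed explanation of the sharper thresholds for $f_1^\circ=4,5$ is also off in detail: $\tJ(\gamma)$ is generated by one power of a linear form per interior \emph{two-face}, so there are $f_2^\circ$ of them (six for the Alfeld split, nine for the bipyramid), not $f_1^\circ=4$ or $5$; and in any case the sharper thresholds are not obtained via a Waldschmidt estimate on those points but by the two ad hoc arguments just described.
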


\begin{remark}
	See Definition~\ref{def:GenericSimplicialComplex} for the meaning of a generic vertex star.
\end{remark}

Theorem~\ref{thm:WhitelyGenericLowDegree} shows that, at least for generic vertex positions, the best lower bound in degrees $d\le D_\gamma$ is also the simplest.  Thus, if vertex positions are generic, one cannot obtain a `better' lower bound by computing $\dim \tJ(\gamma)_d$ for $d\le D_\gamma$.  We do not know when it is possible to bypass the computation of $\dim\tJ(\gamma)_d$ in low degree for \textit{non-generic} vertex positions, although we show the general strategy for one non-generic configuration in Section~\ref{ss:nongen}.  Our result suggests that, just as in the planar case, the main difficulty in computing $\dim \hspl^r_d(\Delta)$ in low degree is understaning the non-trivial homology module of the Billera-Schenck-Stillman chain complex.

The paper is organized as follows.  In Section~\ref{sec:Background} we set up notation and briefly describe the homological machinery from~\cite{Homology,LCoho}.  In Section~\ref{sec:duality} we use apolarity and the Waldschmidt constant to show that $\dim \tJ(\gamma)_d=\binom{d+2}{2}$ if $d>D_\gamma$ (see the above discussion).  In Section~\ref{sec:bounds_cells} we prove Theorem~\ref{thm:LBGenericClosedVertexStars}, and in Section~\ref{sec:GenericLowDegree} we prove Theorem~\ref{thm:WhitelyGenericLowDegree}.  Section~\ref{sec:Examples} is devoted to illustrating our bounds in some examples and Section~\ref{sec:ConcludingRemarks} contains concluding remarks.

\section{Background and Homological Methods}\label{sec:Background}
In this section we review the necessary results from \cite{Homology} and \cite{LCoho}.  We denote by $\Delta$ a simplicial complex embedded in $\R^n$ (see~\cite{Zie} for basics on simplicial complexes).  If $n=2$ we will refer to $\Delta$ as a \emph{triangulation}, and as a \emph{tetrahedral complex} if $n=3$.
We denote by $\Delta_i^\circ$ the set of interior faces of $\Delta$ of dimension $i$, and by $f_i^\circ$ the number of such faces for $i=0,1,\ldots,n$.  If $\beta\in\Delta_i$ we call $\beta$ an $i$-face.  By an abuse of notation, we will identify $\Delta$ with its underlying space $\bigcup\limits_{\beta\in\Delta} \beta\subset\R^n$.

Recall that a simplicial complex $\Delta$ is said to be \textit{pure} if all maximal simplices have the same dimension. A pure $n$-dimensional simplicial complex $\Delta$ is \textit{hereditary} if, whenever two maximal simplices $\iota,\iota'\in\Delta_n$ intersect in a vertex $\gamma\in\Delta_0$, then there is a sequence $\iota=\iota_1,\iota_2,\ldots,\iota_k=\iota'$ of $n$-dimensional simplices satisfying that $\gamma\in\iota_i$ for $i=1,\ldots,k$ and $\iota_{i+1}\cap\iota_{i}\in\Delta_{n-1}$ for $i=1,\ldots,k-1$.

If $\Delta$ is a pure $n$-dimensional simplicial complex all of whose $n$-dimensional simplices share a common vertex $\gamma$ then we call $\Delta$ the \textit{star of a vertex} or a \textit{vertex star}. Without loss of generality, we assume that $\gamma$ is at the origin.  If $\gamma$ is an \textit{interior} vertex of $\Delta$ then we call $\Delta$ a closed vertex star; if $\gamma$ is on the boundary of $\Delta$ then we call $\Delta$ an open vertex star.  The \textit{link} of a pure $n$-dimensional vertex star in which all $n$-dimensional simplices share the vertex $\gamma$ is the set of all simplices of $\Delta$ which do not contain $\gamma$ (this has dimension $n-1$).  Throughout this article, whenever we refer to a simplicial complex $\Delta$, we will mean a pure, hereditary, $3$-dimensional vertex star whose link is simply connected.  We call these \emph{tetrahedral vertex stars}, \emph{simplicial vertex stars}, or simply \emph{vertex stars}.


We write $\tS=\R[x,y,z]$ for the polynomial ring in three variables, $\tS_d$ for the vector space of homogeneous polynomials of degree $d$, and $\tS_{\le d}$ for the vectors space of polynomials of total degree at most $d$.  For a given integer $r\geq 0$, we denote by $C^r(\Delta)$ the set of all functions $F\colon \Delta\to\R$ which are continuously differentiable of order $r$.  
\begin{definition}\label{def:SplineSpaces}
	Let $\Delta\subset\R^3$ be a tetrahedral vertex star.  The space $\spl^r(\Delta)$ of splines on $\Delta$ is the piecewise polynomial functions on $\Delta$ that are continuously differentiable up to order $r$ on $\Delta$ i.e., 
	\begin{align*}
	\spl^r(\Delta) & =  \{F\in C^r(\Delta)\colon F|_\iota\in \tS\mbox{ for all }\iota\in\Delta_3 \}.\\[5 pt]
	\intertext{If we consider polynomials of degree at most $d$, the space will be denoted by $\spl^r_d(\Delta)$, namely}
	\spl^r_d(\Delta) & = \{F\in C^r(\Delta) \colon F|_\iota\in \tS_{\le d} \mbox{ for all }\iota\in\Delta_3 \}.\\[5 pt]
	\intertext{Similarly, the space $\hspl^r_d(\Delta)$ of splines whose polynomial pieces are of degree exactly $d$ is defined as }
	\hspl^r_d(\Delta) & =  \{F\in C^r(\Delta)\colon F|_\iota\in \tS_d \mbox{ for all }\iota\in\Delta_3\}.
	\end{align*}
\end{definition}

The space $\spl^r(\Delta)$ is itself a ring, and $\tS$ includes naturally into $\spl^r(\Delta)$ as global polynomials.  In this way $\spl^r(\Delta)$ is both an $\tS$-module and a $\tS$-algebra.  We will be concerned exclusively with the structure of $\spl^r(\Delta)$ as an $\R$-vector space; however we may at times refer to the $\tS$-module structure of $\spl^r(\Delta)$.  In particular, if $\Delta$ is the star of a vertex, then it is known that
\begin{equation}\label{eq:StarVertexHomogDecomp}
\spl^r(\Delta)\cong \bigoplus\limits_{i\ge 0} \hspl^r_i(\Delta) \qquad\mbox{and}\qquad \spl^r_d(\Delta)\cong \bigoplus\limits_{i=0}^d \hspl^r_i(\Delta),
\end{equation}
where the isomorphism is as $\R$-vector spaces.  If $F\in\hspl^r_d(\Delta)$ and $G\in\tS_j$, notice that $FG\in\hspl^r_{d+j}(\Delta)$.  This means that $\spl^r_d(\Delta)$ has the structure of a \textit{graded} $\tS$-module.  

\begin{remark}
	If $\Delta$ has more than one interior vertex, there is a coning construction under which~\eqref{eq:StarVertexHomogDecomp} will still be valid.  As we focus on the case of vertex stars, we will not need this.
\end{remark}

\begin{definition}\label{def:IdealsOfFaces}
	Suppose $\Delta\subset\R^3$ is an tetrahedral vertex star.  If $\sigma\in\Delta_{2}$, let $\ell_\sigma$ be a choice of linear form vanishing on $\sigma$.  We define $\tJ(\sigma)=\langle \ell_\sigma^{r+1}\rangle$, the ideal generated by $\ell_\sigma^{r+1}$.  For any face $\beta\in\Delta_i$ where $i=0,1$ we define
	\[
	\tJ(\beta):=\sum_{\sigma\supseteq\beta} \tJ(\sigma)=\langle \ell^{r+1}_\sigma: \beta\subseteq \sigma\rangle.
	\]
	If $\beta\in\Delta_3$ we define $\tJ(\beta)=0$.
\end{definition}

\begin{proposition}{\cite[Proposition~1.2]{DimSeries}}\label{prop:AlgebraicCriterion}
	If $\Delta$ is hereditary then $F\in\spl^r(\Delta)$ if and only if
	\begin{equation*}
	F|_{\iota}-F|_{\iota'}\in \tJ(\sigma) \mbox{ for every }\iota, \iota'\in \Delta_3 \mbox{ satisfying } \iota\cap \iota'=\sigma\in\Delta_{2}.
	\end{equation*}
\end{proposition}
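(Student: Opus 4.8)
The plan is to prove the two implications separately; both rest on one elementary fact. For $g\in\tS$ and a linear form $\ell$, the partial derivatives $\partial^\alpha g$ with $|\alpha|\le r$ all vanish on the hyperplane $V(\ell)$ if and only if $\ell^{r+1}\mid g$. Indeed, if $g=\ell^{r+1}h$ then, because $\ell$ is linear, $\partial^\beta(\ell^{r+1})$ is a scalar multiple of $\ell^{\,r+1-|\beta|}$ for $|\beta|\le r$ and hence divisible by $\ell$, so by the Leibniz rule every $\partial^\alpha g$ with $|\alpha|\le r$ is divisible by $\ell$ and vanishes on $V(\ell)$. Conversely, after a linear change of coordinates making $\ell$ a variable $u$, write $g=\sum_{i\ge 0}u^i g_i$ with $g_i$ free of $u$; then $\partial_u^i g|_{u=0}=i!\,g_i$ vanishes for $i\le r$, so $g_0=\dots=g_r=0$ and $u^{r+1}\mid g$.

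For the ``only if'' part of the proposition, suppose $F\in\spl^r(\Delta)$ and $\iota\cap\iota'=\sigma\in\Delta_2$. At a point $p$ in the relative interior of $\sigma$, the only faces of $\Delta$ whose closure contains $p$ are $\sigma$, its faces, and the two tetrahedra $\iota,\iota'$; thus a neighbourhood of $p$ in $\Delta$ is an ordinary neighbourhood of $p$ in $\R^3$, cut by the plane $V(\ell_\sigma)$ into a part where $F=F|_\iota$ and a part where $F=F|_{\iota'}$. Since $F$ is $C^r$ near $p$, the polynomials $F|_\iota$ and $F|_{\iota'}$ have all partials of order $\le r$ agreeing at $p$; letting $p$ range over the relative interior of $\sigma$, which is Zariski dense in $V(\ell_\sigma)$, the difference $g:=F|_\iota-F|_{\iota'}$ has all partials of order $\le r$ vanishing on $V(\ell_\sigma)$. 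By the elementary fact $\ell_\sigma^{r+1}\mid g$, i.e.\ $g\in\tJ(\sigma)$.

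For the ``if'' part, suppose $F|_\iota-F|_{\iota'}\in\tJ(\sigma)$ for every adjacent pair $\iota\cap\iota'=\sigma\in\Delta_2$. The elementary fact (applied to $g=F|_\iota-F|_{\iota'}$) gives that $\partial^\alpha(F|_\iota)$ and $\partial^\alpha(F|_{\iota'})$ agree on $\sigma$ for all $|\alpha|\le r$ and every adjacent pair. I will use the characterization that a piecewise polynomial $F$ lies in $\spl^r(\Delta)$ exactly when $\partial^\alpha(F|_\iota)$ and $\partial^\alpha(F|_{\iota'})$ agree on $\iota\cap\iota'$ for \emph{all} $\iota,\iota'\in\Delta_3$ and all $|\alpha|\le r$ --- equivalently, when $F$ extends to a $C^r$ function on a neighbourhood of $|\Delta|$ in $\R^3$. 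So it remains to promote agreement across shared facets to agreement across an arbitrary shared face $\beta=\iota\cap\iota'$. Heredity supplies exactly this: any two tetrahedra containing an interior face $\beta$ are joined by a gallery $\iota=\iota_1,\dots,\iota_k=\iota'$ of tetrahedra, each containing $\beta$, with consecutive tetrahedra meeting in a facet that also contains $\beta$; chaining the facet-wise equalities of $\partial^\alpha$ along the gallery yields $\partial^\alpha(F|_\iota)=\partial^\alpha(F|_{\iota'})$ on $\beta$ for every $|\alpha|\le r$.

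The main obstacle is the combinatorial statement just invoked: the hypothesis of heredity is only the $\dim\beta=0$ instance of it, and one must bootstrap to arbitrary $\beta$. For the $3$-dimensional complexes considered here this is elementary, by a standard induction on dimension (simultaneously showing that links of pure hereditary complexes are again pure and hereditary): given $\beta$ and a vertex $\gamma\in\beta$, pass to the face $\beta\setminus\gamma$ in $\mathrm{lk}_\Delta(\gamma)$, produce the gallery there by the inductive hypothesis, and cone it back with $\gamma$; the base cases --- a shared facet, joined by a gallery of length two, and a shared vertex, which is heredity itself --- are immediate. The other ingredient, the identification of ``$C^r$ on $\Delta$'' with the derivative-matching condition, is a Whitney-type extension statement that is routine for simplicial complexes.
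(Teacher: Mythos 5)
The paper does not prove this statement; it is quoted directly from Billera and Rose (their Proposition~1.2), so there is no internal proof to compare against. Judged on its own, your argument has the right skeleton: the elementary divisibility fact, localization at the relative interior of $\sigma$ for the forward direction, and reduction of the converse to pointwise agreement of $\partial^\alpha$ across shared faces followed by a Whitney-type gluing. The forward direction is fine.

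The converse has a genuine gap precisely where you flag ``the main obstacle.'' Chaining the facet-wise equalities $\partial^\alpha(F|_{\iota_i})=\partial^\alpha(F|_{\iota_{i+1}})$ along a gallery only gives agreement on the intersection of the intermediate facets; to conclude agreement on $\beta=\iota\cap\iota'$ you need a gallery whose tetrahedra, and hence whose connecting facets, all contain $\beta$. The paper's definition of hereditary supplies such galleries only when $\beta$ is a vertex. Your proposed bootstrap -- induction on dimension via links, asserting that links of pure hereditary complexes are again hereditary -- is circular as written: a vertex $\gamma'$ of the link of $\gamma$ corresponds to the edge $\{\gamma,\gamma'\}$ of $\Delta$, and the assertion that the link of $\gamma$ is hereditary at $\gamma'$ is \emph{exactly} the statement that the star of that edge in $\Delta$ is strongly connected, which is the thing to be proved, not a formal consequence of heredity-at-vertices for abstract pure complexes. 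Billera and Rose sidestep this by \emph{defining} hereditary to mean that the star of \emph{every} face (not merely every vertex) is strongly connected, which is precisely what your gallery argument needs; you should either invoke that stronger definition explicitly, or argue geometrically for the complexes actually at hand (e.g.\ for a vertex star, the link of an interior edge is a closed fan and hence automatically strongly connected, and the simply-connected-link hypothesis controls the boundary edges). A smaller point: the equivalence of ``$C^r$ on $\Delta$'' with pointwise agreement of all $\partial^\alpha$, $|\alpha|\le r$, across all pairwise intersections is correct for piecewise polynomials, but ``routine Whitney'' deserves a line of justification; it is cleaner to show directly that $G_\alpha:=\partial^\alpha(F|_\iota)$ is a well-defined continuous function on $\Delta$ and then verify $\partial^\alpha F=G_\alpha$ by integrating along segments.
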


We define a chain complex introduced by Billera~\cite{Homology} and refined by Schenck and Stillman~\cite{LCoho}.  We refer to~\cite{Hatcher} for undefined terms from algebraic topology.  We denote the simplicial chain complex of $\Delta$ relative to its boundary $\partial\Delta$ with coefficients in $\tS$ by $\calR$:
\[
\calR \colon\quad  0\longrightarrow \tS^{f_3}\xrightarrow{\partial_3} \tS^{f^\circ_2} \xrightarrow{\partial_2} \tS^{f^\circ_1} \xrightarrow{\partial_1} \tS^{f^\circ_0} \longrightarrow 0.
\]
The ideals $\tJ(\beta)$ fit together to make a sub-chain complex of $\calR$ (the differential is the restriction of the differential of $\calR$):
\[
\calJ\colon \quad  0\longrightarrow  \bigoplus_{\sigma\in\Delta^\circ_{2}} \tJ(\sigma)\xrightarrow{\partial_{2}} \bigoplus_{\tau\in\Delta^\circ_1} \tJ(\tau) \xrightarrow{\partial_1} \bigoplus_{\gamma\in\Delta^\circ_0} \tJ(\gamma) \longrightarrow 0.
\]
The Billera-Schenck-Stillman chain complex is the quotient of $\calR$ by $\calJ$, namely
\[
\calR/\calJ \colon \quad 0\longrightarrow \bigoplus_{\iota\in\Delta_{3}}  \tS\xrightarrow{\overline{\partial}_3} \bigoplus_{\sigma\in\Delta_{2}^\circ} \frac{\tS}{\tJ(\sigma)}\xrightarrow{\overline{\partial}_{2}} \bigoplus_{\tau\in\Delta_1^\circ}\frac{\tS}{\tJ(\tau)}\xrightarrow{\overline{\partial}_1} \bigoplus_{\gamma\in\Delta_0^\circ}\frac{\tS}{\tJ(\gamma)}\longrightarrow 0\ .
\]
These three chain complexes fit into the evident short exact sequence of chain complexes
\[
0\longrightarrow \calJ \longrightarrow \calR \longrightarrow \calR/\calJ \longrightarrow 0.
\]
As is standard notation, $\calR_i,\calJ_i,$ and $(\calR/\calJ)_i$ refer to the modules in the chain complex at homological position $i$.  For instance, $\calR_0=\tS^{f^\circ_0},\calR_1=\tS^{f^\circ_1}$, and so on.  We summarize some well-known properties of $\calR/\calJ$ (see~\cite{LCoho,Spect}).

\begin{proposition}\label{prop:FrequentlyUsedIsomorphisms}
	If $\Delta\subset\R^3$ is a tetrahedral vertex star whose link is simply connected, then $\spl^r(\Delta)\cong H_3(\calR/\calJ)\cong \tS\oplus H_{2}(\calJ)$, $H_i(\calR/\calJ)\cong H_{i-1}(\calJ)$ for $i=1,2$, and $H_0(\calR/\calJ)=0$.
\end{proposition}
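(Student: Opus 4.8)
The plan is to read off all of the stated isomorphisms from the long exact sequence in homology attached to the short exact sequence $0\to\calJ\to\calR\to\calR/\calJ\to0$, after first (i) identifying the top homology of $\calR/\calJ$ with $\spl^r(\Delta)$ and (ii) computing the homology of $\calR$ itself. Steps (i) and (ii) are where the actual content lies; the rest is formal.

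For step (i): since $\calR/\calJ$ vanishes in homological degree $4$, we have $H_3(\calR/\calJ)=\ker\overline\partial_3$. Because $\Delta$ is embedded in $\R^3$, every interior $2$-face $\sigma$ is shared by exactly two tetrahedra $\iota,\iota'$, so $\overline\partial_3\colon\bigoplus_{\iota\in\Delta_3}\tS\to\bigoplus_{\sigma\in\Delta_2^\circ}\tS/\tJ(\sigma)$ sends $(F_\iota)_\iota$ to the tuple whose $\sigma$-coordinate is $\pm(F_\iota-F_{\iota'})\bmod\tJ(\sigma)$. Hence $(F_\iota)_\iota\in\ker\overline\partial_3$ exactly when $F_\iota-F_{\iota'}\in\tJ(\sigma)$ for every $\iota,\iota'$ with $\iota\cap\iota'=\sigma\in\Delta_2^\circ$; by Proposition~\ref{prop:AlgebraicCriterion} (using that $\Delta$ is hereditary, so that such a tuple also agrees along lower-dimensional intersections and therefore glues) this is precisely the condition that $(F_\iota)_\iota$ be the collection of restrictions of a spline, and since a spline is determined by these restrictions we obtain $\spl^r(\Delta)\cong H_3(\calR/\calJ)$.

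For step (ii): by construction $\calR$ is the simplicial chain complex of the pair $(\Delta,\partial\Delta)$ with coefficients in $\tS$, and since $\tS$ is a free, hence flat, $\R$-module, $H_i(\calR)\cong H_i(\Delta,\partial\Delta;\R)\otimes_\R\tS$. Now $\Delta$ is the cone with apex $\gamma$ over its link $L$, so $\Delta$ is contractible; moreover $L$ is connected (this follows from hereditariness, since every tetrahedron contains $\gamma$) and simply connected by assumption. From the long exact sequence of the pair and contractibility of $\Delta$ one gets $H_0(\Delta,\partial\Delta;\R)=0$ and $H_i(\Delta,\partial\Delta;\R)\cong\widetilde H_{i-1}(\partial\Delta;\R)$ for $i\ge1$; a short computation — treating the closed case, where $\partial\Delta=L$, and the open case, where $\partial\Delta$ is the mapping cone of the inclusion of the boundary of $L$ into $L$ — shows that in both cases $\widetilde H_i(\partial\Delta;\R)$ is $\R$ for $i=2$ and $0$ otherwise. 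Consequently $H_i(\calR)=0$ for $i\le2$ and $H_3(\calR)\cong\tS$, the latter being the diagonal copy of $\tS$ inside $\bigoplus_{\iota\in\Delta_3}\tS$, i.e.\ the submodule of global polynomials.

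Finally, substitute into the long exact sequence $\cdots\to H_i(\calJ)\to H_i(\calR)\to H_i(\calR/\calJ)\to H_{i-1}(\calJ)\to H_{i-1}(\calR)\to\cdots$. Since $\calJ$ is concentrated in homological degrees $0,1,2$ we have $H_3(\calJ)=0=H_{-1}(\calJ)$, and by step (ii) $H_i(\calR)=0$ for $i\le2$; reading off the sequence yields immediately $H_0(\calR/\calJ)=0$, the isomorphisms $H_i(\calR/\calJ)\cong H_{i-1}(\calJ)$ for $i=1,2$, and a short exact sequence $0\to\tS\to H_3(\calR/\calJ)\to H_2(\calJ)\to0$. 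To split this sequence, fix a top face $\iota_0\in\Delta_3$: the restriction map $F\mapsto F|_{\iota_0}$ from $\spl^r(\Delta)\cong H_3(\calR/\calJ)$ to $\tS$ composes with the inclusion $\tS\hookrightarrow\spl^r(\Delta)$ of global polynomials to give the identity on $\tS$, so the sequence splits and $H_3(\calR/\calJ)\cong\tS\oplus H_2(\calJ)$. The one genuinely delicate point is the topological input in step (ii) — verifying uniformly for open and closed vertex stars that $\partial\Delta$ has the reduced homology of $S^2$; the reason this holds is that an embedded pure three-dimensional vertex star with simply connected link is a ball whose boundary subcomplex is exactly $\partial\Delta$ (equivalently, each interior $2$-face meets precisely two tetrahedra, which is also what legitimises the description of $\overline\partial_3$ used in step (i)).
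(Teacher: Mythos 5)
The paper does not supply its own proof of this proposition---it is quoted as well-known and attributed to Schenck--Stillman \cite{LCoho} and Schenck \cite{Spect}---so there is no in-paper argument to compare against. Your reconstruction via the long exact sequence attached to $0\to\calJ\to\calR\to\calR/\calJ\to0$, with the key inputs being $H_3(\calR/\calJ)=\ker\overline\partial_3\cong\spl^r(\Delta)$ (Proposition~\ref{prop:AlgebraicCriterion}), the vanishing $H_i(\calR)=0$ for $i\le 2$ together with $H_3(\calR)\cong\tS$ computed from contractibility of $\Delta$ and the boundary $\partial\Delta$ having the reduced homology of $S^2$, and the splitting of $0\to\tS\to\spl^r(\Delta)\to H_2(\calJ)\to 0$ via restriction to a fixed tetrahedron, is correct and is essentially the standard argument that those references give.
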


The inclusion of $\tS$ into $\spl^r(\Delta)$ as globally polynomial corresponds (via the isomorphism in Proposition~\ref{prop:FrequentlyUsedIsomorphisms}) to the copy of $\tS$ in $\tS\oplus H_{2}(\calJ)$, while the map
\[
\bigoplus_{\sigma\in\Delta^\circ_{2}} \tJ(\sigma) \xrightarrow{\partial_{2}} \bigoplus_{\tau\in\Delta^\circ_{1}} \tJ(\beta)
\]
encodes the so-called \textit{smoothing cofactors}.  By Proposition~\ref{prop:FrequentlyUsedIsomorphisms}, the Billera-Schenck-Stillman chain complex $\calR/\calJ$ and the chain complex $\calJ$ contain essentially the same information.

We now put everything together to write the dimension of $\hspl^r_d(\Delta)$ in terms of the Billera-Schenck-Stillman chain complex.  If $\mathcal{C}=0\rightarrow C_n\rightarrow\cdots\rightarrow C_0\rightarrow 0$ is a chain complex of graded $\tS$-modules, we write $\chi(\mathcal{C},d)$ for the graded Euler-Poincar\'{e} characteristic of $\mathcal{C}$.  That is,
\[
\chi(\mathcal{C},d)=\sum_{i=0}^n(-1)^{n-i}\dim(C_i)_d.
\]

\begin{proposition}\label{prop:EulerCharacteristicAndDimension}
	If $\Delta$ is a tetrahedral vertex star then
	\begin{equation}\label{eq:celldimformula}
	\dim \hspl^r_d(\Delta)=\chi(\calR/\calJ,d)+\dim H_2(\calR/\calJ)_d=\dim \tS_d+\chi(\calJ,d)+\dim H_1(\calJ)_d.
	\end{equation}
	In particular, 
	\[
	\dim \hspl^r_d(\Delta)\ge\chi(\calR/\calJ,d)=\dim\tS_d+\chi(\calJ,d).
	\]
\end{proposition}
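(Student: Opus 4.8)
The plan is to deduce both equalities from the isomorphisms of Proposition~\ref{prop:FrequentlyUsedIsomorphisms} via the Euler--Poincar\'e principle. Fix $d$ and work throughout in the degree-$d$ graded piece, where every module occurring in $\calR$, $\calJ$, and $\calR/\calJ$ is a finite-dimensional $\R$-vector space. Since for a bounded complex of finite-dimensional vector spaces the alternating sum of the dimensions of the terms equals that of the homology, applying this to $\calR/\calJ$ (supported in homological degrees $0,\dots,3$) gives
\[
\chi(\calR/\calJ,d)=\sum_{i=0}^{3}(-1)^{3-i}\dim H_i(\calR/\calJ)_d .
\]
Now substitute the identifications $H_0(\calR/\calJ)=0$, $H_1(\calR/\calJ)\cong H_0(\calJ)$, $H_2(\calR/\calJ)\cong H_1(\calJ)$, and $H_3(\calR/\calJ)\cong\tS\oplus H_2(\calJ)$, whose degree-$d$ dimension equals both $\dim\tS_d+\dim H_2(\calJ)_d$ and, by the graded decomposition~\eqref{eq:StarVertexHomogDecomp}, $\dim\hspl^r_d(\Delta)$. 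Expanding $\chi(\calJ,d)=\dim H_0(\calJ)_d-\dim H_1(\calJ)_d+\dim H_2(\calJ)_d$ and matching signs, the displayed formula becomes simultaneously
\[
\chi(\calR/\calJ,d)=\dim\hspl^r_d(\Delta)-\dim H_2(\calR/\calJ)_d+\dim H_0(\calJ)_d
\quad\text{and}\quad
\chi(\calR/\calJ,d)=\dim\tS_d+\chi(\calJ,d)+\dim H_0(\calJ)_d .
\]

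The only substantive step is therefore the vanishing $H_0(\calJ)=0$. If $\Delta$ is an open vertex star this is immediate, since then $\Delta$ has no interior vertex and $\calJ_0=0$. If $\Delta$ is a closed vertex star with unique interior vertex $\gamma$, then $\calJ_0=\tJ(\gamma)$ and $H_0(\calJ)=\coker\big(\bigoplus_{\tau\in\Delta_1^\circ}\tJ(\tau)\xrightarrow{\partial_1}\tJ(\gamma)\big)$. Each interior edge $\tau$ has the form $[\gamma,v]$, so in the relative chain complex $\partial_1$ carries the $\tJ(\tau)$-summand (up to sign) onto $\tJ(\tau)$ regarded as a submodule of $\tJ(\gamma)$; hence $\im\partial_1=\sum_{\tau\in\Delta_1^\circ}\tJ(\tau)$. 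Every interior $2$-face $\sigma$ contains an interior edge $\tau$, and then $\tJ(\sigma)\subseteq\tJ(\tau)$, so this image contains $\sum_{\sigma\in\Delta_2^\circ}\tJ(\sigma)=\tJ(\gamma)$ (the last equality being Definition~\ref{def:IdealsOfFaces}, since the $2$-faces of $\Delta$ containing $\gamma$ are exactly the interior ones). As the reverse inclusion is obvious, $\partial_1$ is surjective and $H_0(\calJ)=0$.

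Feeding $H_0(\calJ)=0$ into the two displays yields the first equality $\dim\hspl^r_d(\Delta)=\chi(\calR/\calJ,d)+\dim H_2(\calR/\calJ)_d$ and the identity $\chi(\calR/\calJ,d)=\dim\tS_d+\chi(\calJ,d)$; combining the latter with $H_2(\calR/\calJ)\cong H_1(\calJ)$ gives the second equality, and dropping the nonnegative term $\dim H_2(\calR/\calJ)_d$ gives the final inequality. Thus the only real obstacle is the elementary vanishing $H_0(\calJ)=0$, everything else being bookkeeping with Euler characteristics and the already-established isomorphisms. (If one prefers not to lean on the full strength of Proposition~\ref{prop:FrequentlyUsedIsomorphisms}, the identity $\chi(\calR/\calJ,d)=\dim\tS_d+\chi(\calJ,d)$ also follows from additivity of the Euler characteristic along $0\to\calJ\to\calR\to\calR/\calJ\to 0$ together with $\chi(\calR,d)=\dim\tS_d$ --- the latter because $H_\bullet(\calR)=H_\bullet(\Delta,\partial\Delta;\tS)$ is free of rank one in top homological degree and vanishes otherwise --- or termwise from the sequences $0\to\tJ(\beta)\to\tS\to\tS/\tJ(\beta)\to 0$ and the identity $f_3-f_2^\circ+f_1^\circ-f_0^\circ=1$ for a closed vertex star.)
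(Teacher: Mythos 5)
Your proof is correct and takes essentially the same route as the paper's: establish $H_0(\calJ)=0$ (trivially for an open star, and by surjectivity of $\partial_1$ onto $\tJ(\gamma)=\sum_{\sigma\ni\gamma}\tJ(\sigma)$ for a closed star), then run the Euler--Poincar\'{e} count through the isomorphisms of Proposition~\ref{prop:FrequentlyUsedIsomorphisms}.

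One small bookkeeping slip worth fixing: your second display should read $\chi(\calR/\calJ,d)=\dim\tS_d+\chi(\calJ,d)$, without the extra $+\dim H_0(\calJ)_d$. Indeed, substituting the identifications into $\chi(\calR/\calJ,d)=\dim H_3-\dim H_2+\dim H_1-\dim H_0$ gives $\dim\tS_d+\dim H_2(\calJ)_d-\dim H_1(\calJ)_d+\dim H_0(\calJ)_d$, and the last three terms are already exactly the $\chi(\calJ,d)$ you wrote out, so there is no leftover $H_0$ term. This is consistent with your closing remark that $\chi(\calR/\calJ,d)=\dim\tS_d+\chi(\calJ,d)$ is an unconditional identity (additivity of $\chi$ along $0\to\calJ\to\calR\to\calR/\calJ\to 0$ together with $\chi(\calR,d)=\dim\tS_d$), not something that requires $H_0(\calJ)=0$. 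The vanishing of $H_0(\calJ)$ is used only to discard the $\dim H_0(\calJ)_d$ term in the first display, which is where both asserted equalities of the proposition actually come from. Since $H_0(\calJ)=0$ anyway, the slip does not affect your final conclusions.
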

\begin{proof}
	We use the fact that $\chi(\calJ,d)=\sum_{i=0}^3 (-1)^{3-i} \dim H_i(\calJ)_d$.	 If $\Delta$ is a closed vertex star with interior vertex $\gamma$, then $\calJ$ has the form
	\[
	\calJ\colon \quad 0\rightarrow \bigoplus_{\sigma\in\Delta^\circ_2} \tJ(\sigma)\xrightarrow{\partial_2} \bigoplus_{\tau\in\Delta^\circ_1} \tJ(\tau)\xrightarrow{\partial_1} \tJ(\gamma)\rightarrow 0,
	\]
	the map $\partial_1$ is surjective from the definition of $\tJ(\gamma)$, hence $H_0(\calJ)=0$.  Thus
	\[
	\dim H_2(\calJ)_d-\dim H_1(\calJ)_d=\chi(\calJ,d).
	\]
	The result follows since $\dim \hspl^r_d(\Delta)=\dim\tS_d+\dim H_2(\calJ)_d$ by Proposition~\ref{prop:FrequentlyUsedIsomorphisms}.
	
	If $\Delta$ is an open vertex star, then $\calJ$ has the form
	\[
	\calJ\colon \quad 0\rightarrow \bigoplus_{\sigma\in\Delta^\circ_2} \tJ(\sigma)\xrightarrow{\partial_2} \bigoplus_{\tau\in\Delta^\circ_1} \tJ(\tau)\xrightarrow{\partial_1} 0\rightarrow 0,
	\]
	so there is not even a vector space in homological index $0$, thus $H_0(\calJ)=0$ as well and the formula follows from the above argument immediately.
\end{proof}

Finally, we clarify what we mean by \textit{generic} vertex positions for a tetrahedral vertex star; this is fairly standard in the literature on splines~\cite{WhiteleyM,WhiteleyComb,Homology,ASWTet,ANS96}.  The main point is that it will suffice to prove Theorem~\ref{thm:LBGenericClosedVertexStars} when $\Delta$ is a generic tetrahedral vertex star.

\begin{definition}\label{def:GenericSimplicialComplex}
	Suppose $\Delta\subset\R^3$ is a star of the vertex $\gamma$.  We call $\Delta$ \textit{generic} with respect to a fixed $r,d$ if, for all sufficiently small perturbations of the vertices $\gamma'\neq\gamma\in\Delta_0$, the resulting vertex star $\Delta'$ satisfies $\dim \hspl^r_d(\Delta')=\dim \hspl^r_d(\Delta)$.  If $r$ and $d$ are understood from context, then we simply say $\Delta$ is generic.
\end{definition}

\begin{lemma}\label{lem:Generic}
	Suppose $\Delta\subset\R^3$ is a star of the vertex $\gamma$, and fix non-negative integers $r$ and $d$.  Then almost all sufficiently small perturbations of the vertices $\gamma'\neq \gamma\in\Delta_0$ result in a vertex star $\Delta'$ which is generic with respect to $r$ and $d$.  Moreover $\dim \hspl^r_d(\Delta)\ge \dim \hspl^r_d(\Delta')$.
\end{lemma}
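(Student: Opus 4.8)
The plan is to realize $\dim\hspl^r_d$, as the vertices move, as a fixed constant minus the rank of a matrix whose entries are polynomials in the vertex coordinates, and then to invoke lower semicontinuity of rank together with the fact that a proper Zariski-closed subset of a real affine space is nowhere dense and has measure zero. Keeping the combinatorial data of $\Delta$ fixed (it does not change under small perturbations, so the perturbed complex is again a tetrahedral vertex star), I would regard the positions of the vertices $\gamma'\neq\gamma$ as a point $\mathbf v\in\R^{3N}$ with $N=\#\Delta_0-1$, writing $\Delta_{\mathbf v}$ for the resulting vertex star and $\mathbf v_0$ for the original configuration. By Proposition~\ref{prop:AlgebraicCriterion}, in degree $d$ one has $\hspl^r_d(\Delta_{\mathbf v})=\delta^{-1}(U(\mathbf v))$, where
\[
\delta\colon\bigoplus_{\iota\in\Delta_3}\tS_d\longrightarrow\bigoplus_{\sigma\in\Delta_2^\circ}\tS_d,\qquad (F_\iota)\longmapsto\bigl(F_\iota-F_{\iota'}\bigr)_{\sigma=\iota\cap\iota'},
\]
and $U(\mathbf v)=\bigoplus_{\sigma\in\Delta_2^\circ}\tJ(\sigma)_d=\bigoplus_{\sigma}\ell_{\sigma,\mathbf v}^{r+1}\,\tS_{d-r-1}$, with $\ell_{\sigma,\mathbf v}$ a linear form cutting out the plane spanned by $\gamma=0$ and the two other vertices of $\sigma$. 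The crucial point is that $\delta$ is the incidence map of the vertex star: it has $\pm1$ entries and no dependence on $\mathbf v$ whatsoever, so all of the dependence on vertex positions is confined to the subspace $U(\mathbf v)$.

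Next I would do a dimension count. For any linear map $T$ and subspace $U$ of its target, $\dim T^{-1}(U)=\dim\ker T+\dim(\im T\cap U)=\dim(\mathrm{source})+\dim U-\dim(\im T+U)$ by rank--nullity; with $T=\delta$ this gives
\[
\dim\hspl^r_d(\Delta_{\mathbf v})=f_3\binom{d+2}{2}+\dim U(\mathbf v)-\dim\bigl(\im\delta+U(\mathbf v)\bigr),
\]
where $f_3=\#\Delta_3$, so the first term is a constant. For the second term, $\ell_{\sigma,\mathbf v}$ is a nonzero linear form for every nondegenerate $2$-face $\sigma$ --- its coefficients are the $2\times2$ minors of the coordinates of the two non-$\gamma$ vertices of $\sigma$ --- so multiplication by $\ell_{\sigma,\mathbf v}^{r+1}$ is injective and $\dim U(\mathbf v)=f_2^\circ\binom{d-r+1}{2}$ off the proper Zariski-closed locus $Z_1$ where some $\ell_{\sigma,\mathbf v}$ degenerates; and $\mathbf v_0\notin Z_1$. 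Finally $\im\delta$ is a fixed subspace $W$, while $U(\mathbf v)$ is the column span of a matrix $C(\mathbf v)$ whose entries are the coordinates of $\ell_{\sigma,\mathbf v}^{r+1}m$ (for $m$ a monomial of degree $d-r-1$) in the monomial basis of $\tS_d$, hence polynomials in $\mathbf v$. Thus, off $Z_1$, $\dim(\im\delta+U(\mathbf v))=\mathrm{rank}\,[B\mid C(\mathbf v)]$ for a fixed matrix $B$ spanning $W$.

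Consequently, on the Zariski-open set $\R^{3N}\setminus Z_1$ (which contains $\mathbf v_0$), $\dim\hspl^r_d(\Delta_{\mathbf v})$ is a constant minus the lower-semicontinuous function $\mathbf v\mapsto\mathrm{rank}\,[B\mid C(\mathbf v)]$, hence is upper semicontinuous and attains its minimum value $m$ on the nonempty Zariski-open --- therefore dense --- subset $V=\{\mathbf v\in\R^{3N}\setminus Z_1:\dim\hspl^r_d(\Delta_{\mathbf v})=m\}$. Since $V$ is Zariski-open and dense in $\R^{3N}$, it has full Lebesgue measure and is dense and co-null inside every Euclidean neighborhood of $\mathbf v_0$; this is the ``almost all sufficiently small perturbations'' part. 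If $\mathbf v'\in V$ then, $V$ being open, every sufficiently small perturbation of $\mathbf v'$ again lies in $V$ and so yields a vertex star of the same spline dimension $m$, so $\Delta_{\mathbf v'}$ is generic. And since $\mathbf v_0\in\R^{3N}\setminus Z_1$ we get $\dim\hspl^r_d(\Delta_{\mathbf v'})=m\le\dim\hspl^r_d(\Delta_{\mathbf v_0})=\dim\hspl^r_d(\Delta)$, which is the ``moreover'' claim.

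I expect the only delicate point to be the reduction in the first two paragraphs: isolating the $\mathbf v$-independent map $\delta$ from the $\mathbf v$-dependent subspace $U(\mathbf v)$ cleanly enough that $\dim\hspl^r_d$ becomes genuinely a constant minus a single honest matrix rank, and in particular checking that $\dim U(\mathbf v)$ is locally constant near $\mathbf v_0$ (which rests on $\ell_{\sigma,\mathbf v}\neq0$, automatic for small perturbations of a genuine complex). Once that bookkeeping is in place, the semicontinuity and measure-theoretic steps are routine, using only that a proper Zariski-closed subset of $\R^{3N}$ is nowhere dense and null.
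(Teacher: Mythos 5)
Your argument is correct and is precisely the content that the paper compresses into its one-sentence proof ("this follows immediately from examining rank conditions on any of the equivalent ways of defining splines as the kernel of a linear transformation"): write $\hspl^r_d(\Delta_{\mathbf v})=\delta^{-1}(U(\mathbf v))$ with $\delta$ combinatorial and $U(\mathbf v)$ depending polynomially on the coordinates, express the dimension as a constant minus a matrix rank, and invoke lower semicontinuity of rank together with the fact that a nonempty Zariski-open subset of $\R^{3N}$ is dense and conull. You have correctly supplied the details the authors leave implicit, including the observation that $\mathbf v_0\notin Z_1$ so the locally-constant count $\dim U(\mathbf v)=f_2^\circ\binom{d-r+1}{2}$ applies near $\mathbf v_0$.
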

\begin{proof}
	This follows immediately from examining rank conditions on any of the equivalent ways of defining splines as the kernel of a linear transformation.
\end{proof}

\section{Duality: fat points and powers of linear forms}\label{sec:duality}
In this section we review a duality between ideals generated by powers of linear forms and ideals of polynomials which vanish to certain orders on sets of points in projective space, called \textit{fat point ideals}.  We reduce the presentation to our case i.e., ideals in the polynomial ring of three variables and the corresponding fat points ideals in $\Proj^2$.  We use this duality, along with combinatorial bounds from~\cite{CHT11}, to prove Corollary~\ref{cor:regularity}, the main result of this section.  Corollary~\ref{cor:regularity} provides explicit lower bounds for the degree in which $\tJ(\gamma)_d=\tS_d$, where $\gamma$ is the interior vertex of a closed vertex star.

We write $[a:b:c]$ for a point in projective $2$-space over $\R$, which we denote by $\Proj^2$.  We let $\tR:=\R[X,Y,Z]$ be the polynomial ring in three variables.  If $P=[a:b:c]\in\Proj^2$ we write $\wp_P$ for the ideal of homogeneous polynomials in $\tR$ which vanish at $P$; i.e. $\wp_P=\langle bX-aY,cX-aZ,cY-bZ\rangle$.  It is straightforward to see that $\wp_P^m$ consists of all polynomials whose homogeneous components vanish to order $m$ at $P$.

\begin{definition}\label{def:FatPointIdeal}
	Let $\X=\{P_1,\ldots,P_k\}$ be a collection of points in $\Proj^2$ and ${\bf m}=\{m_1,\ldots,m_k\}$ a collection of positive integers attached to $P_1,\ldots,P_k$, respectively.  The \emph{ideal of fat points} associated to $\X$ and $\bf m$ is
	\[
	\tI=\tI_\X^{\bf m}:=\bigcap_{i=1}^k \wp_i^{m_i}.
	\]
	If there is a positive integer $s$ so that $m_i=s$ for $i=1,\ldots,k$ then we write $\tI^{(s)}_\X$ instead of $\tI^{\bf m}_\X$.  If $m_i=1$ for $i=1,\ldots,k$ we simply write $\tI_\X$.
\end{definition}

\begin{remark}
	It is straightforward to see that $\tI^{\bf m}_\X$ is the set of polynomials whose homogeneous components vanish to order $m_i$ at the point $P_i$, for $i=1,\ldots,k$.  Since $\wp_i^{m_i}$ is graded for each $\sigma\in\Omega$, $\tI^{\bf m}_\X$ is also a graded ideal.
\end{remark}

\begin{remark}
	The ideal $I^{(s)}_\X$ in Definition~\ref{def:FatPointIdeal} is called the $s$th \textit{symbolic power} of $I_{\X}$, and consists of the polynomials whose homogeneous components vanish to order $s$ on $\X$.  The $s$th symbolic power $\tI^{(s)}$ can be defined for any ideal $\tI\subset\tS$, but the definition given above for points is all we will need.
\end{remark}


Now consider simultaneously the polynomial rings $\tS=\R[x,y,z]$ and $\tR=\R[X,Y,Z]$, and let $\tR$ act on $\tS$ as polynomial differential operators.  Namely, if $h\in \tR$ and  $f\in \tS$ then $h\circ f = h\bigl(\frac{\partial}{\partial x}, \frac{\partial}{\partial y}, \frac{\partial }{\partial z}\bigr)\circ f$.  We call this the \emph{apolarity action} of $\tR$ on $\tS$.  The apolarity action induces a perfect pairing $\tR_d\times \tS_d\to \R$ by $(h,f)\to h\circ f$.

\begin{example}
	Let $F=X^2+Y^2+Z^2\in\tR$.  If $f\in\tS$, then $F\circ f=\frac{\partial^2 f}{\partial x^2}+\frac{\partial^2 f}{\partial y^2}+\frac{\partial^2 f}{\partial z^2}$.
\end{example}

If $\tI\subset \tR$ is an ideal of $\tR$, then the \emph{inverse system} $\inverseSystem{\tI}$ of $\tI$ is defined as
\[
\inverseSystem{\tI}:=\{f\in\tS: h\circ f=0 \mbox{ for all } h\in \tI\}.
\]
If $\tI$ is graded, then $\inverseSystem{\tI}$ is a graded vector space (it is generally \textit{not} an ideal) with graded structure $\inverseSystem{\tI}\cong\bigoplus_{d\ge 0} \inverseSystem{\tI}_d$, where $\inverseSystem{\tI}_d$ is the vector space of all homogeneous polynomials of degree $d$ in $\inverseSystem{\tI}$.

\begin{example}
	Let $P=[0:0:1]\in\Proj^2$, with $\wp_P=\langle X,Y\rangle$.  Then $\inverseSystem{\wp_P}=\spn\{1,z,z^2,z^3,\ldots\}$.  More generally, if $P=[a:b:c]$ then $\inverseSystem{\wp_P}=\spn\{1,ax+by+cz,(ax+by+cz)^2,\ldots\}$.
\end{example}

For a graded ideal $\tI\subset\tR$, the apolarity action induces an isomorphism of vector spaces $(\tR/\tI)_d\cong \inverseSystem{\tI}_d$ (this follows since the apolarity action induces a perfect pairing $\tR_d\times \tS_d\to \R$).  Thus one can deduce $\dim \inverseSystem{\tI}_d$ from $\dim \tI_d$, and vice-versa.  The following result of Iarrobino~\cite{Iarrobino} describes the inverse system of a fat point ideal.

We first introduce some notation which suits our context.  If $\sigma$ is a two-simplex in $\R^3$ whose affine span contains the origin, let $\ell_\sigma=ax+by+cz$ be a choice of linear form vanishing on $\sigma$ (well-defined up to constant multiple).  The coefficients of $\ell_\sigma$ define the point $P_\sigma= [a:b:c]\in\Proj^2$ (notice this point does not depend on the multiple of $\ell_\sigma$ chosen), which in turn defines the ideal $\wp_\sigma=\wp_{P_\sigma} \subseteq \tR$.

%
%

\begin{theorem}[Iarrabino~\cite{Iarrobino}]\label{theor:iarrobino}
	Let $\Omega$ be a collection of $2$-simplices in $\R^3$ each of whose affine span contains the origin and let $m_\sigma$ be a positive integer attached to each $\sigma\in\Omega$.  Put $\X=\{P_\sigma\}_{\sigma\in\Omega}$.  If $d\ge \max\{m_\sigma+1\}$, let $d-\mathbf{m}=\{d-m_\sigma\}_{\sigma\in\Omega}$ and $\tI=\tI^{d-\mathbf{m}}_\X$.  Then
	\begin{equation*}
	\bigl\langle \ell_{\sigma}^{m_\sigma+1}\colon \sigma\in \Omega\bigr\rangle_d =
	%
	\begin{cases}
	0 &\mbox{for\; } d\leq \max\{m_\sigma\} \\
	(\inverseSystem{\tI})_d\cong \left(\dfrac{\tR}{\tI}\right)_d &\mbox{for\; } d\geq \max\{m_\sigma+1\}
	\ . 
	\end{cases}
	\end{equation*} 
\end{theorem}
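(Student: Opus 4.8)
The plan is to prove the two cases of Theorem~\ref{theor:iarrobino} separately, using the perfect pairing $\tR_d\times\tS_d\to\R$ as the central tool. Throughout, write $J_\Omega=\langle \ell_\sigma^{m_\sigma+1}:\sigma\in\Omega\rangle$. The first observation is a local computation: fix one $\sigma$ and set $\ell=\ell_\sigma=ax+by+cz$, $m=m_\sigma$, $P=P_\sigma=[a:b:c]$. I would show that a homogeneous $f\in\tS_d$ is annihilated (under the apolarity action) by the whole ideal $\langle\ell^{m+1}\rangle$ precisely when $f\in(\inverseSystem{\langle\ell^{m+1}\rangle})_d$, and identify this space with $(\wp_P^{\,d-m})^\perp$ in degree $d$; concretely, after a linear change of coordinates sending $P$ to $[0:0:1]$ so that $\ell$ becomes (a scalar times) the dual variable to $z$, the operator $\ell^{m+1}$ acts as $\partial_z^{m+1}$ up to scalar, and $h\ell^{m+1}\circ f=0$ for all $h\in\tR$ forces $\partial_z^{m+1}f=0$, i.e. $\deg_z f\le m$. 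In these coordinates $\wp_P=\langle X,Y\rangle$, so $\wp_P^{\,d-m}$ is spanned in degree $d$ by monomials $X^iY^jZ^{d-i-j}$ with $i+j\ge d-m$, and its perp in $\tS_d$ under the monomial pairing is exactly $\{f\in\tS_d:\deg_z f\le m\}$. This gives the equality $(\inverseSystem{\langle\ell_\sigma^{m_\sigma+1}\rangle})_d=(\wp_\sigma^{\,d-m_\sigma})^\perp_d$ for each single $\sigma$, once $d\ge m_\sigma+1$ (and for $d\le m_\sigma$ the operator $\partial_z^{m+1}$ kills nothing nonzero, but equally $\langle\ell^{m+1}\rangle_d=0$, consistent with the claimed $0$).

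Next I would globalize. Since $\perp$ converts sums of ideals into intersections of perp-spaces (as $h\circ f=0$ for all $h$ in $\tI+\tI'$ iff it holds for all $h$ in $\tI$ and all $h$ in $\tI'$), we get
\[
\bigl(\inverseSystem{J_\Omega}\bigr)_d=\Bigl(\inverseSystem{\textstyle\sum_\sigma\langle\ell_\sigma^{m_\sigma+1}\rangle}\Bigr)_d=\bigcap_{\sigma\in\Omega}\bigl(\inverseSystem{\langle\ell_\sigma^{m_\sigma+1}\rangle}\bigr)_d=\bigcap_{\sigma\in\Omega}\bigl(\wp_\sigma^{\,d-m_\sigma}\bigr)^\perp_d=\bigl(\tI^{d-\mathbf m}_\X\bigr)^\perp_d,
\]
where the last equality again uses that perp takes intersections of homogeneous subspaces to sums, dualizing $\tI^{d-\mathbf m}_\X=\bigcap_\sigma\wp_\sigma^{\,d-m_\sigma}$ — here I must be a touch careful that distinct $\sigma$ can share the same $P_\sigma$, in which case one simply takes the largest exponent, and the perp identity still holds. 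Then, to pass from $\inverseSystem{J_\Omega}$ back to $J_\Omega$ itself, I invoke the perfect pairing $\tR_d\times\tS_d\to\R$: for a graded ideal $\tI\subseteq\tR$ we have $(\tR/\tI)_d\cong(\inverseSystem{\tI})_d$, hence $\dim(J_\Omega)_d=\dim\tR_d-\dim(\inverseSystem{J_\Omega})_d$. Applying this with $\tI=\tI^{d-\mathbf m}_\X$ on the fat-point side gives $\dim(\inverseSystem{\tI^{d-\mathbf m}_\X})_d=\dim(\tR/\tI^{d-\mathbf m}_\X)_d$, and combining with the displayed chain of equalities yields $\dim(J_\Omega)_d=\dim(\tR/\tI^{d-\mathbf m}_\X)_d$. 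Since the pairing is perfect one actually gets a canonical identification $(J_\Omega)_d\cong(\inverseSystem{J_\Omega})_d^{\vee}$, but for the statement as written the isomorphism $(\inverseSystem{\tI^{d-\mathbf m}_\X})_d\cong(\tR/\tI^{d-\mathbf m}_\X)_d$ is already recorded in the text preceding the theorem, so I would phrase the conclusion as: $\bigl(J_\Omega\bigr)_d\cong(\inverseSystem{\tI^{d-\mathbf m}_\X})_d\cong(\tR/\tI^{d-\mathbf m}_\X)_d$ for $d\ge\max\{m_\sigma+1\}$, and $\bigl(J_\Omega\bigr)_d=0$ for $d\le\max\{m_\sigma\}$, the latter simply because $\ell_\sigma^{m_\sigma+1}$ has degree $m_\sigma+1>d$.

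The main obstacle, and the place I would spend the most care, is the single-$\sigma$ local identification $(\inverseSystem{\langle\ell_\sigma^{m_\sigma+1}\rangle})_d=(\wp_\sigma^{\,d-m_\sigma})^\perp_d$: one must check that being annihilated by \emph{all} of $\langle\ell^{m+1}\rangle$ (not just by $\ell^{m+1}$) contributes nothing extra — i.e. that $h\ell^{m+1}\circ f=0$ for every $h$ already follows from $\ell^{m+1}\circ f=0$ when $f$ is homogeneous, which is clear since $\ell^{m+1}\circ f=0$ forces $\ell^{m+1}\circ(\text{derivatives of }f)$ to vanish as well by commuting constant-coefficient operators — and, on the fat-point side, that $(\wp_P^{\,t})^\perp_d$ really is $\{f\in\tS_d:\text{the order of vanishing of }f\text{ at }P\text{ is}\ \ge d-t+1\}^c$, matched correctly with $\{\deg_z f\le m\}$ after the coordinate change. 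Everything else is a formal consequence of the perfect pairing and the interchange of sums and intersections under $\perp$.
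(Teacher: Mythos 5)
Your overall plan — reduce to one point via the perfect pairing, do a monomial computation in good coordinates, then pass from sums to intersections — is the right one, and it is in fact how the local duality behind Iarrobino's theorem is usually verified. But the central local identification is stated and computed incorrectly, and the error propagates: carried out literally, your chain would produce $\dim(J_\Omega)_d=\dim(\tI^{d-\mathbf m}_\X)_d$ rather than $\dim(\tR/\tI^{d-\mathbf m}_\X)_d$, which is the wrong answer.

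Concretely, after sending $P$ to $[0:0:1]$ you assert that $(\wp_P^{\,d-m})^\perp_d=\{f\in\tS_d:\deg_z f\le m\}$. This is not right. The space $(\wp_P^{\,d-m})_d\subset\tR_d$ is spanned by the monomials $X^iY^jZ^k$ of degree $d$ with $i+j\ge d-m$, equivalently $k\le m$; under the monomial pairing its perp in $\tS_d$ is spanned by the \emph{complementary} monomials $x^iy^jz^k$ with $k\ge m+1$, i.e.
\[
(\wp_P^{\,d-m})^\perp_d=\{f\in\tS_d:z^{m+1}\mid f\}=\langle\ell^{m+1}\rangle_d .
\]
What you wrote down is $(\wp_P^{\,d-m})_d$ itself (under the $X\leftrightarrow x$ relabelling), not its perp — you dropped the $\perp$ on the fat-point side while keeping it on the power-of-linear-form side. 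Note also that the corrected identity $\langle\ell^{m+1}\rangle_d=(\wp_P^{\,d-m})^\perp_d$ is already the statement of the theorem for a single $\sigma$; you do not need to pass through $\inverseSystem{J_\Omega}$ at all. A second, compensating slip occurs in the displayed chain: you claim $\bigcap_\sigma(\wp_\sigma^{\,d-m_\sigma})^\perp_d=(\tI^{d-\mathbf m}_\X)^\perp_d$, citing the exchange of sums and intersections under $\perp$, but that exchange goes the other way: $\tI^{d-\mathbf m}_\X=\bigcap_\sigma\wp_\sigma^{\,d-m_\sigma}$ implies $(\tI^{d-\mathbf m}_\X)^\perp_d=\sum_\sigma(\wp_\sigma^{\,d-m_\sigma})^\perp_d$, not the intersection. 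Already for a single point $P=[0:0:1]$, $m=0$, $d=3$ your chain gives $\dim(J_\Omega)_3=\dim(\wp_P^3)_3=4$, whereas $\dim\langle z\rangle_3=6=\dim(\tR/\wp_P^3)_3$. The repair stays entirely within your framework: prove the direct identity $\langle\ell_\sigma^{m_\sigma+1}\rangle_d=(\wp_\sigma^{\,d-m_\sigma})^\perp_d$ for each $\sigma$, then
\[
(J_\Omega)_d=\sum_\sigma\langle\ell_\sigma^{m_\sigma+1}\rangle_d=\sum_\sigma(\wp_\sigma^{\,d-m_\sigma})^\perp_d=\Bigl(\bigcap_\sigma(\wp_\sigma^{\,d-m_\sigma})_d\Bigr)^{\!\perp}=\bigl((\tI^{d-\mathbf m}_\X)_d\bigr)^{\!\perp}=(\inverseSystem{\tI^{d-\mathbf m}_\X})_d ,
\]
and the last space is isomorphic to $(\tR/\tI^{d-\mathbf m}_\X)_d$ as you note. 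The vanishing for $d\le\max\{m_\sigma\}$ is, as you say, just a degree count, and your caveat about repeated dual points is handled by taking the largest exponent at a repeated point.
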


Theorem~\ref{theor:iarrobino} has an especially nice formulation in the case of uniform powers.  We state this for the ideal $\tJ(\gamma)$ of the interior vertex $\gamma$ of a closed vertex star, as this is the case of interest to us.

\begin{corollary}\label{cor:uniformiarrobino}
	Suppose $\Delta\subset\R^3$ is a vertex star with unique interior vertex $\gamma$, so $\tJ(\gamma)=\langle \ell_\sigma^{r+1}: \sigma\in\Delta^\circ_2\rangle$.  Put $\X=\{P_\sigma\}_{\sigma\in\Delta^\circ_2}$ and let $\tI_\X=\cap_{\sigma\in\Delta^\circ_0} \wp_\sigma\subset\tR$.  Then
	\[
	\dim \tJ(\gamma)_d=\left\lbrace
	\begin{array}{ll}
	0 & d \le r\\
	\dim\left(\dfrac{\tS}{\tI_\X^{(d-r)}}\right)_d & d\ge r+1
	\end{array}
	\right.
	\]
\end{corollary}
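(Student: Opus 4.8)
The plan is to derive Corollary~\ref{cor:uniformiarrobino} as the special case of Theorem~\ref{theor:iarrobino} where all the exponents are equal, and then translate the inverse-system statement into a Hilbert-function statement on the quotient side. Since $\Delta$ has unique interior vertex $\gamma$, the two-faces $\sigma$ with $\gamma\subseteq\sigma$ are exactly the interior two-faces $\sigma\in\Delta^\circ_2$, and each such $\sigma$ contains $\gamma$, which sits at the origin; hence the affine span of every $\sigma\in\Delta^\circ_2$ contains the origin, so Theorem~\ref{theor:iarrobino} applies with $\Omega=\Delta^\circ_2$.

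First I would set $m_\sigma=r$ for every $\sigma\in\Delta^\circ_2$, so that $\langle\ell_\sigma^{m_\sigma+1}:\sigma\in\Omega\rangle=\langle\ell_\sigma^{r+1}:\sigma\in\Delta^\circ_2\rangle=\tJ(\gamma)$ by Definition~\ref{def:IdealsOfFaces}. With this choice, $\max\{m_\sigma\}=r$, so the threshold ``$d\ge\max\{m_\sigma+1\}$'' becomes $d\ge r+1$, and the case ``$d\le\max\{m_\sigma\}$'' becomes $d\le r$. For $d\le r$, Theorem~\ref{theor:iarrobino} gives $\tJ(\gamma)_d=0$ directly, matching the first line of the corollary. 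For $d\ge r+1$, the multiset $d-\mathbf{m}=\{d-m_\sigma\}_{\sigma\in\Omega}$ is constant equal to $d-r$, so the fat point ideal $\tI^{d-\mathbf m}_\X$ is $\bigcap_{\sigma\in\Delta^\circ_2}\wp_\sigma^{\,d-r}=\tI_\X^{(d-r)}$, the $(d-r)$-th symbolic power of $\tI_\X$, by Definition~\ref{def:FatPointIdeal} and the following remark. (I would note in passing that $\X=\{P_\sigma\}_{\sigma\in\Delta^\circ_2}$ is the point set appearing both in the corollary and in the instance of Theorem~\ref{theor:iarrobino}; the indexing over $\Delta^\circ_0$ in the statement is a harmless typo for $\Delta^\circ_2$ since $\gamma$ is the only interior $0$-face.) Theorem~\ref{theor:iarrobino} then yields $\tJ(\gamma)_d\cong(\inverseSystem{\tI_\X^{(d-r)}})_d\cong(\tR/\tI_\X^{(d-r)})_d$ as $\R$-vector spaces.

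It remains to pass from $\dim(\tR/\tI_\X^{(d-r)})_d$ to $\dim(\tS/\tI_\X^{(d-r)})_d$. Here I would invoke the perfect pairing $\tR_d\times\tS_d\to\R$ induced by the apolarity action: by the discussion preceding Theorem~\ref{theor:iarrobino}, for any graded ideal it identifies $(\tR/\tI)_d$ with $\inverseSystem{\tI}_d$. Concretely, $\dim\tR_d=\dim\tS_d=\binom{d+2}{2}$ and $\dim\tI_d=\binom{d+2}{2}-\dim\inverseSystem{\tI}_d$, so $\dim(\tR/\tI)_d=\dim\inverseSystem{\tI}_d$. Applying this with $\tI=\tI_\X^{(d-r)}$, and noting that $\tI_\X^{(d-r)}$ has the ``same'' Hilbert function whether we regard it inside $\tR$ or inside $\tS$ (both are polynomial rings in three variables; the notation $\tS/\tI_\X^{(d-r)}$ in the corollary just reuses the ideal in $\tS$-coordinates), we get $\dim\tJ(\gamma)_d=\dim(\tS/\tI_\X^{(d-r)})_d$ for $d\ge r+1$, completing the proof.

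The only real subtlety — and the step I would be most careful about — is the bookkeeping between the two polynomial rings $\tR$ and $\tS$ and making sure the symbolic power interpretation of $d-\mathbf m$ is correct; the mathematics is entirely contained in Theorem~\ref{theor:iarrobino} and the apolarity pairing, both already available, so no new estimates are needed.
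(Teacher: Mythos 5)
Your proof is correct and follows the route the paper itself intends: the paper presents Corollary~\ref{cor:uniformiarrobino} as the uniform-exponent specialization of Theorem~\ref{theor:iarrobino} and leaves the bookkeeping (setting $m_\sigma=r$, recognizing $\tI^{d-\mathbf m}_\X=\tI_\X^{(d-r)}$, passing through the apolarity pairing) unwritten, which is exactly what you spell out. You have also correctly flagged the two typos in the statement — the intersection should be over $\sigma\in\Delta^\circ_2$ rather than $\Delta^\circ_0$, and the quotient should be read in $\tR$ (or the ideal transported to $\tS$, which changes nothing about the Hilbert function) — so your account is in fact slightly more careful than the source; the only loose thread is that Theorem~\ref{theor:iarrobino} as stated requires $m_\sigma\ge 1$, i.e.\ $r\ge 1$, but the $r=0$ case is immediate anyway.
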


The proof of Theorem \ref{theor:iarrobino} can be found in  \cite{Iarrobino}, see also \cite{Geramita} for an introduction to inverse system of fat points and \cite{FatPoints} for the connection between fat points and splines.

\begin{example}\label{Ex:regOct}
	Let $\Delta$ be the regular octahedron with central vertex $\gamma$ at the origin and vertices at $(\pm 1, 0, 0), (0,\pm 1, 0)$ and $(0,0,\pm 1)$. 
	Then there are 12 interior two-dimensional faces 
	which we denoted as $\sigma_1,\ldots,\sigma_{12}$; we number them so that they lie in the planes defined by the linear forms $\ell_{\sigma_i}=x$, for $i=1,\dots, 4$, $\ell_{\sigma_i}=y$ for $i=5,\dots 8$, and $\ell_{\sigma_i}=z$ for $i= 9,\dots, 12$. The dual points are $P_{\sigma_1}=[1: 0: 0]$, $P_{\sigma_5}= [0:1:0]$, and $P_{\sigma_9}=[0:0:1]$, see the graph on the left of Figure \ref{Fig:GenOcta}. 
	These points define the ideals $\wp_{\sigma_1}=\langle Y,Z\rangle$, $\wp_{\sigma_5}=\langle X,Z\rangle$ and $\wp_{\sigma_9}=\langle X,Y\rangle$.
	For a positive integer $r$, and $d\geq r+1$ let $\tI= \cap_{\sigma\in\Delta_2^\circ}\wp_{\sigma}^{d-r}$. 
	Theorem \ref{theor:iarrobino} says that 
	\[\dim \tJ(\gamma)_d= \dim \bigl\langle \ell_{\sigma}^{r+1}\colon \sigma\in\Delta_2^\circ\bigr\rangle_d= \dim (\tR/I)_d. \]
	For example, if $r=1$ and $d=3$ then 
	$I= \langle X, Y\rangle^2  \cap \langle X,Z\rangle^2 \cap \langle Y,Z\rangle^2$ and $\dim (\tR/I)_3 = 9$. On the other hand, $\tJ(\gamma) = \langle x^2, y^2, z^2\rangle $ and $\dim \tJ(\gamma)_3 = 9$. 
	
\end{example}

\begin{example}\label{Ex:genOct}
	Let now $\Delta'$ be a generic octahedron with central vertex $\gamma$ at the origin. Then the 12 two-dimensional faces $\sigma_i\in \Delta_2'$ lie on 12 different planes through the origin of $\R^3$ defined by the linear forms $\ell_i=a_ix+b_iy+c_iz$. These linear forms define 12 points $[a_i:b_i:c_i]$ in $\Proj^2$. Notice that each of the edges $\tau_j\in{\Delta'}_1^\circ$ lies in the intersection of four of these planes, in $\Proj^2$ that means that the four dual points to the linear forms vanishing at those four planes lie on a line -- thus there is a dotted line in Figure~\ref{Fig:GenOcta} for every interior edge of $\Delta'$. The dual diagram in $\Proj^2$ is illustrated on the right in Figure \ref{Fig:GenOcta}.
\end{example}

\begin{figure}[htbp]
	\centering
	\includegraphics[height=4.5cm]{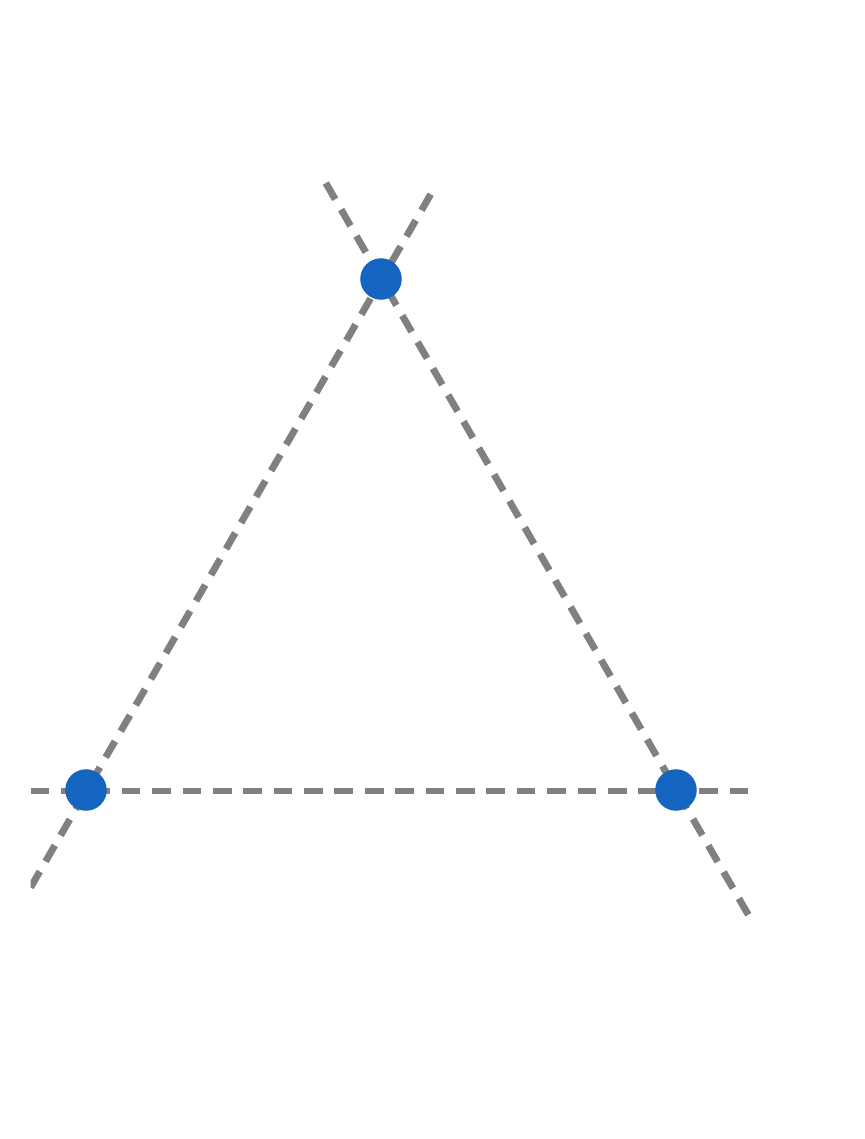}\qquad	\includegraphics[height=4.5cm]{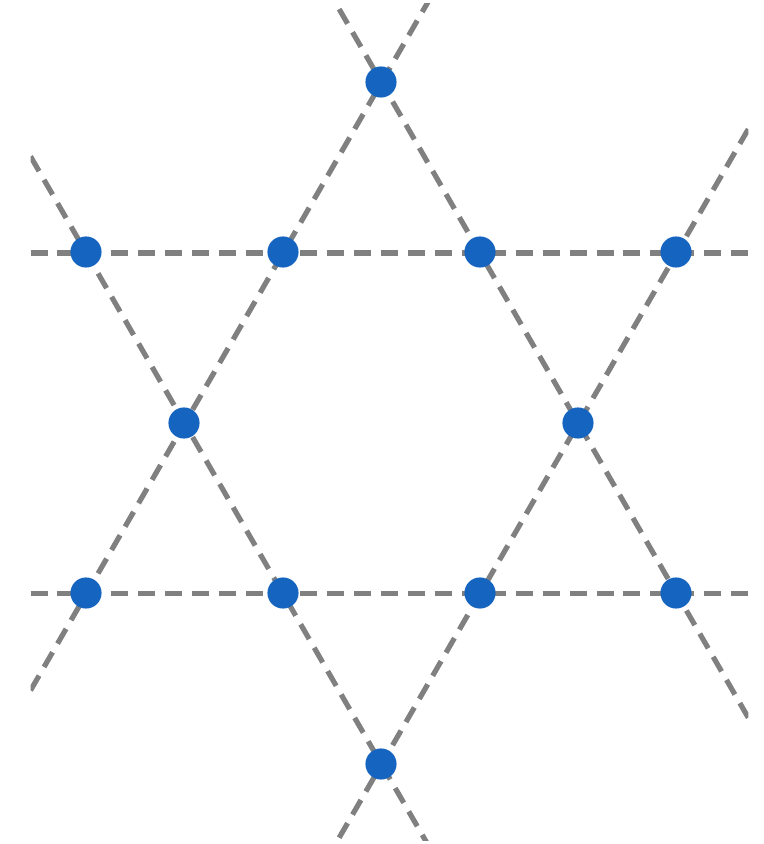}
	\caption{Dual graph of a regular octahedron $\Delta$ (on the left) and a generic octahedron $\Delta'$ (on the right).}\label{Fig:GenOcta}
\end{figure}


\subsection{The Waldschmidt constant}
Given a graded ideal $\tI\subset \tR$, we put $\alpha(\tI):=\min\{d:\tI_d\neq 0\}$.  For instance, if $\X$ is a set of points in $\Proj^2$, then $\alpha(\tI_\X)$ is the minimum degree of a homogeneous polynomial which vanishes on $\X$.  An asymptotic invariant attached to the ideal $\tI_\X$ which has been studied in many different contexts is the Waldschmidt constant, defined as
\[
\walpha(\tI_\X)=\inf_{s> 0}\left\lbrace\frac{\alpha(\tI_\X^{(s)})}{s}\right\rbrace
\]
It is known that the Waldschmidt constant is actually a limit (this follows from subbaditivity of the sequence $\alpha(\tI_\X^{(s)})$ - see~\cite[Lemma~2.3.1]{BH102}); so $\walpha(I)=\lim\limits_{s\to\infty} \frac{\alpha(\tI_\X^{(s)})}{s}$.

\begin{remark}
	The limit $\lim_{s\to\infty}\alpha(\tI_\X^{(s)})/s$ was first introduced by Waldschmidt~\cite{Waldschmidt} in complex analysis, although the ideas behind the Waldschmidt constant go back at least to Nagata's solution to Hilbert's fourteenth problem.  In commutative algebra, the Waldschmidt constant gives bounds related to the \textit{containment problem}; in other words for what pairs of integers $(r,s)$ we have the containment $\tI^{(s)}\subset \tI^r$ for an ideal $\tI$ in a polynomial ring (the first use of the Waldschmidt constant for these purposes is in~\cite{BH102}).
\end{remark}

\begin{proposition}\label{prop:regularitybound}
	For a closed vertex star $\Delta\subset\R^3$, let $\Omega\subset\Delta^\circ_2$ be a finite subset of two-faces such that $\dim\spn\{\ell_{\sigma}\}_{\sigma\in\Omega}=3$.  Put $\tJ(\gamma)=\langle\ell_\sigma^{r+1}:\sigma\in\Omega\rangle$, $\X=\{P_\sigma\}_{\sigma\in\Omega}$, and $\tI_\X=\cap_{\sigma\in\Omega}\wp_\sigma$.  Then $\tJ(\gamma)_d=\tS_d$ for $d> \dfrac{\walpha(\tI_\X)r}{\walpha(\tI_\X)-1}$.  
\end{proposition}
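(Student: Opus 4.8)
The plan is to convert the statement into one about the initial degree $\alpha$ of symbolic powers of $\tI_\X$ by means of Iarrobino's duality, and then to bound that initial degree from below by the Waldschmidt constant. First I would note that every interior two-face of a vertex star with apex at the origin has affine span through the origin, so Theorem~\ref{theor:iarrobino} applies to the collection $\Omega$ with all $m_\sigma=r$ (this is just Corollary~\ref{cor:uniformiarrobino} with $\Omega$ in place of $\Delta_2^\circ$). It yields, for every integer $d\ge r+1$, an isomorphism $\tJ(\gamma)_d\cong(\tR/\tI_\X^{(d-r)})_d$, so that $\dim\tJ(\gamma)_d=\binom{d+2}{2}-\dim(\tI_\X^{(d-r)})_d$. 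Since $\dim\tS_d=\binom{d+2}{2}$, this produces the chain of equivalences
\[
\tJ(\gamma)_d=\tS_d\iff(\tI_\X^{(d-r)})_d=0\iff\alpha(\tI_\X^{(d-r)})>d,
\]
the last step because a nonzero homogeneous element of an ideal of $\tR$ in degree $e$ forces nonzero elements in all degrees $\ge e$.

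Next I would bring in the Waldschmidt constant. By definition $\walpha(\tI_\X)=\inf_{s>0}\alpha(\tI_\X^{(s)})/s$, so $\alpha(\tI_\X^{(s)})\ge s\,\walpha(\tI_\X)$ for every $s>0$; applied with $s=d-r$ this shows it suffices to find $d>r$ with $(d-r)\,\walpha(\tI_\X)>d$. The hypothesis $\dim\spn\{\ell_\sigma\}_{\sigma\in\Omega}=3$ is precisely the statement that the dual points $\X=\{P_\sigma\}$ do not all lie on a line of $\Proj^2$, and for such a point set $\walpha(\tI_\X)>1$ (for instance because $\alpha(\tI_\X)\ge2$ together with the Chudnovsky-type inequality $\walpha(\tI_\X)\ge(\alpha(\tI_\X)+1)/2$; see also~\cite{BH102}). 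Because $\walpha(\tI_\X)-1>0$, the inequality $(d-r)\,\walpha(\tI_\X)>d$ rearranges to $d>\dfrac{\walpha(\tI_\X)\,r}{\walpha(\tI_\X)-1}$. Any such $d$ automatically satisfies $d\ge r+1$, so the reduction of the first paragraph is in force, and chaining the two displays gives $\tJ(\gamma)_d=\tS_d$.

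Almost everything here is formal once Theorem~\ref{theor:iarrobino} is available; the one ingredient that is not pure bookkeeping is the strict inequality $\walpha(\tI_\X)>1$ for a non-collinear point set, and the one place to take care is verifying that $d>\walpha(\tI_\X)r/(\walpha(\tI_\X)-1)$ really does force $d\ge r+1$ (so Iarrobino's isomorphism applies) and that $s=d-r$ is a positive integer (so the Waldschmidt lower bound $\alpha(\tI_\X^{(s)})\ge s\,\walpha(\tI_\X)$ may be used). I do not expect any obstacle beyond these two checks.
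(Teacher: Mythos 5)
Your proposal is correct and follows essentially the same route as the paper: translate via Iarrobino's duality (Corollary~\ref{cor:uniformiarrobino}) into the condition $\alpha(\tI_\X^{(d-r)})>d$, bound $\alpha(\tI_\X^{(d-r)})$ from below by $(d-r)\walpha(\tI_\X)$ using the definition of the Waldschmidt constant as an infimum, invoke Chudnovsky's inequality to get $\walpha(\tI_\X)>1$ from the non-collinearity of $\X$ (equivalently the $3$-dimensional span of the $\ell_\sigma$), and rearrange. Your treatment of the two minor checks (that $s=d-r$ is a positive integer and that the target bound forces $d\ge r+1$) is, if anything, slightly more explicit than the paper's.
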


\begin{proof}
	By Corollary \ref{cor:uniformiarrobino}, $\tJ(\gamma)_d=\tS_d$ if and only if $d< \alpha(\tI_\X^{(d-r)})$.  We may assume $d> r$ (otherwise $\tJ(\gamma)_d=0$).  Dividing both sides by $d-r$ gives $\tJ(\gamma)_d=\tS_d$ if and only if
	\[
	\frac{d}{d-r}< \frac{\alpha(\tI_\X^{(d-r)})}{d-r}.
	\]
	Since the right hand side is larger than $\widehat{\alpha}(\tI_\X)$, we see that 
	\[
	\mbox{if }\frac{d}{d-r}< \widehat{\alpha}(\tI_\X)\mbox{ then } \tJ(\gamma)_d=\tS_d.
	\]
	Solving for $d$ yields the proposition, provided that $\widehat{\alpha}(\tI_\X)>1$.  This latter inequality follows from a result of Chudnovsky that $\walpha(\tI_\X)\ge \frac{\alpha(\tI_\X)+1}{2}$ (see~\cite[Proposition~3.1]{HH13}).  Thus if $\alpha(\tI_\X)\ge 2$ then $\widehat{\alpha}(\tI_\X)\ge \frac{3}{2}>1$, so $\widehat{\alpha}(\tI_\X)=1$ if (and only if) $\alpha(\tI_\X)=1$, that is $\X=\{P_\sigma\}_{\sigma\in\Omega}$ is contained in a line.  But this would imply that the span of the corresponding linear forms $\{\ell_\sigma\}_{\sigma\in\Omega}$ is at most two dimensional, contrary to assumption.
\end{proof}

\subsection{A reduction procedure for fat points}\label{subsec:reductionp}
Following the notation introduced in Section \ref{sec:duality}, let $\Omega$ denote a collection of faces in $\Delta_2^\circ$ of a vertex star $\Delta$. The dual points defined by the linear forms vanishing on these faces define the dual points $\{P_\sigma\}_{\sigma\in\Omega}\subseteq \Proj^2$. Consider a collection $\mathbf{m}$ of non-negative integers $\{m_\sigma\}_{\sigma\in\Omega}$, and the fat points ideal $I_\Omega^{\mathbf{m}}=\cap_{\sigma\in\Omega}\wp_\sigma^{m_\sigma}$. 

If $\tau\in\Delta_1^\circ$, then $\tau$ is the intersection of at two (distinct) planes in $\R^3$ which contain the faces $\Omega_\tau\subseteq \Omega$ having $\tau$ as one of their edges. Since $\tau\in\cap_{\sigma\in\Omega_\tau}\sigma$, then the dual points $\{P_{\sigma}\colon \sigma\in\Omega_\tau \}$ lie in a common line $L_\tau$ in $\Proj^2$. By construction, for each interior edge $\tau$, the corresponding dual line $L_\tau$ contains at least two points $P_\sigma$ for $\sigma\in\Delta_2^\circ$. 

In the following we describe the procedure introduced by Cooper, Harbourne, and Tietler in \cite{CHT11} to give bounds on $\dim (I_\Omega^{\mathbf{m}})_d$.  This is done by constructing the so-called \emph{reduction vector}, which we now describe.  Given a sequence of non-negative integers $\mathbf{m} $, a collection of points $\{P_\sigma\}_{\sigma\in\Omega}$, and the sequence of lines $L_1,\dots, L_n$ of not necessarily different lines from the collection $\{L_\tau\}_{\tau\in\Delta_1^\circ}$, the vector $\mathbf{d}=(d_1,\dots, d_n)$ is defined  inductively as follows. 

\begin{enumerate}
	\item Starting with $L_1$, we define $d_1$ as the number 
	of points lying on $L_1$, counted with multiplicity. Namely, if $L_1=L_\tau$ for some $\tau\in\Delta_1^\circ$, then $d_1= \sum_{\sigma\in\Omega_\tau}m_\sigma$.\label{Reduction-step1}
	\item Reduce by 1 the multiplicities of all the points lying on $L_1$ and consider the sequence of points $\{P_\sigma\}_{\sigma\in\Omega}$ now with multiplicities $m_\sigma$ for $\sigma\notni \tau$, and $m_\sigma-1$ for $\sigma\ni \tau$. \label{Reduction-step2}
	\item Repeat \eqref{Reduction-step1} for $L_i$ for $i=2,\dots, n$ and the sequence of points with reduced multiplicities obtained in \eqref{Reduction-step2} at step $i-1$. 
\end{enumerate}

A reduction vector $\mathbf{d} =(d_1,\dots,d_n)$ is said to be a \emph{full reduction vector} for the fat points ideal $I_{\Omega}^{\mathbf{m}}=\cap_{\sigma\in\Omega}\wp_{\sigma}^{m_\sigma}$ if $\sum_{i=1}^n d_i=\sum_{\sigma\in\Omega} m_\sigma$.

\begin{example}
	Let $\Delta$ be the regular octahedron from Example \ref{Ex:regOct}. Taking $m_\sigma= 2$ for every $\sigma\in \Delta_2^\circ$, and the ideal of fat points 
	$I= \wp_{\sigma_1}^{2}\cap \wp_{\sigma_5}^2\cap \wp_{\sigma_9}^2$.
	The set of lines in this case is $\{L_\tau\colon \tau\in\Delta_1^\circ\}= \{X, Y, Z\}$. If we take $L_1=Y$, two of the points lie on $L_1$, each of them with multiplicity 2, so $d_1= 4$. Notice that $\langle I, Y\rangle  = \langle X^2Z^2, Y\rangle$. See Figure \ref{Fig:regOct}, where we produce a reduction vector following starting from the dual graph of $\Delta$ in $\Proj^2$.
	
	\begin{figure}[htbp]
		\centering
		\includegraphics[height=2.8cm]{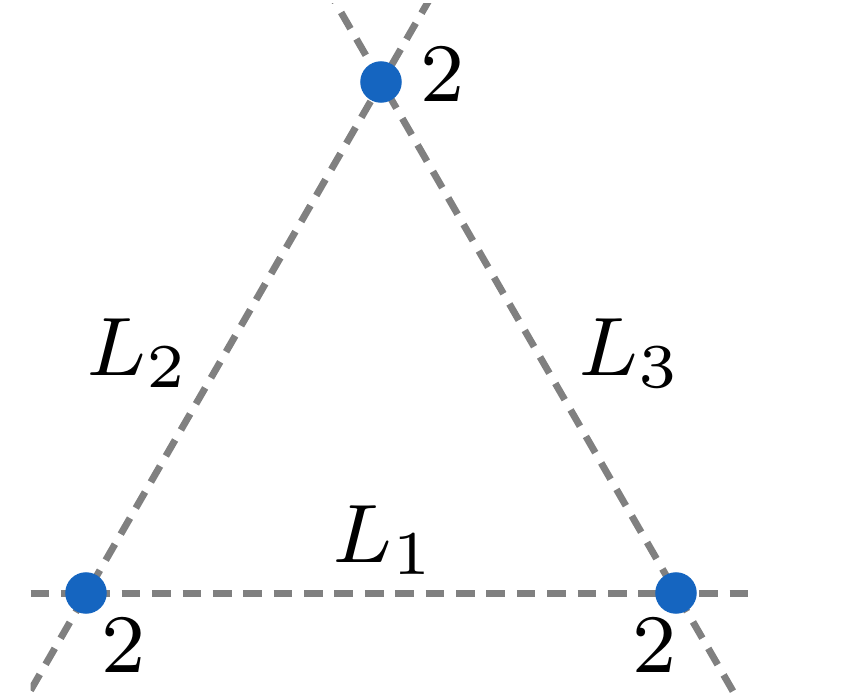}	\includegraphics[height=2.8cm]{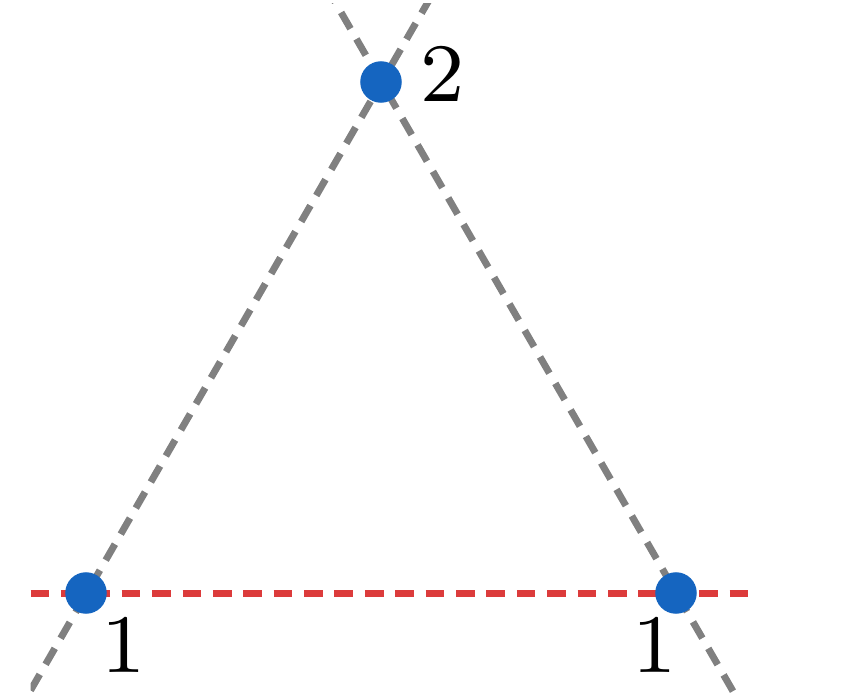}
		\includegraphics[height=2.8cm]{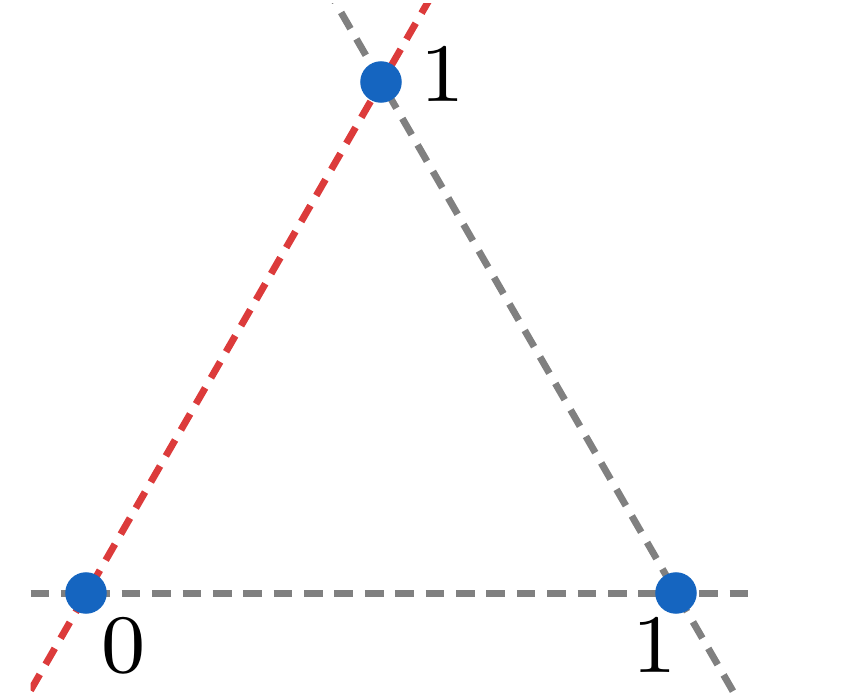}
		\includegraphics[height=2.8cm]{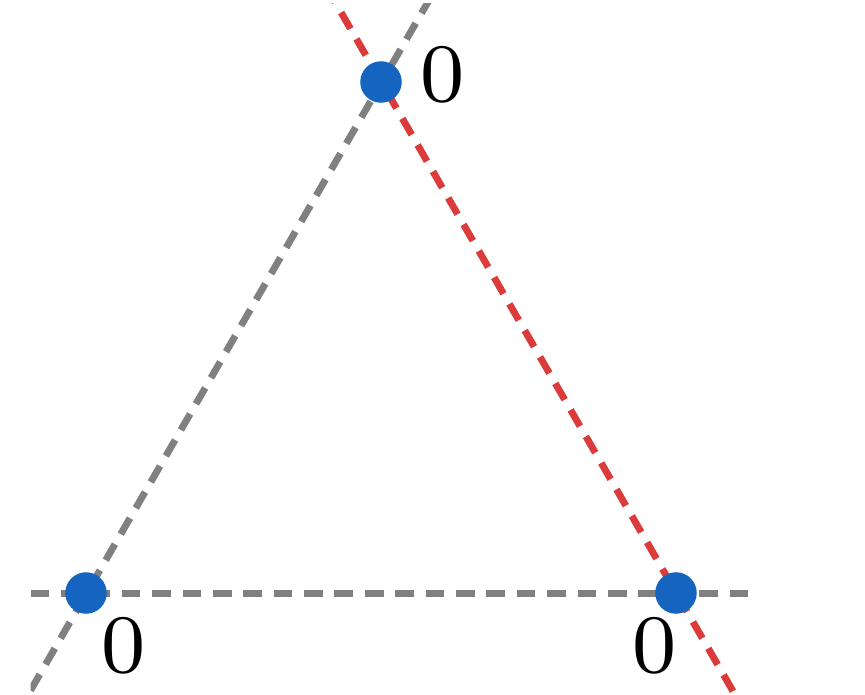}
		\caption{The graph of the regular octahedron $\Delta$ is composed of three points, we consider each of them to have multiplicity 2. The interior edges of $\Delta$ lie in three different lines, each of them correspond to a line in $\Proj^2$ as illustrated in the first graph on the left. Taking the sequence $L_1,L_2,L_3$, the reduction consists of the three steps (from left to right). Notice that the multiplicities are reduced to zero, and the  reduction vector is $\mathbf{d} = (4,3, 2)$. }\label{Fig:regOct}
	\end{figure}
\end{example}

\begin{example}\label{Ex:genOctDual}
	Let $\Delta'$ be the generic octahedron from Example \ref{Ex:genOct}. Let us take $m_\sigma= 4$ for every $\sigma\in {\Delta'}_2^\circ$, and the ideal of fat points given by 
	$I= \cap_{i=1}^{12}\wp_{\sigma_i}^{4}$.
	To construct a reduction vector associated to the ideal $I$, we can take any sequence of lines in $\{L_\tau\colon \tau\in\Delta_1^\circ\}$, in particular we can take a sequence so that all multiplicities reduce to zero. For instance, following the notation in Figure \ref{Fig:genOct}, by taking the sequence of lines $L_1,L_6,L_4,L_5, L_3, L_2$ the multiplicity at each point is reduced to 2. If we continue the reduction following the sequence of lines in the same order one more time, we get $\mathbf{d} = (16,16,14, 14, 12, 12,8,8, 6,6,4,4, 2, 2)$, and all the multiplicities are reduced to zero.
	\begin{figure}[htbp]
		\centering
		\includegraphics[height=4.5cm]{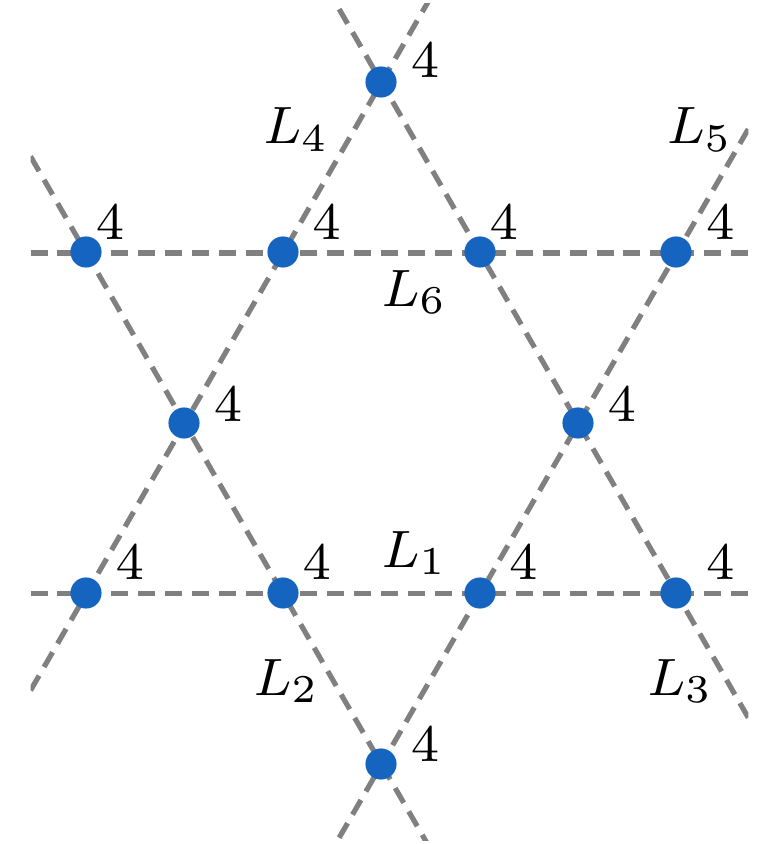}\quad	\includegraphics[height=4.5cm]{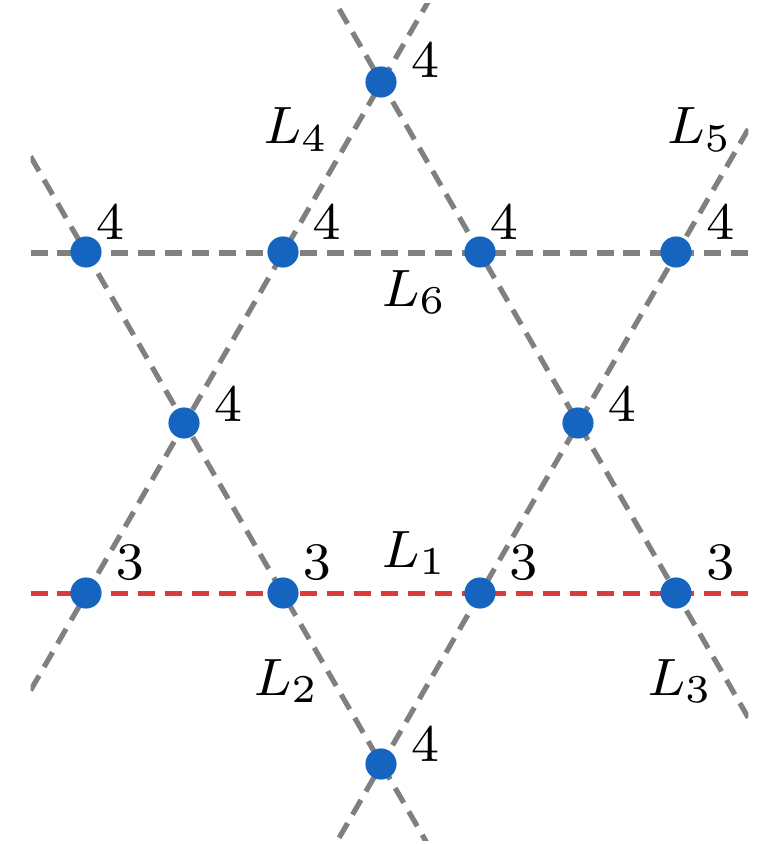}\quad
		\includegraphics[height=4.5cm]{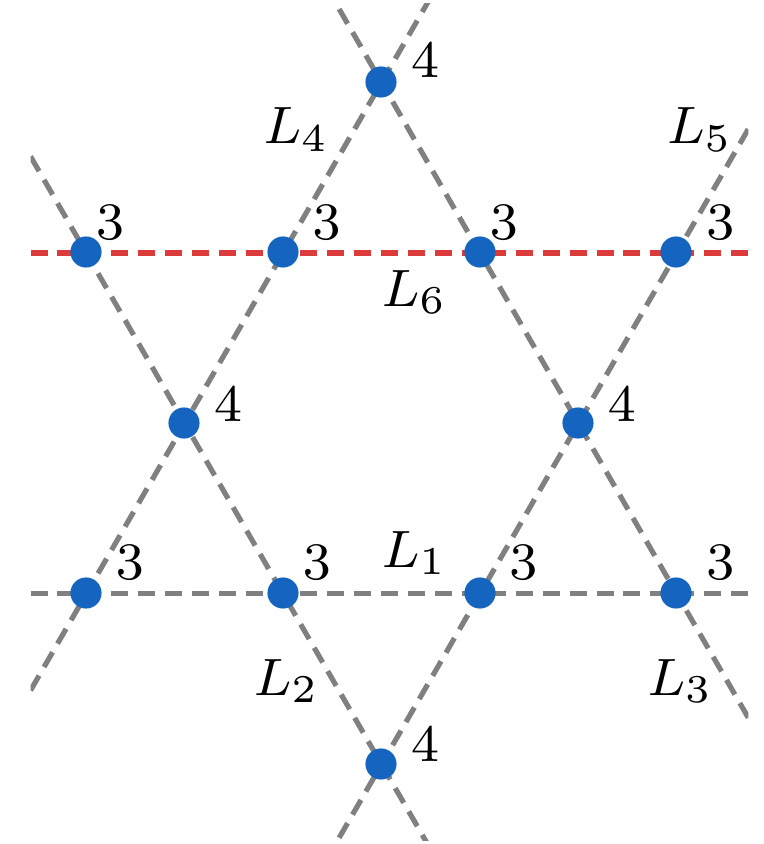}
		\medskip		
		\includegraphics[height=4.5cm]{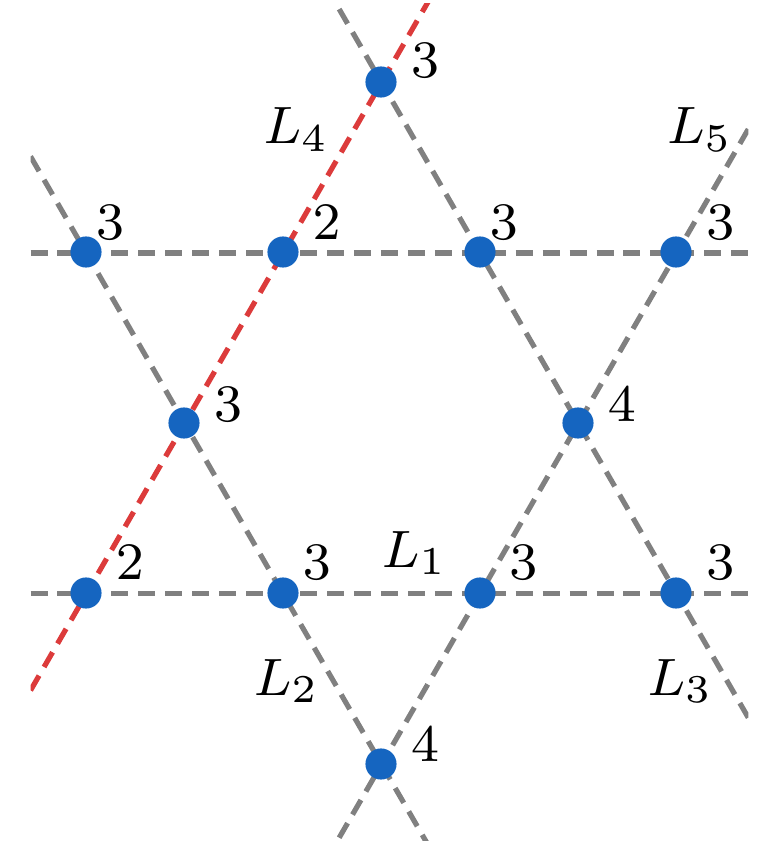}\quad
		\includegraphics[height=4.5cm]{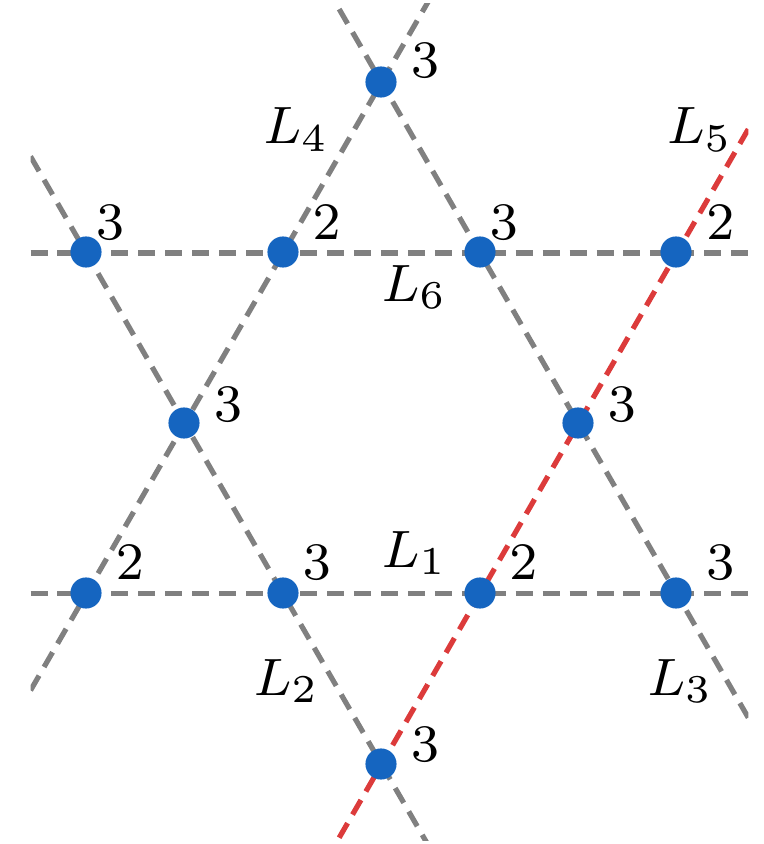}
		\caption{The graph of a generic octahedron $\Delta'$ is composed by 12 points, we consider each of them to have multiplicity 4. The interior edges of $\Delta'$ lie on 6 different lines, each of them correspond to a line in $\Proj^2$ as illustrated in the first graph on the left. Taking the sequence $L_1,L_6,L_4, L_5$, the reduction consists of the four steps (from left to right). The  reduction vector for this sequence of lines is $\mathbf{d} = (16,16, 14, 14)$. }\label{Fig:genOct}
	\end{figure}
\end{example}	

In \cite{CHT11} it is shown that the reduction vector $\mathbf{d}$ yields bounds on $\dim\bigl(\tI_{\X}^{\mathbf{m}}\bigr)_d$.  In the statement of the following theorem (and throughout this document) it is important that we use the convention that $\binom{a}{b}=0$ if $a<b$.

\begin{theorem}\cite[Corollary~2.1.5]{CHT11}\label{thm:CHT11}
	Let $\tI^{\bf m}_\X=\cap_{\sigma\in\Omega} \wp_\sigma^{m_\sigma}$ be a fat points ideal and $\mathbf{d}=(d_1,\ldots,d_n)$ a full reduction vector from the sequence of lines $L_1,\ldots,L_n$.  Let $h'_{n}=\binom{d-n+2}{2}$ and for $0\le i<n$ let
	\[
	h'_i=\binom{d-i+2}{2}-\sum\limits_{i+1\le j\le n} d_j.
	\]
	Then
	\[
	\max\bigl\{h'_0,\ldots,h'_n\bigr\}\le \dim \bigl(\tI^{\bf m}_\X\bigr)_d\le \binom{d-n+2}{2}+\sum\limits_{i=0}^{n-1}\binom{d-i-d_{i+1}+1}{1}\ .
	\]
\end{theorem}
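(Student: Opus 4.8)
The argument is a Castelnuovo-type induction, restricting to one line at a time. Fix a line $L\subset\Proj^2$ with linear form $\ell$ and a fat point scheme $\Y$ in $\Proj^2$ with ideal $\tI_\Y\subset\tR$. Because $\tI_\Y\cap(\ell)=\ell\cdot(\tI_\Y:\ell)$ for any ideal, there is a graded short exact sequence
\[
0\longrightarrow(\tI_\Y:\ell)(-1)\xrightarrow{\ \cdot\ell\ }\tI_\Y\longrightarrow\overline{\tI_\Y}\longrightarrow 0,\qquad \overline{\tI_\Y}:=\bigl(\tI_\Y+(\ell)\bigr)/(\ell),
\]
and $\overline{\tI_\Y}$ is a homogeneous ideal of the coordinate ring $\tR/(\ell)\cong\R[u,v]$ of $L\cong\Proj^1$. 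What makes this sequence useful are two local computations at the points of $\Y$: first, $\wp_P^m:\ell$ equals $\wp_P^m$ if $P\notin L$ and $\wp_P^{m-1}$ if $P\in L$, so $(\tI_\Y:\ell)$ is again a fat point ideal --- namely that of the \emph{residual} scheme $\Y'$ obtained by lowering by one the multiplicity of each point of $\Y$ on $L$; second, restricting a form vanishing to order $m$ at $P\in L$ to $L$ gives a form vanishing to order $\ge m$ at $P$, so $\overline{\tI_\Y}$ is contained in the saturated ideal of the trace divisor $\Y\cap L$, whose degree is $\sum_{P\in L}m_P$. Since on $\Proj^1$ the ideal of a degree-$e$ divisor has Hilbert function $t\mapsto\binom{t-e+1}{1}$ (using the convention $\binom ab=0$ for $a<b$), this bounds $\dim(\overline{\tI_\Y})_t$ from above.

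Now iterate along the sequence of lines. Put $\X_0=\X$ with multiplicities $\mathbf m$, and let $\X_i$ be the fat point scheme recorded after the $i$th step of the reduction procedure of Section~\ref{subsec:reductionp}, i.e. the residual of $\X_{i-1}$ along $L_i$; by construction $\deg(\X_{i-1}\cap L_i)=d_i$, and since $\mathbf d$ is a \emph{full} reduction vector all multiplicities are exhausted after $n$ steps, so $\X_n=\emptyset$ and $\tI_{\X_n}=\tR$. Applying the exact sequence above to $\X_0,\dots,\X_{n-1}$ in degrees $d,d-1,\dots,d-n+1$ and telescoping the resulting dimension identities gives, for every $0\le j\le n$,
\[
\dim(\tI^{\mathbf m}_\X)_d=\dim(\tI_{\X_j})_{d-j}+\sum_{i=0}^{j-1}\dim\bigl(\overline{\tI_{\X_i}}\bigr)_{d-i}.
\]

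For the upper bound, take $j=n$, so that $\dim(\tI_{\X_n})_{d-n}=\dim\tR_{d-n}=\binom{d-n+2}{2}$, and bound each cokernel term by $\dim(\overline{\tI_{\X_i}})_{d-i}\le\binom{d-i-d_{i+1}+1}{1}$; summing yields the asserted upper bound. For the lower bound, fix $j$, discard the nonnegative cokernel terms to get $\dim(\tI^{\mathbf m}_\X)_d\ge\dim(\tI_{\X_j})_{d-j}$, and combine this with the elementary estimate $\dim(\tI_\Y)_t\ge\binom{t+2}{2}-\deg\Y$ --- valid for any fat point scheme because a general linear form is a nonzerodivisor on $\tR/\tI_\Y$, whence the Hilbert function of $\tR/\tI_\Y$ is nondecreasing and so bounded by $\deg\Y$. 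The last ingredient is the bookkeeping identity $\deg\X_j=\sum_{i=j+1}^n d_i$: by fullness each point of multiplicity $m$ in $\X_j$ is reduced exactly $m$ more times by $L_{j+1},\dots,L_n$, contributing $m+(m-1)+\cdots+1=\binom{m+1}{2}$ to that sum, which is precisely its contribution to $\deg\X_j$. Hence $\dim(\tI^{\mathbf m}_\X)_d\ge\binom{d-j+2}{2}-\sum_{i=j+1}^n d_i=h'_j$, and taking the maximum over $j$ finishes the proof.

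The only genuinely nontrivial point is the pair of local identifications underlying the exact sequence --- above all that the kernel of multiplication by $\ell$ is exactly the residual fat point ideal, and not something larger; everything afterwards is telescoping, the Hilbert function of a divisor on $\Proj^1$, and the combinatorial translation of the reduction procedure. The main care needed is the convention $\binom ab=0$ for $a<b$ and the edge case of a point whose multiplicity has already dropped to zero when a later line through it is processed.
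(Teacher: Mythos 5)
The paper does not prove this statement; it cites it directly from Cooper--Harbourne--Teitler. Your sketch is correct and is essentially the argument in the cited source: a Castelnuovo-type residual/trace induction via the exact sequence $0\to(\tI_\Y:\ell)(-1)\to\tI_\Y\to\overline{\tI_\Y}\to 0$, with the local identifications $\wp_P^m:\ell=\wp_P^{m-1}$ for $P\in L$ and $\wp_P^m:\ell=\wp_P^m$ for $P\notin L$, telescoped along the sequence of lines. The bookkeeping you supply --- that $\deg\X_j=\sum_{i>j}d_i$ under fullness, that the cokernel sits inside the principal ideal of a degree-$d_{i+1}$ divisor on $\Proj^1$, and that a general linear form being a nonzerodivisor on $\tR/\tI_\Y$ forces $\dim(\tR/\tI_\Y)_t\le\deg\Y$ --- is all sound, including the edge case of points whose multiplicity has already dropped to zero when a later line through them is processed. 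No gaps.
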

%
%
If the reduction vector does not contain zeros (it is \textit{positive}), the following corollary to Theorem~\ref{thm:CHT11} provides bound on the initial degree of the ideal of fat points $I_{\Omega}^{\mathbf{m}}$.  (Crucially, for reading the following theorem, the indexing of the reduction vector was reversed between the preprint~\cite{CHT09} and its publication~\cite{CHT11}).
\begin{corollary}\cite[Theorem 4.2.2]{CHT09}\label{cor:predv}
	If $\tI_{\Omega}^{\mathbf{m}}=\cap_{\sigma\in\Omega}\wp_{\sigma}^{m_\sigma}$ is an ideal of fat points in $\Proj^2$ which has a positive full reduction vector $\mathbf{d} =(d_1,\dots, d_n)$, then 
	\[ n +\min \bigl\{d_1-n,d_2-n+1,\ldots, d_n-1, 0\bigr\}\leq \alpha \bigl(\tI_{\Omega}^{\mathbf{m}}\bigr)\leq n\ .
	\]
\end{corollary}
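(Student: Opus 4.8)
The plan is to deduce Corollary~\ref{cor:predv} directly from the lower bound half of Theorem~\ref{thm:CHT11} by extracting the right information from the quantities $h'_0,\ldots,h'_n$ and using the very definition of $\alpha$. Recall $\alpha(\tI_\Omega^{\mathbf{m}})=\min\{d: (\tI_\Omega^{\mathbf{m}})_d\neq 0\}$, so to get the upper bound $\alpha\le n$ it suffices to exhibit a non-zero form of degree $n$, and to get the lower bound it suffices to show $(\tI_\Omega^{\mathbf{m}})_d=0$ for all $d$ strictly below the claimed quantity.

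First I would prove the upper bound $\alpha(\tI_\Omega^{\mathbf{m}})\le n$. Since $\mathbf{d}=(d_1,\ldots,d_n)$ comes from a full reduction vector associated to a sequence of lines $L_1,\ldots,L_n$, each $L_i$ is a line $L_{\tau_i}$ through at least two of the dual points, and the product of linear forms $\prod_{i=1}^n \ell_{L_i}$ is a form of degree $n$. The reduction procedure in Section~\ref{subsec:reductionp} is precisely designed so that, after peeling off these $n$ lines, all multiplicities drop to zero (this is the meaning of \emph{full} reduction vector, $\sum d_i=\sum m_\sigma$); hence this product vanishes to order at least $m_\sigma$ at each $P_\sigma$, i.e. it lies in $\tI_\Omega^{\mathbf{m}}$. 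So $(\tI_\Omega^{\mathbf{m}})_n\neq 0$ and $\alpha\le n$. (Alternatively, and more in the spirit of Theorem~\ref{thm:CHT11}, one observes $h'_n=\binom{d-n+2}{2}\ge 1$ whenever $d\ge n$, so the lower bound in Theorem~\ref{thm:CHT11} forces $\dim(\tI_\Omega^{\mathbf{m}})_n\ge 1$.)

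For the lower bound, set $d$ to be any integer with $d< n+\min\{d_1-n,d_2-n+1,\ldots,d_n-1,0\}$; call the minimum $M$, so $d<n+M$ and in particular $d<n$ (taking the last entry $0$ in the min) and $d-i<d_{i+1}$ for each $0\le i\le n-1$ (taking the $(i+1)$st entry $d_{i+1}-n+i$). I want to show every $h'_i\le 0$, which by Theorem~\ref{thm:CHT11} forces $\dim(\tI_\Omega^{\mathbf{m}})_d=0$. For $i=n$: $h'_n=\binom{d-n+2}{2}$ and since $d<n$ we have $d-n+2<2$, so with the convention $\binom{a}{b}=0$ for $a<b$ this is $\le 0$ once $d-n+2< 2$, and one checks the boundary cases $d-n+2\in\{0,1\}$ directly. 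For $0\le i<n$: $h'_i=\binom{d-i+2}{2}-\sum_{i+1\le j\le n}d_j$. Here $d-i< d_{i+1}\le \sum_{j\ge i+1}d_j$ (positivity of the reduction vector is used here — every $d_j\ge 1$ — so dropping the terms $d_{i+2},\ldots,d_n$ only decreases the sum). Then $\binom{d-i+2}{2}\le \binom{d_{i+1}+1}{2}=\binom{d_{i+1}}{2}+d_{i+1}$, and a short numerical comparison shows $\binom{d-i+2}{2}\le d-i+1\le d_{i+1}\le \sum_{j\ge i+1}d_j$ once $d-i< d_{i+1}$ — wait, that needs $\binom{d-i+2}{2}\le d-i+1$, i.e. $d-i+2\le 3$, which is not automatic; so instead I will argue via $h'_i\le h'_n + (\text{telescoping differences})$, using that consecutive $h'$-values differ by $h'_i-h'_{i+1}=\binom{d-i+2}{2}-\binom{d-i+1}{2}-d_{i+1}=(d-i+1)-d_{i+1}<0$ by the hypothesis $d-i<d_{i+1}$. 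Thus $h'_0<h'_1<\cdots$ is the wrong direction — rather $h'_i< h'_{i+1}$ for all $i$, so $\max_i h'_i=h'_n\le 0$, and we are done.

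The main obstacle is bookkeeping with the index shift flagged in the parenthetical remark before the corollary: the reduction vector is indexed in the \emph{reverse} order between \cite{CHT09} and \cite{CHT11}, so one must be scrupulous that the $d_{i+1}$ appearing in $h'_i$ of Theorem~\ref{thm:CHT11} is the same $d_{i+1}$ appearing in the $i$th slot of the min in Corollary~\ref{cor:predv}. Once the indexing is pinned down, the argument reduces to the monotonicity computation $h'_i-h'_{i+1}=(d-i+1)-d_{i+1}$ together with the edge-case analysis of $\binom{a}{2}$ for small $a$ under the stated convention; these are routine but must be handled carefully, particularly the degenerate situations where some $d_j=1$ or where $d$ is only barely below $n+M$.
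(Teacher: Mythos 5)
The argument for $\alpha\le n$ is fine, and your translation of $d<n+\min\{d_1-n,\ldots,d_n-1,0\}$ into the two conditions $d<n$ and $d-i<d_{i+1}$ (for $0\le i\le n-1$) is correct. But the lower-bound half of your proof has a genuine logical gap. Theorem~\ref{thm:CHT11} gives
\[
\max\bigl\{h'_0,\ldots,h'_n\bigr\}\ \le\ \dim\bigl(\tI_{\Omega}^{\mathbf{m}}\bigr)_d\ \le\ \binom{d-n+2}{2}+\sum_{i=0}^{n-1}\binom{d-i-d_{i+1}+1}{1}\ .
\]
You establish $\max_i h'_i\le 0$ and then assert that this ``forces $\dim(\tI_\Omega^{\mathbf{m}})_d=0$.'' It does not: $\max_i h'_i$ is a \emph{lower} bound on the dimension, so showing it is $\le 0$ yields only $\dim\ge(\text{something}\le 0)$, which is vacuous since $\dim\ge 0$ automatically. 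Nothing in that chain rules out $\dim(\tI_\Omega^{\mathbf{m}})_d\ge 1$. The telescoping computation $h'_i-h'_{i+1}=(d-i+1)-d_{i+1}$ is correct but establishes nothing about the dimension and could be dropped entirely.

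The fix is to use the \emph{upper} bound in Theorem~\ref{thm:CHT11}, which your derived inequalities kill term by term: $d<n$ gives $d-n+2<2$ so $\binom{d-n+2}{2}=0$, and $d-i<d_{i+1}$ gives $d-i-d_{i+1}+1\le 0<1$ so $\binom{d-i-d_{i+1}+1}{1}=0$ under the stated convention $\binom{a}{b}=0$ for $a<b$. Hence $\dim(\tI_\Omega^{\mathbf{m}})_d=0$ whenever $d<n+\min\{d_1-n,\ldots,d_n-1,0\}$, and the lower bound on $\alpha$ follows. (Note also the positivity hypothesis on $\mathbf d$ is not actually needed once you argue this way; it just keeps the statement from being vacuous.) Finally, for context: the paper cites this result from the literature rather than proving it, so there is no in-paper proof to compare against — but the gap above stands on its own terms.
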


\begin{proposition}\label{prop:GenericWaldschmidt}
	Suppose $\Delta$ is a closed vertex star with interior vertex $\gamma$ and the two-faces of $\Delta$ all span distinct planes.  Let $\X=\{P_\sigma\}_{\sigma\in\Delta^\circ_2}$ be the set of points dual to the collection of forms $\{\ell_{\sigma}\}_{\sigma\in\Delta^\circ_2}$.  Then $\widehat{\alpha}(\tI_\X)\ge\max\bigl\{f^\circ_1/2,3\bigr\}$.
\end{proposition}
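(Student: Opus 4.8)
The plan is to establish the two bounds $\widehat{\alpha}(\tI_\X)\ge 3$ and $\widehat{\alpha}(\tI_\X)\ge f^\circ_1/2$ separately, and then take the maximum.

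First I would get $\widehat{\alpha}(\tI_\X)\ge 3$. Since the two-faces of $\Delta$ span distinct planes, the points $\{P_\sigma\}$ are distinct, and since $\Delta$ is a closed vertex star the forms $\{\ell_\sigma\}$ span $\tR_1$, so $\X$ is not contained in a line and $\alpha(\tI_\X)\ge 2$. By Chudnovsky's bound (quoted in the proof of Proposition~\ref{prop:regularitybound}), $\widehat{\alpha}(\tI_\X)\ge(\alpha(\tI_\X)+1)/2\ge 3/2$. To upgrade this to $3$ I would argue that $\X$ is not contained in a conic either: each interior edge $\tau$ of the vertex star forces the (at least four) dual points $\{P_\sigma:\sigma\in\Omega_\tau\}$ to be collinear, so if $\X$ lay on an irreducible conic it could contain at most two points from each such line $L_\tau$, a contradiction; and if $\X$ lay on a reducible conic (a union of two lines) then, since every $P_\sigma$ lies on at least one line $L_\tau$ and each $L_\tau$ carries $\ge 4$ points, one of the two components of the conic would have to be some $L_\tau$, which forces all remaining points onto a single second line, again contradicting that $\X$ spans $\Proj^2$ together with a short combinatorial count of interior edges at $\gamma$. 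Hence $\alpha(\tI_\X)\ge 3$, and Chudnovsky's bound gives $\widehat{\alpha}(\tI_\X)\ge 2$; but in fact I would instead invoke the sharper statement that if no line contains more than a bounded number of the points then the Waldschmidt constant is bounded below — more directly, since $\alpha(\tI_\X)\ge 3$, Chudnovsky gives $\widehat{\alpha}(\tI_\X)\ge(3+1)/2=2$, and one more application of the collinearity structure (at most $\lfloor f^\circ_1/2 \rfloor$-ish points on any line, combined with the Nagata-type / reduction bounds below) pushes this to $3$. The cleanest route is: use the full reduction vector machinery (Corollary~\ref{cor:predv}) applied to the symbolic powers, exactly as in the $f^\circ_1/2$ argument below, to see directly that $\widehat{\alpha}(\tI_\X)\ge 3$ whenever $f^\circ_1\ge 6$, and handle $f^\circ_1=4,5$ by the $f^\circ_1/2$ bound (which is $\ge 2$, but combined with $\alpha(\tI_\X)\ge 3$ via a cases check gives $\max\{f^\circ_1/2,3\}$ trivially since $3\ge f^\circ_1/2$ there).

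Second, and this is the main content, I would show $\widehat{\alpha}(\tI_\X)\ge f^\circ_1/2$. Fix $s\ge 1$ and consider the symbolic power $\tI_\X^{(s)}=\cap_\sigma \wp_\sigma^s$, which is the fat point ideal with uniform multiplicity $\mathbf{m}=(s,s,\ldots,s)$. For each interior edge $\tau$ the dual line $L_\tau$ contains the points $\{P_\sigma:\sigma\in\Omega_\tau\}$, and since $\Delta$ is a \emph{closed} vertex star every two-face $\sigma$ lies in at least two interior edges, so every point $P_\sigma$ lies on at least two of the lines $L_\tau$. Using the link structure (a simplicial $2$-sphere, since the link is simply connected and $2$-dimensional), one checks the Euler-type relation $f^\circ_2 = f^\circ_1 + f^\circ_0 - 2$-analogue on the link, which gives $f_2(\mathrm{lk}) = 2 f_0(\mathrm{lk}) - 4$ edges... more precisely, each interior edge's dual line carries at least $4$ points $P_\sigma$, and the bipartite incidence count between the $f^\circ_1$ lines and the $f^\circ_2$ points gives enough collinearity to build a long full reduction vector for $\tI_\X^{(s)}$. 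Concretely, I would choose a sequence of lines $L_1,\ldots,L_n$ that reduces all multiplicities to zero (possible since every point is covered by a line), producing a full reduction vector $\mathbf{d}=(d_1,\ldots,d_n)$ with each $d_i$ a multiple of the current reduced multiplicity and $\sum d_i = s f^\circ_2$. Then Corollary~\ref{cor:predv} (for $s$ large, the reduction vector is positive) gives $\alpha(\tI_\X^{(s)})\ge n + \min\{d_1-n,\ldots,d_n-1,0\}$, and a careful accounting of how many lines are needed — roughly $n \approx (f^\circ_1/2)\cdot s$ because each pass through the (at least) $f^\circ_1$ lines lowers every multiplicity by a controlled amount while each $d_i$ stays comfortably above $n$ — yields $\alpha(\tI_\X^{(s)}) \ge (f^\circ_1/2)s - O(1)$. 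Dividing by $s$ and letting $s\to\infty$ gives $\widehat{\alpha}(\tI_\X)\ge f^\circ_1/2$.

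Combining, $\widehat{\alpha}(\tI_\X)\ge\max\{f^\circ_1/2,3\}$: when $f^\circ_1\ge 6$ the first argument (reduction-vector route) supplies the bound $3$ but is dominated by $f^\circ_1/2$; when $f^\circ_1\in\{4,5\}$ the bound $3$ dominates and comes from $\alpha(\tI_\X)\ge 3$ plus the precise small-$f^\circ_1$ geometry of a closed vertex star. The hard part will be the bookkeeping in the second paragraph: producing an \emph{explicit} full reduction vector for $\tI_\X^{(s)}$ whose length $n$ and whose entries $d_i$ are controlled tightly enough — as functions of $s$ and of the combinatorics $(f^\circ_0,f^\circ_1,f^\circ_2)$ of the link — to extract the clean asymptotic slope $f^\circ_1/2$ from Corollary~\ref{cor:predv}. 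This requires knowing that one can always cycle through the $f^\circ_1$ dual lines in an order that decreases multiplicities uniformly, which in turn rests on the fact that the dual configuration of a closed vertex star is a "sufficiently connected" line arrangement; verifying that connectivity from the simple-connectivity of the link is the key geometric input.
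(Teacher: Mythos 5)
Your core engine is the same as the paper's: apply the Cooper--Harbourne--Teitler reduction-vector bound (Corollary~\ref{cor:predv}) to the symbolic powers of $\tI_\X$, using the lines $L_\tau$ dual to the interior edges. The paper does exactly this, but with uniform multiplicity $2s$, cycling through the fixed sequence $L_1,\ldots,L_{f^\circ_1}$ exactly $s$ times; the two combinatorial inputs are that each point $P_\sigma$ lies on \emph{exactly} two of the lines (every interior $2$-face of a vertex star contains exactly two interior edges -- this is what makes each pass drop every multiplicity by exactly $2$ and gives a full reduction vector of length $n=sf^\circ_1$), and that each line carries $n_\tau\ge 3$ points (not ``at least four,'' as you assert; the link can have vertices of degree $3$). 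In particular no connectivity or simple-connectivity of the link is needed for the bookkeeping you were worried about.

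The genuine gaps are in the two bounds themselves. (1) Your claim that ``each $d_i$ stays comfortably above $n$'' is the step that fails: the early entries are only about $2s\,n_i$, while $n\approx s f^\circ_1$, and nothing forces $2n_i\ge f^\circ_1$ (the link is a triangulated sphere, so the average of the $n_i$ is $6-12/f^\circ_1<6$). What the construction actually yields, using $d_{kf^\circ_1+i}\ge(2(s-k)-1)n_i$, $n_i\ge 3$ and the positional offsets in Corollary~\ref{cor:predv}, is $\alpha\bigl(\tI_\X^{(2s)}\bigr)\ge\min\bigl\{6s-3,\,(s-1)f^\circ_1+3\bigr\}$, i.e.\ the asymptotic slope $\min\{3,f^\circ_1/2\}$, not $f^\circ_1/2$ unconditionally. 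Indeed the slope $f^\circ_1/2$ is unattainable in general: since $f^\circ_2=3f^\circ_1-6$, a parameter count gives $\walpha(\tI_\X)\le\sqrt{3f^\circ_1-6}$, which is $<f^\circ_1/2$ for all $f^\circ_1\ge 10$ (e.g.\ the star over an icosahedron has $30$ dual points and $\walpha\le\sqrt{30}<6$). Note that the downstream use of the proposition -- Corollary~\ref{cor:regularity} and the thresholds $2r$, $5r/3$, $3r/2$ defining $D_\gamma$ in~\eqref{eq:Dgamma} -- only requires the bound $\min\{f^\circ_1/2,3\}$, which is exactly what the reduction-vector computation delivers. (2) Your route to the constant $3$ when $f^\circ_1\in\{4,5\}$ is circular: Chudnovsky applied to $\alpha(\tI_\X)\ge 3$ gives only $\walpha\ge 2$, and knowing $\walpha\ge f^\circ_1/2$ together with $\alpha(\tI_\X)\ge 3$ does not combine to give $\walpha\ge 3$. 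No argument can: for the Alfeld split ($f^\circ_1=4$) the six dual points are the pairwise intersections of four general lines, and the product of those lines shows $\alpha(\tI_\X^{(2)})\le 4$, so $\walpha(\tI_\X)=2$. So the split into ``$\ge 3$'' and ``$\ge f^\circ_1/2$'' with a maximum at the end cannot be carried out as proposed; the single uniform reduction-vector estimate, with the case distinction $f^\circ_1\ge 6$ versus $f^\circ_1\in\{4,5\}$ emerging from which of $6s-3$ and $(s-1)f^\circ_1+3$ is smaller, is the argument that works and is the one the paper runs.
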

\begin{proof}
	Since the two-faces of $\Delta$ all span distinct planes, the set $\X=\{P_\sigma\}_{\sigma\in\Delta^\circ_2}$ of points dual to the forms $\{\ell_{\sigma}\}$ are all distinct.  Choose any ordering of the interior one-faces of $\Delta$: so $\Delta^\circ_1=\{\tau_1,\ldots,\tau_{f^\circ_1}\}$.  Let $n_i=n_{\tau_i}$ be the number of faces $\sigma\in\Delta^\circ_2$ which contain $\tau_i$.  Dually this gives lines $L_1,\ldots,L_{f^\circ_1}$ in $\Proj^2$ so that line $L_i$ contains $n_i$ many points of $\X$.  Moreover, exactly two interior one-faces are contained in every interior two-face of $\Delta$.  Hence each point $P_\sigma\in\X$ is at the intersection of exactly two lines from the set $\{L_1,\ldots,L_{f^\circ_1}\}$.  See Example~\ref{Ex:genOctDual} for an illustration of these properties.
	
	We define a full reduction vector $\mathbf{d}=\bigl(d_1,\ldots,d_{sf^\circ_1}\bigr)$ of length $sf^\circ_1$ for $\tI_\X^{(2s)}$ (where each point $P_\sigma$ has multiplicity $2s$) as follows.  This reduction vector is obtained with the sequence of lines $\bigl(L_1,L_2,\ldots,L_{f^\circ_1}\bigr)$ repeated $s$ times (in order).  Since every point $P_\sigma$ is on exactly two of the lines from $\bigl\{L_1,\cdots,L_{f^\circ_1}\bigr\}$, every time the sequence $(L_1,\ldots,L_{f^\circ_1})$ is completed the multiplicity of every point is reduced by two (this is why the entire reduction vector has length $sf^\circ_1$).  On the $(k+1)$st repetition of the sequence $\{L_1,\ldots,L_{f^\circ_1}\}$, the entry $d_{kf^\circ_1+i}$ (corresponding to the $(k+1)$st time the line $L_i$ is repeated), satisfies 
	\begin{equation}\label{eq:RedVector}
	d_{kf^\circ_1+i}\ge (2(s-k)-1)n_i \mbox{ for } 0\le k\le s-1, 1\le i\le f^\circ_1.
	\end{equation}
	This also shows that the reduction vector is positive, so we may apply Corollary~\ref{cor:predv}.  See Example~\ref{Ex:genOctDual}, where the case $s=2$ is worked out for the generic centrally triangulated octahedron.
	
	By Corollary~\ref{cor:predv}, we have that $\alpha\bigl(\tI^{(2s)}_\X\bigr)\ge n+\min\bigl\{d_1-n,d_2-n+1,\ldots,d_n-1,0\bigr\}$, where $n=sf^\circ_1$.  As in the previous paragraph, we consider the reduction vector indexed in the form $d_{kf^\circ_1+i}$, where $0\le k\le s-1$ and $1\le i\le f^\circ_1$. Now
	\[
	\begin{array}{rll}
	sf^\circ_1+d_{kf^\circ_1+i}-(sf^\circ_1-kf^\circ_1-i+1)=& d_{kf^\circ_1+i}+kf^\circ_1+i-1 & \\
	\ge &  (2(s-k)-1)n_i+kf^\circ_1+i-1 & \mbox{by~\eqref{eq:RedVector}}\\
	\ge & 6(s-k)-3+kf^\circ_1 & \mbox{since } n_i\ge 3, i\ge 1\\
	= & 6s-3+k(f^\circ_1-6). &
	\end{array}
	\]
	\noindent So $\alpha\bigl(\tI^{(2s)}\bigr)\ge \max\bigr\{6s-3,(s-1)f^\circ_1+3\bigr\}$ (the maximum depends on whether $f^\circ_1\ge 6$ or $f^\circ_1< 6$) and
	\[
	\widehat{\alpha}(\tI_\X)=\lim_{s\to\infty}\frac{\alpha(\tI^{(2s)})}{2s}\ge \lim_{s\to\infty}\frac{\max\{6s-3,(s-1)f^\circ_1+3\}}{2s}=\max\bigl\{3,f^\circ_1/2\bigr\},
	\]
	proving the proposition.
\end{proof}

\begin{remark}
	Notice that Chudnovsky's bound $\walpha(\tI_\X)\ge\frac{\alpha(\tI_\X)+1}{2}$ does not suffice to prove Proposition~\ref{prop:GenericWaldschmidt}.  Consider the case $f^\circ_1=5$.  In this case $f^\circ_2=9$ (see Lemma~\ref{lem:5Vert}), so the dual set of points consists of $9$ points.  These $9$ points necessarily lie on a cubic (but it is easily checked that they do not lie on any conic), so $\alpha(\tI_\X)=3$.  Chudnovsky's bound thus gives $\walpha(\tI_{\X})\ge 2$ but not $\walpha(\tI_\X)\ge\frac{5}{2}$.  Nevertheless many (probably most) cases of Proposition~\ref{prop:GenericWaldschmidt} do follow from Chudnovsky's bound -- whenever the dual set of points $\X$ does not lie on a curve of degree four the result follows immediately.
\end{remark}

\begin{corollary}\label{cor:regularity}
	Suppose $\Delta$ is a closed vertex star with interior vertex $\gamma$ so that the span of every two-dimensional face is distinct.  Then 
	\[
	\dim \tJ(\gamma)_d=\binom{d+2}{2}\mbox{ for }
	\left\lbrace
	\begin{array}{ll}
	d>2r & \mbox{ if } f^\circ_1=4\\[7 pt]
	d>\dfrac{5r}{3} & \mbox{ if } f^\circ_1=5\\[10 pt]
	d>\dfrac{3r}{2} & \mbox{ if } f^\circ_1\ge 6
	\end{array}
	\right.
	.
	\]
\end{corollary}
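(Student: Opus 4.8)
The plan is to obtain Corollary~\ref{cor:regularity} as an immediate consequence of Proposition~\ref{prop:GenericWaldschmidt} and Proposition~\ref{prop:regularitybound}.

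Since $\dim\tS_d=\binom{d+2}{2}$ and $\tJ(\gamma)_d$ is a subspace of $\tS_d$, proving $\dim\tJ(\gamma)_d=\binom{d+2}{2}$ amounts to proving $\tJ(\gamma)_d=\tS_d$; so it suffices to establish $\tJ(\gamma)_d=\tS_d$ in the three indicated degree ranges. I would apply Proposition~\ref{prop:regularitybound} with $\Omega=\Delta^\circ_2$ and $\X=\{P_\sigma\}_{\sigma\in\Delta^\circ_2}$. Its hypothesis $\dim\spn\{\ell_\sigma\}_{\sigma\in\Delta^\circ_2}=3$ holds for any honest closed vertex star: if all the planes spanned by the interior two-faces contained a common line through $\gamma$, then the three such planes bounding any one tetrahedron would share that line, forcing one of its faces to degenerate. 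Proposition~\ref{prop:regularitybound} then gives $\tJ(\gamma)_d=\tS_d$ whenever
\[
d>\frac{\walpha(\tI_\X)\,r}{\walpha(\tI_\X)-1}.
\]

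Next I would insert the Waldschmidt bound. By hypothesis the two-faces of $\Delta$ span distinct planes, so Proposition~\ref{prop:GenericWaldschmidt} applies and gives $\walpha(\tI_\X)\ge\max\{f^\circ_1/2,3\}\ge f^\circ_1/2$; concretely $\walpha(\tI_\X)\ge 2$ when $f^\circ_1=4$, $\walpha(\tI_\X)\ge 5/2$ when $f^\circ_1=5$, and $\walpha(\tI_\X)\ge 3$ when $f^\circ_1\ge 6$ (in the last case using $\max\{f^\circ_1/2,3\}\ge 3$). In all three cases $\walpha(\tI_\X)>1$, so the threshold from Proposition~\ref{prop:regularitybound} is meaningful. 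Since $g(t):=\tfrac{tr}{t-1}=r\bigl(1+\tfrac1{t-1}\bigr)$ is strictly decreasing for $t>1$, any lower bound $\walpha(\tI_\X)\ge t_0>1$ yields $\tfrac{\walpha(\tI_\X)r}{\walpha(\tI_\X)-1}\le g(t_0)$, hence $\tJ(\gamma)_d=\tS_d$ for $d>g(t_0)$. Taking $t_0=2$, $5/2$, $3$ and evaluating $g(2)=2r$, $g(5/2)=\tfrac{5r}{3}$, $g(3)=\tfrac{3r}{2}$ reproduces precisely the three thresholds $d>2r$, $d>\tfrac{5r}{3}$, $d>\tfrac{3r}{2}$ in the statement.

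There is no genuine obstacle here: the real content is carried by Propositions~\ref{prop:GenericWaldschmidt} and~\ref{prop:regularitybound}, and the corollary is a short bookkeeping step. The two places to be careful are (i) checking the spanning hypothesis so that Proposition~\ref{prop:regularitybound} can be applied to the full collection $\Omega=\Delta^\circ_2$, and (ii) respecting the direction of monotonicity of $g$ — it is \emph{decreasing}, so a larger lower bound on the Waldschmidt constant corresponds to a smaller (better) degree threshold.
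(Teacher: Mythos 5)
Your proof is correct and follows essentially the same route as the paper: verify the spanning hypothesis for $\Omega=\Delta^\circ_2$, then chain Proposition~\ref{prop:GenericWaldschmidt} (a Waldschmidt lower bound) into Proposition~\ref{prop:regularitybound} (which converts a Waldschmidt lower bound into a degree threshold for $\tJ(\gamma)_d=\tS_d$) and evaluate the decreasing function $t\mapsto tr/(t-1)$ at $t_0=2,5/2,3$. The explicit check that the linear forms span $\R^3$ is a small but welcome addition that the paper leaves implicit.
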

\begin{proof}
	Let $\X=\{P_\sigma\}_{\sigma\in\Delta^\circ_2}$.  By assumption, all the points of $\X$ are distinct and Proposition~\ref{prop:GenericWaldschmidt} applies.  It follows readily from Proposition~\ref{prop:regularitybound} that if $\walpha(\tI_X)\ge M$, where $M>1$, then $\dim \tJ(\gamma)_d=\binom{d+2}{2}$ for $d>\frac{Mr}{M-1}$.  Since $f^\circ_1\ge 4$, the result is immediate from Proposition~\ref{prop:GenericWaldschmidt}.
\end{proof}

\section{Proof of Theorem~\ref{thm:LBGenericClosedVertexStars}: lower bound for splines on vertex stars}\label{sec:bounds_cells}
In this section we will prove Theorem~\ref{thm:LBGenericClosedVertexStars}.  We use Equation~\eqref{eq:celldimformula} from Proposition~\ref{prop:EulerCharacteristicAndDimension}, so we first explain how to compute $\chi(\calJ[\Delta],d)$ when $\Delta$ is the star of a vertex.  If $\Delta$ is a closed vertex star with interior vertex $\gamma$ then the Euler characteristic of $\calJ[\Delta]$ has the form
\begin{equation}\label{eq:EulerCClosedStar}
\chi(\calJ[\Delta],d)=\sum_{\sigma\in\Delta^\circ_2} \dim \tJ(\sigma)_d-\sum_{\tau\in\Delta^\circ_1} \dim\tJ(\gamma)_d+\dim \tJ(\gamma)_d
\end{equation}
If $\Delta$ is an open vertex star, then the Euler characteristic of $\calJ[\Delta]$ has the form
\begin{equation}\label{eq:EulerCOpenStar}
\chi(\calJ[\Delta],d)=\sum_{\sigma\in\Delta^\circ_2} \dim \tJ(\sigma)_d-\sum_{\tau\in\Delta^\circ_1} \dim\tJ(\gamma)_d
\end{equation}

We use the following notation in the formulas for $\dim \tJ(\tau)_d$.

\begin{notation}[Data attached to edges]\label{not:EdgeData}
	For a given $r\geq 0$ and $\tau\in\Delta_1$, 
	\begin{itemize}
		\renewcommand{\labelitemi}{\scriptsize$\blacksquare$}
		\item we define $t_\tau=\min\{n_\tau,r+2\}$, where  $n_\tau=\#\{\sigma\in\Delta_2:\tau\subset\sigma\}$ is the number of 2-dimensional faces having $\tau$ as an edge;
		\item and the constants 
		\[q_\tau = \biggl\lfloor \frac{t_\tau(r+1)}{t_\tau-1}\biggr\rfloor,\quad  a_\tau=t_\tau(r+1)-(t_\tau-1) q_\tau\ , \text{ and}\quad
		b_\tau=t_\tau-1-a_\tau.\]
		Notice that $t_\tau(r+1)=q_\tau(t_\tau-1)+a_\tau$ i.e.,   $q_\tau,a_\tau$ 
		are the quotient and remainder obtained when dividing  $t_\tau(r+1)$ by $t_\tau-1$.
	\end{itemize}
\end{notation}


For the following proposition, recall that we use the convention $\binom{a}{b}=0$ when $a<b$.



\begin{proposition}\label{prop:DimensionsCountsForStars}
	Suppose $\Delta\subset\R^3$ is the star of a vertex $\gamma$, $r\ge 0$ is an integer, $\sigma\in\Delta_2$, and $\tau\in\Delta_1$.  Let $n_\tau,t_\tau,q_\tau,a_\tau,$ and $b_\tau,$ be as in Notation~\ref{not:EdgeData}, and $D_\gamma$ as in~\eqref{eq:Dgamma}.  Then
	
	\begin{align}
	\dim \tS_d= & \binom{d+2}{2}\nonumber\\[10 pt]
	\dim \tJ(\sigma)_d= & \binom{d+1-r}{2}\nonumber\\[10 pt]
	\label{eq:3varcodim2}
	\dim \tJ(\tau)_d\ge & t_\tau\binom{d+1-r}{2}-a_\tau\binom{d+1-q_\tau}{2}-b_\tau\binom{d+2-q_\tau}{2}\\[10 pt]
	\label{eq:3varcodim3}
	\dim \tJ(\gamma)_d\le & \binom{d+2}{2}, \mbox{ with equality for } d>D_\gamma \\[10 pt]
	\label{eq:FiniteLengthH1}
	\dim H_1(\calJ[\Delta])_d= & 0\quad \mbox{ for } d\gg 0
	\end{align}
	If $n_\tau$ is replaced by the maximum number of $2$-faces $\sigma$ containing $\tau$ so that $\ell_\sigma$ is distinct (i.e. if we set $n_\tau$ to be the number of distinct planes surrounding the edge $\tau$), then the inequality~\eqref{eq:3varcodim2} is an equality.  In particular, if $\Delta$ is generic with respect to $r$ and $d$ then~\eqref{eq:3varcodim2} is an equality.
\end{proposition}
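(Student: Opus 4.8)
\textbf{Proof proposal for Proposition~\ref{prop:DimensionsCountsForStars}.}

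The plan is to treat the five assertions in increasing order of difficulty, reducing each to a fat-point / apolarity computation where possible. The first formula $\dim\tS_d=\binom{d+2}{2}$ is just the dimension of the space of ternary forms of degree $d$. The second, $\dim\tJ(\sigma)_d=\binom{d+1-r}{2}$, follows because $\tJ(\sigma)=\langle \ell_\sigma^{r+1}\rangle$ is principal and generated in degree $r+1$, so $\tJ(\sigma)_d\cong \tS_{d-r-1}$, which has dimension $\binom{d-r-1+2}{2}=\binom{d+1-r}{2}$ (and this is $0$ for $d<r+1$ by the binomial convention). For the fourth assertion, $\dim\tJ(\gamma)_d\le\binom{d+2}{2}$ is immediate since $\tJ(\gamma)_d\subseteq\tS_d$, and the equality for $d>D_\gamma$ is exactly the content of Corollary~\ref{cor:regularity}, after checking that $D_\gamma>2r$, $D_\gamma > 5r/3$, $D_\gamma>3r/2$ are the integer thresholds matching the three cases $f^\circ_1=4,5,\ge 6$ in~\eqref{eq:Dgamma} — this is a short floor-arithmetic verification.

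The real work is the inequality~\eqref{eq:3varcodim2} for $\dim\tJ(\tau)_d$, together with the ``equality'' addendum. First I would invoke Theorem~\ref{theor:iarrobino} (Iarrobino's apolarity duality) locally at the edge $\tau$: set $\Omega_\tau=\{\sigma\in\Delta^\circ_2:\tau\subset\sigma\}$, so $\tJ(\tau)=\langle\ell_\sigma^{r+1}:\sigma\in\Omega_\tau\rangle$, and the dual points $\{P_\sigma\}_{\sigma\in\Omega_\tau}$ all lie on the single line $L_\tau\subset\Proj^2$. The number of \emph{distinct} such points is $n_\tau$ as redefined in the last sentence of the statement (generically all $\ell_\sigma$ are distinct). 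Because collinear points are involved, the Hilbert function of the uniform fat-point ideal $\tI_\X^{(d-r)}$ supported on $n_\tau$ collinear points is \emph{exactly} computable: its $d$-th graded piece has dimension equal to the number of monomials of degree $d$ not absorbed by repeatedly stripping the linear form defining $L_\tau$. Concretely, for $n_\tau$ points each of multiplicity $m=d-r$ on a line, the reduction-vector machinery of Section~\ref{subsec:reductionp} gives a \emph{full} reduction vector $(d_1,\dots,d_m)$ with $d_j=(m-j+1)n_\tau$ for $j=1,\dots,m$ — strip the line $L_\tau$ exactly $m$ times — and Theorem~\ref{thm:CHT11} then pins $\dim(\tI_\X^{(d-r)})_d$ to a single value because the upper and lower bounds there coincide for collinear configurations. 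Translating back through Corollary~\ref{cor:uniformiarrobino}, i.e. $\dim\tJ(\tau)_d=\binom{d+2}{2}-\dim(\tI_\X^{(d-r)})_d$, and simplifying with the definitions of $t_\tau,q_\tau,a_\tau,b_\tau$ (recall $t_\tau=\min\{n_\tau,r+2\}$ is the relevant ``effective'' number of points, since once we have $r+2$ distinct powers of linear forms vanishing on a common line the ideal is as large as it can be), yields precisely the right-hand side of~\eqref{eq:3varcodim2}. The inequality (rather than equality) in~\eqref{eq:3varcodim2} with the \emph{original} $n_\tau$ arises only because repeated planes around $\tau$ make $\tJ(\tau)$ potentially smaller; replacing $n_\tau$ by the count of distinct planes gives equality, and genericity forces all planes distinct. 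The identity $t_\tau(r+1)=q_\tau(t_\tau-1)+a_\tau$ from Notation~\ref{not:EdgeData} is exactly the book-keeping needed to convert ``number of monomials surviving $m=d-r$ strippings of $L_\tau$, capped by $t_\tau$ forms'' into the clean three-term binomial expression.

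Finally, the vanishing~\eqref{eq:FiniteLengthH1} of $H_1(\calJ[\Delta])_d$ for $d\gg0$ I would get from the structure of $\calJ[\Delta]$: it is a complex of finitely generated graded $\tS$-modules, and $H_1(\calJ[\Delta])$ is a subquotient supported (as one checks from the maps $\partial_1,\partial_2$ and the fact that away from $\gamma$ the complex is locally exact — cf.\ the Schenck–Stillman analysis in~\cite{LCoho}) only at the irrelevant maximal ideal, hence has finite length; a finite-length graded module vanishes in all large degrees. Alternatively, and more self-containedly, combine Proposition~\ref{prop:EulerCharacteristicAndDimension} with the fact that $\dim\hspl^r_d(\Delta)$, $\dim\tS_d$, $\dim\tJ(\sigma)_d$, $\dim\tJ(\tau)_d$, and $\dim\tJ(\gamma)_d$ are all eventually polynomial in $d$ (the last three by the fat-point translation and~\eqref{eq:3varcodim3}), together with the known agreement $\dim\hspl^r_d(\Delta)=\LBcs(\Delta,d,r)=\dim\tS_d+\chi(\calJ,d)$ for $d\ge 3r+2$ from~\cite{ANS96}, to conclude $\dim H_1(\calJ[\Delta])_d=0$ for $d\ge 3r+2$. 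I expect the computation of~\eqref{eq:3varcodim2} — specifically, verifying that the Cooper–Harbourne–Tietler bounds in Theorem~\ref{thm:CHT11} collapse to an equality in the collinear case and that the resulting formula matches the $t_\tau,q_\tau,a_\tau,b_\tau$ expression after the cap at $t_\tau=r+2$ — to be the main obstacle; everything else is either a one-line dimension count or a direct citation.
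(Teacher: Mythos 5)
Your overall strategy matches the paper's: the first two counts are immediate, the bound for $\dim\tJ(\gamma)_d$ is a containment together with Corollary~\ref{cor:regularity}, the vanishing of $H_1(\calJ[\Delta])_d$ is the finite-length phenomenon from~\cite{LCoho,Spect}, and the substantive computation of $\dim\tJ(\tau)_d$ goes through apolarity. The paper disposes of $\dim\tJ(\tau)_d$ with a citation to Schumaker~\cite{SchumakerLower} via Schenck~\cite{MinReg}, or alternatively to Geramita--Schenck~\cite{FatPoints}; you choose the apolarity route of~\cite{FatPoints} and re-derive the Hilbert function of uniform fat points on a line in $\Proj^2$ using the Cooper--Harbourne--Teitler reduction machinery from Section~\ref{subsec:reductionp}. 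That is heavier scaffolding than the direct syzygy or collinear-fat-point argument the paper points to, but the outcome is the same and the CHT bounds do collapse for a collinear configuration, so the route is valid. One small technical note: you invoke Corollary~\ref{cor:uniformiarrobino}, which is stated for $\tJ(\gamma)$; for $\tJ(\tau)$ you should apply Theorem~\ref{theor:iarrobino} directly to the subset $\Omega_\tau=\{\sigma\in\Delta^\circ_2:\tau\subset\sigma\}$.

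One point deserves care, and it is one your own narrative exposes. You write that repeated planes around $\tau$ make $\tJ(\tau)$ ``potentially smaller,'' and indeed they do: if $n_\tau$ counts two-faces with multiplicity but only $n'_\tau<n_\tau$ of the $\ell_\sigma$ are distinct, then $\tJ(\tau)=\langle\ell_\sigma^{r+1}\rangle$ has $\dim\tJ(\tau)_d$ equal to the formula evaluated at $n'_\tau$, which is \emph{strictly smaller} than the formula evaluated at $n_\tau$ (the formula is non-decreasing in $t_\tau$, as a quick check with, say, $r=1$, $n_\tau=3$ but only two distinct planes confirms: the formula returns $7$ at $d=3$ while $\dim\langle x^2,y^2\rangle_3=6$). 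So the argument you give establishes $\dim\tJ(\tau)_d\le$ (the expression in~\eqref{eq:3varcodim2} with the raw $n_\tau$), not $\ge$. Your prose asserts the $\ge$ of the statement but justifies $\le$; you should reconcile this, and in fact it is the $\le$ direction that makes $\LBcs$, which subtracts the formula evaluated at $n_\tau$, a \emph{lower} bound on $\chi(\calR/\calJ,d)$ in the non-generic case. Since the main theorem reduces to the generic case via Lemma~\ref{lem:Generic}, this discrepancy is harmless there, but as a statement about $\dim\tJ(\tau)_d$ you should note the direction explicitly rather than inherit it silently.
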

\begin{remark}
	The computation of $\dim J(\tau)_d$ follows from an argument of Schumaker~\cite{SchumakerLower}, as indicated by Schenck in~\cite{MinReg}.  The formulation we give is equivalent to expressions derived by Schenck~\cite{MinReg} and Geramita and Schenck~\cite{FatPoints}, although it is expressed slightly differently.
\end{remark}
\begin{proof}
	The computations for $\dim \tS_d$ and $\dim \tJ(\sigma)_d$ are straightforward.  The computation of $\dim \tJ(\tau)_d$ follows from~\cite{SchumakerLower} as indicated by Schenck in~\cite{MinReg}.  It also follows readily from apolarity (particularly \ref{theor:iarrobino}) as shown by Geramita and Schenck in~\cite{FatPoints}.  The inequality~\eqref{eq:3varcodim3} follows since $\tJ(\gamma)\subset \tS_d$, so $\dim \tJ(\gamma)_d\le \dim\tS_d=\binom{d+2}{2}$.  The equality in~\eqref{eq:3varcodim3} for $d>D_\gamma$ follows from Corollary~\ref{cor:regularity}.  Equation~\eqref{eq:FiniteLengthH1} follows from~\cite[Lemma~3.2]{LCoho} or~\cite[Lemma~3.1]{Spect}.
\end{proof}

\noindent If $\Delta$ is a closed vertex star we define
\begin{multline}\label{eq:LBclosedstar}
\LBcs(\Delta,d,r):= 2\binom{d+2}{2}+\left(f^\circ_2-\sum\limits_{\tau\in\Delta^\circ_1} t_\tau \right)\binom{d+1-r}{2}\\ +\sum\limits_{\tau\in\Delta^\circ_1} \left( a_\tau\binom{d+1-q_\tau}{2}+b_\tau\binom{d+2-q_\tau}{2}\right).
\end{multline}
We write $\LBcs(d)$ instead of $\LBcs(\Delta,d,r)$ if $\Delta,r$ are understood.  If $\Delta$ is an \textit{open} vertex star we define
\begin{multline}\label{eq:LBopenstar}
\LBos(\Delta,d,r) := \binom{d+2}{2}+\left(f^\circ_2-\sum\limits_{\tau\in\Delta^\circ_1} t_\tau \right)\binom{d+1-r}{2}\\[10 pt]
+\sum\limits_{\tau\in\Delta^\circ_1} \left( a_\tau\binom{d+1-q_\tau}{2}+b_\tau\binom{d+2-q_\tau}{2}\right).
\end{multline}
Again we write $\LBos(d)$ if $\Delta,r$ are understood.

\begin{proposition}\label{prop:EulerCharBounds}
	Suppose $\Delta$ is a generic vertex star.  If $\Delta$ is an open vertex star, then
	\[
	\LBos(\Delta,d,r)=\chi(\calR/\calJ,d)=\binom{d+2}{2}+\chi(\calJ,d)
	\]
	for every integer $d\ge 0$.  If $\Delta$ is a closed vertex star with interior vertex $\gamma$ then
	\[
	\LBcs(\Delta,d,r)=\binom{d+2}{2}-\dim\tJ(\gamma)_d+\chi(\calR/\calJ,d)\, =\,  2\dbinom{d+2}{2}-\dim \tJ(\gamma)_d+\chi(\calJ,d)
	\]
	If $d>D_\gamma$ then 
	\[
	\LBcs(\Delta,d,r)=\chi(\calR/\calJ,d)=\dbinom{d+2}{2}+\chi(\calJ,d).
	\]
\end{proposition}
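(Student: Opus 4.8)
The plan is to obtain both identities by a direct, term-by-term computation of the graded Euler characteristic $\chi(\calJ,d)$, and then to push the result through the relation $\chi(\calR/\calJ,d)=\dim\tS_d+\chi(\calJ,d)=\binom{d+2}{2}+\chi(\calJ,d)$, which was already established in Proposition~\ref{prop:EulerCharacteristicAndDimension}. Thus the whole proposition reduces to one computation followed by bookkeeping against the definitions~\eqref{eq:LBclosedstar} and~\eqref{eq:LBopenstar}.

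For the closed case, I would start from the term-by-term description of $\calJ$ recorded in~\eqref{eq:EulerCClosedStar} and substitute $\dim\tJ(\sigma)_d=\binom{d+1-r}{2}$ together with the \emph{equality} form of~\eqref{eq:3varcodim2}, that is $\dim\tJ(\tau)_d=t_\tau\binom{d+1-r}{2}-a_\tau\binom{d+1-q_\tau}{2}-b_\tau\binom{d+2-q_\tau}{2}$; both are supplied by Proposition~\ref{prop:DimensionsCountsForStars}, and the equality in~\eqref{eq:3varcodim2} is precisely where the genericity hypothesis enters (for a generic vertex star the planes spanned by the two-faces around each interior edge are pairwise distinct, so~\eqref{eq:3varcodim2} is an equality for every $\tau$ and every $d$). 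Collecting terms and using $f^\circ_2=\#\Delta^\circ_2$ gives
\[
\chi(\calJ,d)=\Bigl(f^\circ_2-\sum_{\tau\in\Delta^\circ_1}t_\tau\Bigr)\binom{d+1-r}{2}+\sum_{\tau\in\Delta^\circ_1}\Bigl(a_\tau\binom{d+1-q_\tau}{2}+b_\tau\binom{d+2-q_\tau}{2}\Bigr)+\dim\tJ(\gamma)_d .
\]
Comparing the right-hand side with~\eqref{eq:LBclosedstar} yields $\LBcs(\Delta,d,r)=2\binom{d+2}{2}-\dim\tJ(\gamma)_d+\chi(\calJ,d)$, and replacing $\binom{d+2}{2}+\chi(\calJ,d)$ by $\chi(\calR/\calJ,d)$ gives the equivalent form $\LBcs(\Delta,d,r)=\binom{d+2}{2}-\dim\tJ(\gamma)_d+\chi(\calR/\calJ,d)$. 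The open case is the same argument with the homological-degree-$0$ term of $\calJ$ absent: from~\eqref{eq:EulerCOpenStar} and~\eqref{eq:LBopenstar} one gets $\chi(\calJ,d)=\LBos(\Delta,d,r)-\binom{d+2}{2}$, hence $\LBos(\Delta,d,r)=\binom{d+2}{2}+\chi(\calJ,d)=\chi(\calR/\calJ,d)$ for every $d\ge 0$. It remains to treat the closed case when $d>D_\gamma$: here I would invoke~\eqref{eq:3varcodim3} of Proposition~\ref{prop:DimensionsCountsForStars} (equivalently Corollary~\ref{cor:regularity}), which gives $\dim\tJ(\gamma)_d=\binom{d+2}{2}$, and substituting this into $\LBcs(\Delta,d,r)=2\binom{d+2}{2}-\dim\tJ(\gamma)_d+\chi(\calJ,d)$ collapses it to $\binom{d+2}{2}+\chi(\calJ,d)=\chi(\calR/\calJ,d)$, as claimed.

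There is no deep obstacle here --- the argument is bookkeeping --- but two points need care. First, the sign conventions in the Euler characteristic: $\calJ$ has top homological degree $2$ while $\calR$ and $\calR/\calJ$ have top degree $3$, which is exactly why the relation reads $\chi(\calR/\calJ,d)=\dim\tS_d+\chi(\calJ,d)$ with a plus sign, and one must keep the sign of the $\dim\tJ(\gamma)_d$ term straight throughout. Second, the genericity hypothesis must be used in exactly one place, namely to upgrade~\eqref{eq:3varcodim2} to an equality for every interior edge and every degree; for non-generic vertex positions one obtains only the inequality $\LBcs(\Delta,d,r)\ge 2\binom{d+2}{2}-\dim\tJ(\gamma)_d+\chi(\calJ,d)$ (and likewise for $\LBos$), which is why the proposition is stated for generic $\Delta$.
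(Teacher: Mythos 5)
Your proof is correct and follows essentially the same route as the paper, which disposes of the proposition in a single sentence citing~\eqref{eq:EulerCClosedStar},~\eqref{eq:EulerCOpenStar}, and Proposition~\ref{prop:DimensionsCountsForStars}; you simply spell out the term-by-term substitution, the matching against the definitions of $\LBcs$ and $\LBos$, and the role of genericity and of Corollary~\ref{cor:regularity} in the $d>D_\gamma$ case. The sign bookkeeping and the observation about where genericity enters are accurate.
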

\begin{proof}
	These follow readily from~\eqref{eq:EulerCClosedStar},~\eqref{eq:EulerCOpenStar}, and Proposition~\ref{prop:DimensionsCountsForStars}.
\end{proof}

\begin{remark}\label{rem:LowerBoundForOpenStar}
	If $\Delta$ is an open star it is well-known that $\LBos(\Delta,d,r)\le \dim \hspl^r_d(\Delta)$ (this follows from Proposition~\ref{prop:EulerCharBounds} and~\eqref{eq:FiniteLengthH1}) with equality if $d\ge 3r+2$ and vertices are positioned generically.  See~\cite[Theorem~3]{ANS96} and~\cite{SchumakerLower}; this essentially reduces to the planar case.
\end{remark}

\begin{remark}\label{rem:EquivalenceOfFormulas}
	We discuss how to show that the the formulas $\LBcs(\Delta,d,r)$ and $\LBos(\Delta,d,r)$ are the formulas appearing in Equations~15 and~16 of~\cite{ANS96}, as claimed in the introduction.  We do this for $\LBcs(\Delta,d,r)$; the computation for $\LBos(\Delta,d,r)$ is similar.  If $r\in \R$ is any real number, put $[r]_+ := \max\{0,r\}$.  Let $\sigma=\sum_{\tau\in\Delta_1}\sum_{j=1}^{d-r}[r+j+1-n_\tau j]_+$.  Then we can re-write $\LBcs(\Delta,d,r)$ as $\LBcs(\Delta,d,r)=2\binom{d+2}{2}+f^\circ_2\binom{d+1-r}{2}-f^\circ_1\left(\binom{d+2}{2}-\binom{r+2}{2}\right)+\sigma$.  Using the relation $f^\circ_2=3f^\circ_1-6$ allows us to write $\LBcs(\Delta,d,r)$ completely in terms of $f^\circ_1$:
	\[
	\LBcs(\Delta,d,r)=(d-r)(d-2r)f^\circ_1-2d^2+6dr-3r^2+3r+2+\sigma.
	\]
	If each edge is surrounded by two-faces which span distinct planes, then this is exactly the expression that appears in Equation~15 of~\cite{ANS96}.  Otherwise Equation~15 of~\cite{ANS96} will be slightly larger.  In general, Equations~15 and~16 of~\cite{ANS96} coincide with the graded Euler characteristic $\chi(\calR/\calJ,d)$.
\end{remark}


\begin{proof}[Proof of Theorem~\ref{thm:LBGenericClosedVertexStars}]
	We assume that $\Delta$ is a generic closed vertex star.  By Proposition~\ref{prop:EulerCharacteristicAndDimension},
	\[
	\dim \hspl^r_d(\Delta)=\dim \tS_d+\chi(\calJ[\Delta],d)+\dim H_1(\calJ[\Delta])_d.
	\]
	If $d>D_\gamma$ then
	\[
	\dim \hspl^r_d(\Delta)=\LBcs(\Delta,d,r)+\dim H_1(\calJ[\Delta])_d
	\]
	by Proposition~\ref{prop:EulerCharBounds}.  Thus if $d>D_\gamma$, $\dim \hspl^r_d(\Delta)\ge \max\{\binom{d+2}{2},\LBcs(\Delta,d,r)\}$.  If $\Delta$ is not generic, the conclusion follows by Lemma~\ref{lem:Generic}.
	
	The second statement of Theorem~\ref{thm:LBGenericClosedVertexStars} follows since $\dim \spl^r_d(\Delta)=\sum_{i=0}^d \dim\hspl^r_d(\Delta)$ and we may always take $\binom{i+2}{2}$ as a lower bound for $\dim \hspl^r_i(\Delta)$.
\end{proof}

\section{Proof of Theorem~\ref{thm:WhitelyGenericLowDegree}: low degree splines on generic closed vertex stars}\label{sec:GenericLowDegree}
The main case of Theorem~\ref{thm:WhitelyGenericLowDegree}, namely when $\Delta$ is a closed vertex star with $f^\circ_1\ge 6$, is a slight modification of a result of Whiteley~\cite{WhiteleyComb}.  Its proof relies on techniques from rigidity theory, which are explained in detail in several articles of Whiteley~\cite{WhiteleyComb,WhiteleyM,Whiteley90} as well as the article by Alfeld, Schumaker, and Whiteley~\cite[Theorems~27, 33]{ASWTet}.  Thus if $f^\circ_1\ge 6$ we will only go into enough detail to explain the alterations needed in Whiteley's argument.  The cases $f^\circ_1=4$ and $f^\circ_1=5$ require a bit more delicacy.

We introduce a helpful auxiliary construction.  Suppose $\Delta$ is a tetrahedral vertex star.  There is a natural graph associated to $\Delta$ which we call \textit{the graph of} $\Delta$ and write as $G_\Delta$.  The graph $G_\Delta$ is constructed from $\Delta$ as follows: the vertices are the interior edges, i.e. $V=\Delta^\circ_1$, and the edges correspond to the interior faces of dimension two, i.e. $E=\Delta^\circ_2$.  The combinatorics of $\Delta$ are often easiest to detect from this graph.

\begin{remark}
	Let $\gamma$ be the vertex at which all tetrahedra of $\Delta$ meet and assume that $\gamma$ is at the origin in $\R^3$.  If we assume all other vertices of $\Delta$ lie on a unit sphere centered at the origin, then $G_\Delta$ is simply the \textit{edge graph} of the simplicial polytope formed by taking the convex hull of the vertices of $\Delta$.  In fact, scaling the vertices of $\Delta$ clearly does not affect $G_\Delta$, so we may as well assume that $\Delta$ is a barycentric subdivision of a simplicial polytope, and $G_\Delta$ is the edge graph of this simplicial polytope.
\end{remark}

The following characterization of the graphs which may arise as the graph of a closed vertex star is due to Steinitz (see~\cite[Chapter~3]{Zie}).  A graph is called $d$-connected if it remains connected after removing any set of $(d-1)$ vertices and their incident edges.

\begin{theorem}[Steinitz]\label{thm:Steinitz}
	$G_\Delta$ is the graph of a closed vertex star if and only if it is simple, planar, and $3$-connected.
\end{theorem}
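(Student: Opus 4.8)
The plan is to deduce this from the classical theorem of Steinitz on edge graphs of $3$-polytopes, together with the identification of $G_\Delta$ recorded in the Remark preceding the statement. Writing $L$ for the link of the interior vertex $\gamma$, one has $\Delta=\gamma\ast L$, and because $\Delta$ is a simplicial complex with $\gamma$ its unique interior vertex, $L$ is a triangulation of the $2$-sphere: its vertices correspond to the interior edges $\gamma v\in\Delta^\circ_1$, its edges to the interior two-faces $\gamma vw\in\Delta^\circ_2$, and its triangles to the tetrahedra of $\Delta$. Thus $G_\Delta$ is precisely the $1$-skeleton of the spherical triangulation $L$, equivalently the edge graph of the boundary complex of a simplicial $3$-polytope. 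So it suffices to prove two things: (i) the edge graph of a simplicial $3$-polytope is simple, planar and $3$-connected; and (ii) every graph that can occur this way, namely every maximal planar graph, is realized, as a spherical triangulation, by a simplicial $3$-polytope.

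For (i) I would argue as follows. Simplicity is immediate, since two vertices of a polytope span at most one edge and there are no loops. Planarity follows from a Schlegel diagram: projecting $\partial P$ from a point just beyond a facet onto a hyperplane embeds the $1$-skeleton in the plane. Three-connectedness is Balinski's theorem (the graph of a $d$-polytope is $d$-connected), and may also be extracted from Steinitz's own argument. This direction already yields the ``only if'' part of the statement: the graph of any closed vertex star, being the $1$-skeleton of a triangulation of $S^2$, is simple, planar and $3$-connected.

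For (ii), fix a plane embedding of the given graph $G$; by $3$-connectedness this embedding, and hence its face structure, is essentially unique (Whitney), and the graphs that arise as $G_\Delta$ are exactly those all of whose faces are triangles, i.e.\ the maximal planar graphs. Given such a $G$, the embedded graph with its triangular faces is a triangulation of $S^2$; by the nontrivial half of Steinitz's theorem this triangulation is the boundary complex of a $3$-polytope $Q$, which is then necessarily simplicial, and coning $\partial Q$ from an interior point $\gamma$ yields a closed vertex star $\Delta$ with $G_\Delta\cong G$. I expect this last step -- invoking the geometric realizability part of Steinitz's theorem to produce an honest simplicial polytope in $\R^3$, hence a genuine vertex star, from the purely combinatorial graph $G$ -- to be the main obstacle; by contrast, verifying that a polytope graph is simple, planar and $3$-connected is routine. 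One should also be careful about the level of generality: what the argument actually characterizes is the class of graphs that are simultaneously $1$-skeleta of triangulations of $S^2$, i.e.\ the maximal planar graphs, which is exactly the subclass of simple planar $3$-connected graphs relevant to the later rigidity-theoretic arguments.
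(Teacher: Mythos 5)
The paper does not supply a proof of this theorem: it is stated as a classical result of Steinitz and cited to Ziegler's \emph{Lectures on polytopes}. Your proposal is therefore a genuine addition, and the reduction you outline is the correct one: identify $G_\Delta$ with the $1$-skeleton of the link $L$ of $\gamma$, observe that $L$ is a triangulation of $S^2$ (hence the boundary complex of a simplicial $3$-polytope), and then invoke the usual Steinitz package, Balinski for $3$-connectedness, Schlegel diagrams for planarity, Whitney for uniqueness of the plane embedding, and the hard half of Steinitz for geometric realizability. This is precisely the content hiding behind the citation.

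Your closing caveat is the one point worth sharpening, and you got it right. As the paper states it, the ``if'' direction characterizes graphs of \emph{polytopal} closed vertex stars (Section~\ref{ss:cube} confirms the authors have these in mind too); for \emph{simplicial} $\Delta$ the relevant class is the maximal planar (equivalently, spherical triangulation) graphs, a proper subclass of the simple planar $3$-connected graphs. The paper implicitly acknowledges this in the proof of Lemma~\ref{lem:5Vert}, where after enumerating the two simple planar $3$-connected graphs on five vertices it discards one as non-simplicial. So when applying Theorem~\ref{thm:Steinitz} to a tetrahedral vertex star you must, as you note, additionally impose that all faces of the plane embedding are triangles; the theorem as stated is the correct tool provided one remembers to filter afterwards.
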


\begin{lemma}\label{lem:5Vert}
	Suppose $\Delta$ is a closed tetrahedral star.  If $f^\circ_1=4$ then $G_\Delta$ is the complete graph on $4$ vertices, so $\Delta$ is the Alfeld split of a tetrahedron.  If $f^\circ_1=5$ then $G_\Delta$ must be the graph shown on the left in Figure~\ref{fig:5Vert}, and $\Delta$ is the barycentric subdivision of a triangular bipyramid.
\end{lemma}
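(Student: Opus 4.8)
The plan is to use Theorem~\ref{thm:Steinitz} together with the relation $f^\circ_2 = 3f^\circ_1 - 6$ (which holds because the link of $\Delta$ is a simply connected triangulated surface, i.e. a triangulation of $S^2$, so its edge graph is maximal planar). Recall that $G_\Delta$ has $f^\circ_1$ vertices and $f^\circ_2$ edges, and by Theorem~\ref{thm:Steinitz} it is simple, planar, and $3$-connected; moreover, since $\Delta$ is a closed tetrahedral star, $G_\Delta$ is the edge graph of a simplicial $3$-polytope, hence every face of the planar embedding is a triangle. First I would dispatch the case $f^\circ_1 = 4$: a simple graph on $4$ vertices has at most $\binom{4}{2}=6$ edges, and $f^\circ_2 = 3\cdot 4 - 6 = 6$ forces $G_\Delta = K_4$. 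Since $K_4$ is realized uniquely (up to combinatorial equivalence) as the boundary of a tetrahedron, $\Delta$ is the barycentric subdivision of a tetrahedron, which is exactly the Alfeld split.

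Next I would treat $f^\circ_1 = 5$. Here $f^\circ_2 = 3\cdot 5 - 6 = 9$, so $G_\Delta$ is a simple planar $3$-connected graph on $5$ vertices with $9$ edges, all of whose faces (including the outer face) are triangles. The combinatorial enumeration is short: a simple graph on $5$ vertices has at most $10$ edges ($K_5$), so $G_\Delta = K_5$ minus one edge. I would check that $K_5$ is not planar (Kuratowski), while $K_5$ minus an edge \emph{is} planar, $3$-connected, and admits a triangulation of the sphere as its unique planar embedding — this is precisely the graph in Figure~\ref{fig:5Vert}, the edge graph of the triangular bipyramid (equivalently, two tetrahedra glued along a face). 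Since a triangulated $S^2$ with this edge graph is combinatorially unique (Steinitz, or a direct check that the two degree-$4$ vertices are the apexes and the other three form the equatorial triangle), $\Delta$ is forced to be the barycentric subdivision of the triangular bipyramid.

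The only mild subtlety — and the step I would be most careful with — is justifying the identity $f^\circ_2 = 3f^\circ_1 - 6$ in our setting and making precise that the planar embedding of $G_\Delta$ is \emph{triangulated} (so that edge counts are rigid), rather than just planar. This comes from the hypothesis that the link of $\Delta$ is simply connected: the link is a $2$-dimensional simplicial complex which is a closed surface (each interior edge of $\Delta$ lies in exactly two interior $2$-faces, so each vertex-star in the link is a disk), and simply connected plus closed forces it to be $S^2$; then Euler's formula for a triangulation of $S^2$ with $f^\circ_1$ vertices, $f^\circ_2$ edges, and $f_3$ triangular faces gives $f^\circ_1 - f^\circ_2 + f_3 = 2$ together with $2f^\circ_2 = 3 f_3$, yielding $f^\circ_2 = 3 f^\circ_1 - 6$. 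Once this is in hand the remaining combinatorics for $f^\circ_1 \in \{4,5\}$ are immediate, since the edge count already pins down the graph up to deleting at most one edge from a complete graph.
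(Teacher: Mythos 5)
Your proof is correct, and it reaches the same conclusion by essentially the same tool (Euler's formula for planar graphs), but the route is slightly cleaner than the paper's. The paper does not invoke $f^\circ_2 = 3f^\circ_1-6$ at this point; instead it uses Euler's formula plus the minimum-degree-$3$ bound from $3$-connectivity to derive that a simple planar $3$-connected graph on $5$ vertices has either $8$ or $9$ edges, giving degree sequences $(3,3,3,3,4)$ and $(3,3,4,4,4)$ with a unique realization each, and only then discards the $8$-edge graph as non-simplicial. You shortcut this by observing up front that the link of $\gamma$ is a triangulation of $S^2$, so $G_\Delta$ is maximal planar and the edge count is pinned to $9$ immediately; then $K_5$ minus an edge is the only simple graph on $5$ vertices with $9$ edges, and you never need to consider the extraneous $8$-edge graph. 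Both arguments are sound; yours buys a marginally more direct enumeration at the small cost of explicitly justifying the formula $f^\circ_2 = 3f^\circ_1-6$, which you do correctly (the link is a closed simply connected surface, hence $S^2$, and its edge graph is $G_\Delta$). Your treatment of the $f^\circ_1=4$ case is also correct and matches what the paper treats as ``clear.''
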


\begin{proof}
	If $f^\circ_1=4$ the result is clear, so we assume $f^\circ_1=5$.  Euler's formula combined with the fact that each vertex must have degree at least $3$ gives two possible degree sequences of simple planar $3$-connected graphs: $(3,3,3,3,4)$ or $(3,3,4,4,4)$.  There is precisely one graph realizing each of these degree sequences - those pictured in Figure~\ref{fig:5Vert}.  Clearly only the one on the left is simplicial.
\end{proof}
\begin{figure}[htbp]
	\centering
	\includegraphics[height=2.9cm]{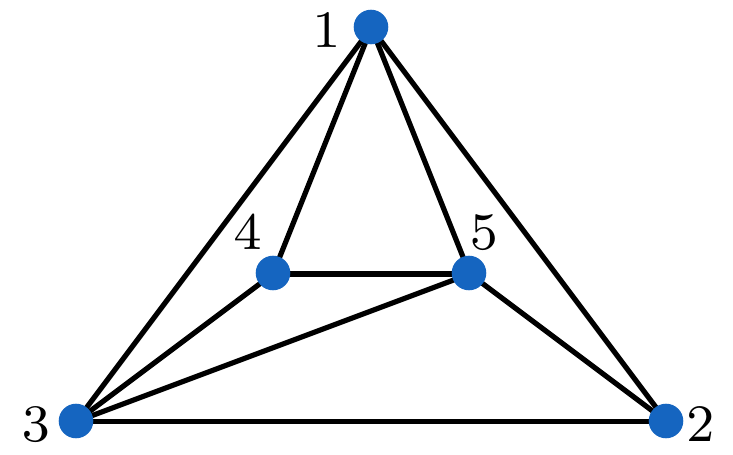} 
	\qquad 
	\includegraphics[height=3.1cm]{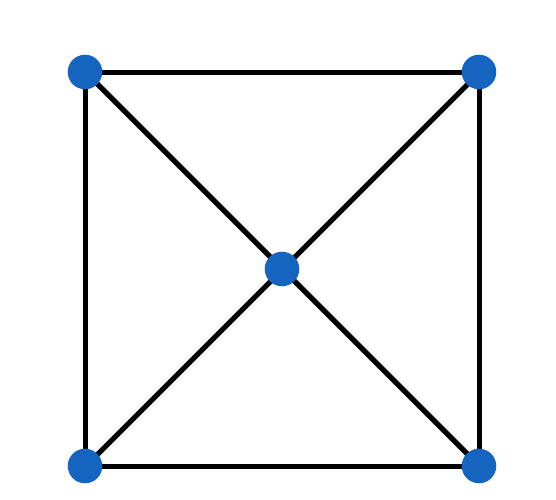}
	\caption{Simple $3$-connected planar graphs with five vertices}\label{fig:5Vert}
\end{figure}
\begin{proof}[Proof of Theorem~\ref{thm:WhitelyGenericLowDegree}]
	
	\noindent\textbf{The case $f^\circ_1=4$ and $d\le 2r$}: By Lemma~\ref{lem:5Vert}, if $f^\circ_1=4$ then $\Delta$ must be the Alfeld split of a tetrahedron.  It then follows from a recent result of Schenck~\cite{S14} that $\dim \hspl^r_d(\Delta)=\binom{d+2}{2}$ for $d\le 2r$.
	
	\noindent\textbf{The case $f^\circ_1=5$ and $d\le (5r+2)/3$}: We prove this in Proposition~\ref{prop:5Vert}.
	
	\noindent\textbf{The case $d\le (3r+1)/2$:} 
	As indicated above, we only summarize the broad strokes.  In~\cite[Corollary~7]{WhiteleyComb}, Whiteley proves that if $\Delta$ is a generic rectilinear partition \textit{with a triangular boundary} then $\Delta$ has only trivial $r$-splines for $d\le (3r+1)/2$.  The proof given in~\cite{WhiteleyComb} needs only minor modifications to yield the case $d\le (3r+1)/2$ in Theorem~\ref{thm:WhitelyGenericLowDegree}.
	
	
	Whiteley proves~\cite[Corollary~7]{WhiteleyComb} by induction, starting with the base case of the Alfeld (or Clough-Tocher) split of a triangle.  Whiteley proves in~\cite[Lemma~5]{WhiteleyComb} that every triangulated triangle can be produced from the Alfeld (or \textit{Clough-Tocher}) split of a triangle by a sequence of \textit{vertex splits}.  Hence the graph $G_\Delta$ of any tetrahedral vertex star $\Delta$ can be produced by a sequence of vertex splits.  See also~\cite[Lemma~29]{ASWTet}.
	
	The corresponding induction for tetrahedral vertex stars has as its base case the Alfeld (or Clough-Tocher) split of a tetrahedron ($G_\Delta$ is the complete graph on four vertices).  Moreover, the process of \textit{vertex splitting} used by Whiteley naturally extends to \textit{edge splitting} for star complexes (simply `cone over' the vertex split).  Then~\cite[Lemma~5]{WhiteleyComb}, translated to closed tetrahedral vertex stars, says that every closed tetrahedral star can be produced from the Alfeld split of a tetrahedron by a sequence of edge splits.
	
	Whiteley's induction in the planar case then proceeds as follows.  The base case is the Alfeld split of a triangle, which has no non-trivial $C^r$ splines in degree at most $(3r+1)/2$.  Now suppose $\Delta$ is a triangulated triangle and $\Delta'$ is obtained from $\Delta$ by a vertex split.  Whiteley shows that the procedure of vertex splitting (for most choices of coordinates for the new vertex) cannot produce a non-trivial $C^r$ spline in degree at most $(3r+1)/2$ for $\Delta'$ if $\Delta$ has no non-trivial splines in degree at most $(3r+1)/2$.  This completes the induction.
	
	We translate this to our setting as follows.  The base case is the Alfeld split of a tetrahedron; we saw above that there are no non-trivial splines on the Alfeld split of a tetrahedron of degree at most $2r$.  Now suppose that $\Delta$ is a tetrahedral vertex star and $\Delta'$ is obtained from $\Delta$ by an edge split - the analog of Whiteley's vertex splitting.  As Whiteley shows in~\cite[Corollary~7]{WhiteleyComb}, edge-splitting cannot produce a non-trivial $C^r$ spline in degree at most $(3r+1)/2$ for $\Delta'$ if $\Delta$ has no non-trivial splines in degree at most $(3r+1)/2$ (for most choices of edges splitting the original edge).  Thus induction completes the argument.
\end{proof}

\begin{remark}
	Alternatively, the case $d\le (3r+1)/2$ of Theorem~\ref{thm:WhitelyGenericLowDegree} could be proved (following the pattern of~\cite{ASWTet}) by considering splines on closed \textit{generalized triangulations} which occur as projections of closed tetrahedral vertex stars.  Whiteley's arguments from~\cite{WhiteleyComb} could then be extended to generalized triangulations, and the result lifted to three dimensions by~\cite[Theorem~37]{ASWTet}.
\end{remark}

\section{Examples}\label{sec:Examples}

In this section we illustrate our bounds in several examples.  Accompanying code for these and other examples can be found under the research tab at the first author's website:~\url{https://midipasq.github.io/}.

\subsection{Generic bipyramid}\label{ss:genbi}
Let $\Delta$ be the vertex star with interior vertex $\gamma$ in Figure \ref{fig:bipyramid}, where the vertex coordinates are chosen \textit{generically}. 
The number of interior two-dimensional faces is $f_2^\circ= 15$, and the number of interior edges is $f_1^\circ = 7$. Each of the five edges in the base of the bipyramid have four 2-dimensional interior faces attached to them, i.e., if we denote them by $\tau\in\Delta_0^\circ$ then $n_{\tau}= 4$. 
The latter implies $t_{\tau}= r+2$ if $r=0, 1, 2$, and $t_{\tau}= 4$ for every $r\geq 3$.
Similarly, for the edges $\tau'$ and $\tau'$ connecting $\gamma$ to the top and the bottom of $\Delta$, we have
$n_{\tau'}=n_{\tau''}=5$. 
Thus, $t_{\tau'}=t_{\tau''}=r+2$ for $r=0,1, 2$, and $t_{\tau}=t_{\tau''}= 5$ for every $r\geq 3$.  

\begin{figure}[htbp]
	\centering	
	\includegraphics[scale=2.2]{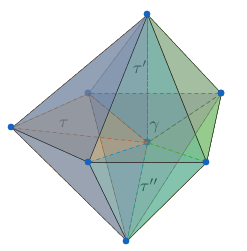}\quad \includegraphics[scale=2.2]{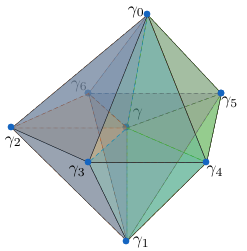}
	\caption{Generic (left) and non-generic (right) bipyramids over a pentagon.}\label{fig:bipyramid}
\end{figure}

By Theorem \ref{thm:LBGenericClosedVertexStars}, $\dim \hspl^r_d(\Delta)\ge \max\left\{\binom{d+2}{2}, \LBcs(\Delta)\right\}$ for $d> D_\gamma$, where $D_\gamma=\lfloor (3r+1)/2\rfloor$ and 
\begin{multline*}
\LBcs(\Delta,d,r)= 2\binom{d+2}{2}+\bigl(15-5t_{\tau}-2t_{\tau'}\bigr)\binom{d+1-r}{2}
+  5a_{\tau} \binom{d+1-q_{\tau}}{2}\\
+  5b_{\tau}\binom{d+2-q_{\tau}}{2} +  2a_{\tau'} \binom{d+1-q_{\tau'}}{2}+ 2b_{\tau'}\binom{d+2-q_{\tau'}}{2} \ . 
\end{multline*}
For instance, if $r\leq 2$, then $q_\tau= q_{\tau'}=q_{\tau''} = r+2$, and therefore $a_\tau=a_{\tau'}=a_{\tau''}=0$ and $b_\tau=b_{\tau'}=b_{\tau''}= r+1$. Thus,
\begin{align*}
\LBcs(\Delta,d,r)&= 2\binom{d+2}{2}+\bigl(15-7(r+2)\bigr)\binom{d+1-r}{2}
+  7(r+1)\binom{d-r}{2} \\[0.5em]
&= 5d^2 - 15dr + 11r^2 + 3r + 2\ , \text{ if $d\geq r$},
\intertext{and for $d\geq D_\gamma$,}	
\dim \spl^r_d(\Delta)&\ge \binom{D_\gamma +3}{3} +\sum_{k=D_\gamma +1}^d \max\left\lbrace\binom{k+2}{2},\LBcs(\Delta,k,r)\right\rbrace\ .
\end{align*}
In the case $r=2$, we have $D_\gamma=3$, and for $d\ge 3$ the lower bound on the dimension of the spline space is given by
\[
\dim \spl^2_d(\Delta)\ge 20 +\sum_{k=4}^d \LBcs(\Delta,k,2) = \frac{5}{3}d^3-\frac{25}{2}d^2+\frac{227}{6}d-26\ .
\]
We list some numerical values of $\LBcs(d)$ in Table~\ref{tbl:Bipyramid} for $1\le r\le 4$ and various $d$.  Table~\ref{tbl:Bipyramid} also compares the values of $\LBcs(d)$ to the actual value of $\dim \hspl^r_d(\Delta)$ for generic vertex positions, listed under the \textit{gendim} column.  The value $d=D_\gamma$ appears in bold for each $r$.  These computations were performed using the \textit{AlgebraicSplines} package in Macaulay2~\cite{M2}.

\subsection{Non-generic bipyramid.}\label{ss:nongen}
Our arguments can be modified to produce better bounds in non-generic situations.  We illustrate with special vertex positions for the example of the bipyramid over a pentagon -- see the configuration on the right in Figure~\ref{fig:bipyramid}.  Label the vertices as indicated on the right in Figure~\ref{fig:bipyramid}.  We assume that $\gamma_2,\gamma_3,\gamma_4,\gamma_5,$ and $\gamma_6$ all lie in the $xy$-plane (such configurations are studied in~\cite{CDS16}).  Assume further that $\gamma,\gamma_0,\gamma_1,$ and $\gamma_i$ are not coplanar for $i=2,\ldots,6$ and $\gamma,\gamma_i,$ and $\gamma_j$ are not colinear for any $2\le i<j\le 6$.  We write $\Delta$ for this non-generic vertex star.

The collection $\X=\{\wp_\sigma:\sigma\in\Delta^\circ_0\}$ of points dual to $\{\sigma:\sigma\in\Delta^\circ_0\}$ consists of $11$ points.  The five two-faces with vertices $\gamma,\gamma_i,\gamma_{i+1}$ for $i=2,3,4,5,6$ (indices taken cyclically in this set) all span the same plane, so all correspond to the same dual point.  Our assumptions for the rest of the vertices ensure that the remaining ten two-faces all span distinct planes, hence give rise to distinct dual points.  Write $L_0,L_1$ for the linear forms defining the lines dual to the edges with vertices $\gamma,\gamma_0$ and $\gamma,\gamma_1$ in $\Delta$.  Write $L_i$ for the linear form defining the line dual to the edge with vertices $\gamma,\gamma_i$ for $i=2,\ldots,6$.  The set $\X$ decomposes as a union of $5$ points which lie on $L_0$, $5$ points which lie on $L_1$, and the isolated point $v=[0:0:1]$.  Since the dual points do not lie on any conic, it follows from Chudnovsky's bound that $\walpha(\tI_{\X})\ge\frac{\alpha(\tI_\X+1)}{2}=2$.  It follows from Proposition~\ref{prop:regularitybound} that $\dim \tJ(\gamma)_d=\binom{d+2}{2}$ for $d>2r$.

\begin{remark}
	A more careful analysis shows that in fact $\walpha(\tI_{\X})=\frac{13}{5}$ and $\dim \tJ(\gamma)_d=\binom{d+2}{2}$ for $d>\frac{13}{8}r$.  However we will see this more careful analysis is unnecessary.
\end{remark}

Denote by $\LBcs_{1}(\Delta,d,r)$ the expression which results if we replace $n_\tau$ by the number of distinct planes surrounding the edge $\tau$ (and thus replace $t_\tau$ by the minimum of $r+2$ and the number of distinct planes surrounding $\tau$) in $\LBcs(\Delta,d,r)$.  It is shown in~\cite{ANS96} that $\dim \hspl^r_d(\Delta)=\LBcs_1(\Delta,d,r)$ for $d\ge 3r+2$.  

From the calculation above, $\LBcs_1(\Delta,d,r)=\chi(\calR/\calJ,d)$ for $d>2r$, so in particular $\LBcs_1(\Delta,d,r)\le \dim \hspl^r_d(\Delta)$ for $d>2r$.  Now put $f(d,r)=\binom{d+2}{2}+\binom{d+1-r}{2}$.  Since the plane $z=0$ cuts straight through $\Delta$, the spline $F$ which evaluates to $z^{r+1}$ on every upper tetrahedron and $0$ on every lower tetrahedron is in $\hspl^r_{r+1}(\Delta)$.  It follows that $f(d,r)\le \hspl^r_d(\Delta)$ for any $d\ge 0$.  Mimicking Theorem~\ref{thm:LBGenericClosedVertexStars}, we can take $f(d,r)$ as a lower bound on $\dim \hspl^r_d(\Delta)$ when $d\le 2r$ and $\max\{f(d,r),\LBcs_1(\Delta,d,r)\}$ as the lower bound on $\dim \hspl^r_d(\Delta)$ when $d>2r$.  In Table~\ref{tbl:Bipyramid} values of $f(d,r)$ and $\LBcs_1(\Delta,d,r)$ are listed for $1\le r\le 4$ and various $d$.  These are compared to the actual dimension of $\hspl^r_d(\Delta)$, the values of which are in the column labeled \textit{symdim}.  (Again, these values were computed using the AlgebraicSplines package in Macaulay2.)

How optimal is this lower bound?  In particular, is it possible to improve the lower bound by using $\chi(\calR/\calJ,d)$ (and thus a computation of $\dim \tJ(\gamma)_d$) when $d\le 2r$?  We show the answer is no.  First of all, an application of the upper bound from~\cite[Theorem~4.1]{D3} shows that in fact $\dim \hspl^r_d(\Delta)=f(r,d)$ for $d\le \lfloor\frac{3r+1}{2}\rfloor=D_\gamma$.  This gives a range of degrees $D_\gamma< d\le 2r$ where it might be possible to improve the lower bound by using $\chi(\calR/\calJ,d)$ instead of $f(r,d)$.  Since $\chi(\calR/\calJ,d)\le \LBcs_1(\Delta,d,r)$, if we show that $\LBcs_1(\Delta,d,r)\le f(r,d)$ for this range of values, then we have shown that we cannot improve the lower bound by using $\chi(\calR/\calJ,d)$ for values of $d$ which are smaller than $2r$.

In what follows we assume $r=4k-1$ and $k\ge 1$ to simplify calculations.  We compute that
\[
\LBcs_1(d)=\LBcs_1(\Delta,d,4k-2)=2\binom{d+2}{2}-10\binom{d+2-4k}{2}+10\binom{d+2-6k}{2}+2\binom{d+2-5k}{2}.
\]
For $d\ge 6k-2$,
\[
\LBcs_1(d)=5d^2 - 15(4k - 1)d + 200k^2 - 90k + 10.
\]
Also for $d\ge 4k-2$,
\[
f(d)=f(d,4k-1)=d^2 - (4k - 3)d + 8k^2 - 6k + 2.
\]
We can check that the polynomial $\LBcs_1(d)-f(d)$ attains a minimum of $-4k^2-1$ at $d=7k-3/2$.  Furthermore the roots of $\LBcs(d)-f(d)$ are
\[
d=7k-3/2\pm \sqrt{16k^2+4}.
\]
Thus $\LBcs(d)<f(d)$ for $6k-2\le d\le 11k-3/2$.  Notice this is long past the value of $d=8k-2$ where $\dim \tJ(\gamma)_d=\binom{d+2}{2}$ and thus $\LBcs(d)=\chi(\calR/\calJ,d)$!  So we cannot improve our lower bound by more careful computations of $\dim \tJ(\gamma)_d$ in degrees $d<2r$.  Similar arguments can be made for $r=4k,4k+1,4k+2$.

\begin{table}
	\renewcommand{\arraystretch}{1.2}
	\begin{tabular}{c|c||c|c|c||c|c|c}
		$r$&$d$&$\binom{d+2}{2}$&$\LBcs(\Delta,d,r)$&\mbox{gendim}&$\binom{d+2}{2}+\binom{d+1-r}{2}$&$\LBcs_1(\Delta,d,r)$&\mbox{symdim}\\
		
		\hline
		
		1&\textbf{2}&    6       &      6     		&     6       &    7     						&           6         &      7       \\
		1  & 3 &        10       &     16     		&    16       &   13     						&           16        &      16       \\
		1  & 4 &    	15  	 &     36     		&    36       &   21     						&           36        &      36      \\
		1  & 5 &    	    	 &     66     		&    66       &   31     						&           66        &      66       \\
		1  & 6 &    	    	 &    106     		&   106       &   43     						&          106        &     106       \\
		1  & 7 &    	    	 &    156     		&   156       &   57     						&          156        &     156       \\
		1  & 8 &    	    	 &    216     		&   216       &   73     						&          216        &     216       \\
		1  & 9 &    	    	 &    286     		&   286       &   91     						&          286        &     286       \\
		
		\hline
		
		2&\textbf{3}&	10		 &       7          &     10      &   11     						&        12           &      11       \\
		2  & 4 & 		15		 &      12          &     15      &   18     						&        17           &      18       \\
		2  & 5 & 		21		 &      27          &     27      &   27     						&        32           &      32       \\
		2  & 6 &  		28		 &      52          &     52      &   38     						&        57           &      57       \\
		2  & 7 &    			 &      87          &     87      &   51     						&        92           &      92       \\
		2  & 8 &   				 &     132          &    132      &   66     						&       137           &     137       \\
		2  & 9 &  				 &     187          &    187      &   83     						&       192           &     192       \\
		2  &10 & 				 &     252          &    252      &  102     						&       257           &     257       \\
		2  &11 &   				 &     327          &    327      &  123     						&       332           &     332       \\
		
		\hline
		
		3  & 4 &    	15		 &      15          &     15      &   16     						&        20           &      16       \\
		3&\textbf{5}&  	21		 &      15          &     21      &   24     						&        20           &      24       \\
		3  & 6 &  		28		 &      25          &     28      &   34     						&        30           &      34       \\
		3  & 7 &  		36		 &      45          &     45      &   46     						&        50           &      51       \\
		3  & 8 &  		45		 &      75          &     75      &   60     						&        80           &      80       \\
		3  & 9 &  				 &     115          &    115      &   76     						&       120           &      120      \\
		3  &10 &  				 &     165          &    165      &   94     						&       170           &      170      \\
		3  &11 &  				 &     225          &    225      &  114     						&       230           &      230      \\
		3  &12 &  				 &     295          &    295      &  136     						&       300           &      300      \\
		
		\hline
		
		4  & 5 &   		21		 &      27          &     21      &   22     						&         32          &      22       \\
		4&\textbf{6}&	28		 &      22          &     28      &   31     						&         32          &      31       \\
		4  & 7 &   		36		 &      27          &     36      &   42     						&         37          &      42       \\
		4  & 8 &   		45		 &      42          &     45      &   55     						&         52          &      56       \\
		4  & 9 &   		55		 &      67          &     67      &   70     						&         77          &      78       \\
		4  &10 &   		66		 &     102          &    102      &   87     						&        112          &      112      \\
		4  &11 &   				 &     147          &    147      &  106     						&        157          &      157      \\
		4  &12 &   				 &     202          &    202      &  127     						&        212          &      212      \\
		4  &13 &   				 &     267          &    267      &  150     						&        277          &      277      \\
		
	\end{tabular}
	
	\vspace{5 pt}
	
	\caption{Bounds for generic and non-generic bipyramids in Sections~\ref{ss:genbi} and~\ref{ss:nongen}}\label{tbl:Bipyramid}
\end{table}

\subsection{Non-simplicial vertex star.}\label{ss:cube}
For simplicity of exposition we have only considered the case where $\Delta$ is a simplicial vertex star.  However, Theorems~\ref{thm:LBGenericClosedVertexStars} and~\ref{thm:WhitelyGenericLowDegree} both hold verbatim if $\Delta$ is instead a \textit{polytopal} vertex star.  A polytopal vertex star is a collection of polytopes whose intersection contains a vertex $\gamma$ and satisfies that the intersection of each pair of polytopes is a face of both.  The main difference between splines on polytopal as opposed to simplicial vertex stars is that $\dim H_0(\calJ)_d$ may not vanish in large degree (see~\cite{TSchenck09}), however this is both highly \textit{non-generic} behavior and only makes $\dim \hspl^r_d(\Delta)$ larger.  Thus this behavior has no impact on whether $\LBcs(\Delta,d,r)$ is a lower bound on $\hspl^r_d(\Delta)$.

We briefly remark on the details that need to be checked to ensure that Theorems~\ref{thm:LBGenericClosedVertexStars} and~\ref{thm:WhitelyGenericLowDegree} carry over to polytopal vertex stars.  First, Theorem~\ref{thm:LBGenericClosedVertexStars} hinges on Proposition~\ref{prop:GenericWaldschmidt} and Corollary~\ref{cor:regularity}.  These easily carry over to polytopal vertex stars, as the simplicial nature of $\Delta$ plays no role in the proofs.  Now suppose $\Delta$ is a polytopal vertex star and $\Delta'$ is a triangulation of it which is also a simplicial vertex star (to justify the existence of such a triangulation takes a couple sentences, but it is not difficult).  Then $\hspl^r(\Delta)$ includes into $\hspl^r(\Delta')$.  By Theorem~\ref{thm:WhitelyGenericLowDegree} $\dim \hspl^r_d(\Delta')=\binom{d+2}{2}$ for $d\le D_\gamma$, hence $\dim \hspl^r_d(\Delta)=\binom{d+2}{2}$ for $d\le D_\gamma$ as well. 

We give a simple illustration.  Let $\Delta$ be the barycentric subdivision of a cube ($G_\Delta$ is shown in Figure~\ref{fig:Cube}).  Then $f^\circ_2=12,f^\circ_1=8,$ and $n_\tau=3$ for every interior edge (and so $t_\tau=3$ if $r\ge 1$).
\begin{figure}[htbp]
	\centering
	\includegraphics[height=3.0cm]{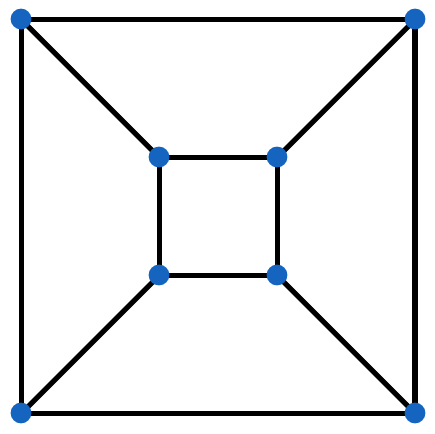}
	\caption{Graph of the barycentric subdivision of a cube.}\label{fig:Cube}
\end{figure}
We have
\[
\LBcs(\Delta,d,r)=2\binom{d+2}{2}-12\binom{d+1-r}{2}+8\left(a_\tau\binom{d+1-q_\tau}{2}+b_\tau \binom{d+2-q_\tau}{2}\right),
\]
where $q_\tau=\lfloor 3(r+1)/2\rfloor$, $a_\tau$ is the remainder when $3(r+1)$ is divided by two, and $b_\tau=2-a_\tau$.  If $r=2k-1$ then
\[
\LBcs(\Delta,d,2k-1)=2\binom{d+2}{2}-12\binom{d+2-2k}{2}+16\binom{d+2-3k}{2}
\]
and if $r=2k$ ($k>0$) then
\[
\LBcs(\Delta,d,2k)=2\binom{d+2}{2}-12\binom{d+1-2k}{2}+8\left(\binom{d-3k}{2}+\binom{d+1-3k}{2}\right).
\]
In Table~\ref{tbl:Cube} we list the values of $\LBcs(\Delta,d,r)$ for $1\le r\le 3$ and various $d$.  The actual values of $\dim \hspl^r_d(\Delta)$ appear in the \textit{gendim} column.  The numbers $D_\gamma$ for each value of $r$ appear in bold (they are the same as in Table~\ref{tbl:Bipyramid}).

\begin{table}[htbp]
	\centering
	\renewcommand{\arraystretch}{1.2}
	\begin{tabular}{cc}
		\begin{tabular}{c|c||c|c|c}
			$r$&$d$&$\binom{d+2}{2}$&$\LBcs(\Delta,d,r)$&\mbox{gendim}\\
			
			\hline
			
			1&\textbf{2}&    6       &      0     		&     6      \\
			1  & 3 &        10       &      0     		&    10      \\
			1  & 4 &    	15  	 &      6     		&    15      \\
			1  & 5 &    	21  	 &     18     		&    21      \\
			1  & 6 &    	28  	 &     36     		&    36      \\
			1  & 7 &    	    	 &     60     		&    60      \\
			1  & 8 &    	    	 &     90     		&    90      \\
			1  & 9 &    	    	 &    126     		&   126      \\
			
			\hline
			
			2&\textbf{3}&	10		 &       8          &     10     \\
			2  & 4 & 		15		 &       2          &     15     \\
			2  & 5 & 		21		 &       2          &     21     \\
			2  & 6 &  		28		 &       8          &     28     \\
			2  & 7 &    	36		 &      20          &     36     \\
			2  & 8 &   		45		 &      38          &     45     \\
		\end{tabular}
		& \quad
		\begin{tabular}{c|c||c|c|c}
			$r$&$d$&$\binom{d+2}{2}$&$\LBcs(\Delta,d,r)$&\mbox{gendim}\\
			
			\hline
			
			2  & 9 &  		55		 &      62          &     62     \\
			2  &10 & 				 &      92          &     92     \\
			2  &11 &   				 &     128          &    128     \\
			\hline
			
			3&\textbf{5}&  	21		 &       6          &     21     \\
			3  & 6 &  		28		 &       0          &     28     \\
			3  & 7 &  		36		 &       0          &     36     \\
			3  & 8 &  		45		 &       6          &     45     \\
			3  & 9 &  		55		 &      18          &     55     \\
			3  &10 &  		66		 &      36          &     66     \\
			3  &11 &  		78		 &      60          &     78     \\
			3  &12 &  		91		 &      90          &     91     \\
			3  &13 &  		105		 &     126          &    126     \\
			3  &14 &  				 &     168          &    168     \\
			3  &15 &  				 &     216          &    216     \\
			
			
			
		\end{tabular}
	\end{tabular}
	\vspace{5 pt}
	
	\caption{Bounds for the generic cube in Section~\ref{ss:cube}}\label{tbl:Cube}
\end{table}

\section{Concluding Remarks}\label{sec:ConcludingRemarks}
In this paper we have shown that the formula of Alfeld, Neamtu, and Schumaker in~\cite{ANS96} for homogeneous splines on closed tetrahedral vertex stars is a lower bound for $\dim \hspl^r_d(\Delta)$ when $d>D_\gamma$, where $D_\gamma$ is defined in~\eqref{eq:Dgamma}.  Using arguments due to Whiteley~\cite{WhiteleyComb} we have also shown that, for generic vertex positions, $\hspl^r_d(\Delta)$ consists only of global polynomials when $d<D_\gamma$.  

Our arguments suggest that, as in the planar case, the main obstruction to computing the dimension of the spline space on a vertex star is the nontrivial homology module of the Billera-Schenck-Stillman chain complex.  The contributions of this homology module are largely mysterious.  For instance, we see from Table~\ref{tbl:Bipyramid} that there are likely interesting contributions of this homology module to $\hspl^r_d(\Delta)$ for $r=3$, $d=7$ and $r=4$, $d=8,9$, where $\Delta$ is the non-generic bipyramid in Section~\ref{ss:nongen}.  These contributions are `unexpected' in the sense that we could not predict these jumps from either of the lower bounds in Section~\ref{ss:nongen}.  We did not find any example of a \textit{generic} closed vertex star which had similar behavior.  This leads us to the following question.

\begin{question}\label{ques:GenericClosedStars}
	If $\Delta$ is a generic closed vertex star and $d>D_\gamma$, is it true that 
	\[
	\dim \hspl^r_d(\Delta)=\max\left\lbrace\binom{d+2}{2},\LBcs(\Delta,d,r)\right\rbrace?
	\]
\end{question}

Surprisingly, it seems more difficult to pose the analog of Question~\ref{ques:GenericOpenStars} for open vertex stars.  Morally speaking, homogeneous splines on open tetrahedral vertex stars are indistinguishable from splines on planar triangulations, so we attempt to formulate Question~\ref{ques:GenericClosedStars} when $\Delta$ is a planar triangulation.  In this case $\LBos(\wDelta,d,r)\le \dim\spl^r_d(\Delta)$, where $\wDelta$ is the open vertex star obtained by coning over $\Delta$, and $\LBos(\wDelta,d,r)$ is simply Schumaker's lower bound from~\cite{SchumakerLower}.

One would like to ask the straightforward analog of Question~\ref{ques:GenericClosedStars}: If $\Delta$ is a generic triangulation, does $\dim \spl^r_d(\Delta)=\max\left\lbrace\binom{d+2}{2},\LBos(\wDelta,d,r)\right\rbrace$?  Unfortunately there are a few sub-configurations of $\Delta$ which can force this equality to fail.  We point out two of these, and would be curious to know if there are more.

First, suppose there is an edge in $\Delta$ both of whose vertices are on the boundary of $\Delta$; we call such an edge a \textit{chord} of $\Delta$.  A chord clearly gives rise to an extra spline of degree $r+1$ even for generic vertex positions.  Another configuration which gives rise to splines of low degree is the following: suppose $\sigma_1$ and $\sigma_2$ are adjacent triangles of $\Delta$ with vertices $\{\gamma_1,\gamma_2,\gamma\}$ and $\{\gamma_1,\gamma_2,\gamma'\}$, respectively.  We call $\sigma_1,\sigma_2$ a \textit{boundary pair} if the edges $\{\gamma_1,\gamma\}$ and $\{\gamma_1,\gamma'\}$ are both boundary edges of $\Delta$ or the edges $\{\gamma_2,\gamma\}$ and $\{\gamma_2,\gamma'\}$ are both boundary edges of $\Delta$.  If $\sigma_1,\sigma_2$ is a boundary pair then there is a spline on $\Delta$ supported only on $\sigma_1$ and $\sigma_2$ of degree $\lfloor3(r+1)/2\rfloor$.

\begin{question}\label{ques:GenericOpenStars}
	If $\Delta$ is a generic triangulation without a chord or a boundary pair, does
	\[
	\dim \spl^r_d(\Delta)=\max\left\lbrace\binom{d+2}{2},\LBos(\Delta,d,r)\right\rbrace
	\]
	for every $d\ge 0$?  If not, can the failure of equality be linked to a configuration like the chord or the boundary pair?
\end{question}

These questions are related to Schenck's `$2r+1$' conjecture~\cite{CohVan}, which states that $\dim \spl^r_d(\Delta)$ is given by Schumaker's lower bound (equivalently the graded Euler characteristic of $\calR/\calJ$) for $d\ge 2r+2$.  Recently Yuan and Stillman~\cite{YS19} found a counterexample to this conjecture, however they point out that the conjecture is still open for \textit{generic} triangulations.  If Schenck's conjecture is true for generic triangulations, then it implies that $\LBos(\wDelta,d,r)\ge \binom{d+2}{2}$ for $d\ge 2r+2$.  On the other hand, if Question~\ref{ques:GenericOpenStars} has a positive answer, then (modulo accounting for chords and boundary pairs) Schenck's conjecture for generic triangulations can be rephrased as: If $d\ge 2r+2$, then $\LBos(\Delta,d,r)\ge \binom{d+2}{2}$\,.  Checking this inequality simply amounts to estimating the roots of a quadratic polynomial.

\appendix

\section{}\label{app:1}

This appendix is devoted to the proof of Proposition~\ref{prop:5Vert}, the last remaining case of Theorem~\ref{thm:WhitelyGenericLowDegree}.  We direct the reader to~\cite{Comb} for unfamiliar terminology in the proof.

\begin{proposition}\label{prop:5Vert}
	If $\Delta$ is a generic closed tetrahedral vertex star with $f^\circ_1=5$ then $\dim \hspl^r_d(\Delta)=\binom{d+2}{2}$ for $d\le (5r+2)/3$.
\end{proposition}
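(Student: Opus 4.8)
Since global polynomials of degree at most $d$ are splines, the bound $\dim\hspl^r_d(\Delta)\ge\binom{d+2}{2}$ is automatic, and only the reverse inequality needs proof. By Lemma~\ref{lem:5Vert} the combinatorial type is forced: $\Delta$ is the cone from its central vertex $\gamma$ (placed at the origin) over the boundary of a triangular bipyramid, so $f^\circ_2=9$. Write $N,S$ for the two ``poles'' of the bipyramid and $E_1,E_2,E_3$ for its three ``equatorial'' vertices, so that the interior edges $\gamma N$ and $\gamma S$ have $n_\tau=3$ and the interior edges $\gamma E_1,\gamma E_2,\gamma E_3$ have $n_\tau=4$. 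By Lemma~\ref{lem:Generic} it is enough to treat generic vertex coordinates of this combinatorial type. By Propositions~\ref{prop:FrequentlyUsedIsomorphisms} and~\ref{prop:EulerCharacteristicAndDimension}, the statement $\dim\hspl^r_d(\Delta)=\binom{d+2}{2}$ is equivalent to $H_2(\calJ[\Delta])_d=0$, i.e.\ to the assertion that, for $d\le(5r+2)/3$, the only solution of the system of five $\tS$-linear equations $\sum_{\sigma\supseteq\tau}\pm h_\sigma\ell_\sigma^{r+1}=0$ (one per interior edge $\tau$) in the nine cofactors $h_\sigma\in\tS_{d-r-1}$ (one per interior two-face $\sigma$) is the trivial one.

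The plan is to use the bipyramidal structure to peel off the equations group by group. The equation indexed by $\gamma N$ involves only the three ``north'' cofactors $h_{\gamma NE_1},h_{\gamma NE_2},h_{\gamma NE_3}$; since the forms $\ell_{\gamma NE_1},\ell_{\gamma NE_2},\ell_{\gamma NE_3}$ span only the two-dimensional space of linear forms vanishing on the line $\gamma N$ and are pairwise coprime for generic coordinates, this equation says precisely that $(h_{\gamma NE_1},h_{\gamma NE_2},h_{\gamma NE_3})$ is a syzygy of the $(r+1)$st powers of three generic linear forms in two variables. The relevant syzygy module is free of rank two, isomorphic to $\tS(-s_1)\oplus\tS(-s_2)$ with $s_1+s_2=3(r+1)$ and $|s_1-s_2|\le 1$, hence vanishes in all degrees $d<\lfloor 3(r+1)/2\rfloor$; the same applies to the ``south'' cofactors. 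Because $(5r+2)/3<2(r+1)$, once the north and south cofactors vanish each of the three remaining equations (indexed by $\gamma E_1,\gamma E_2,\gamma E_3$) collapses to a relation $\pm h_\sigma\ell_\sigma^{r+1}\pm h_{\sigma'}\ell_{\sigma'}^{r+1}=0$ between the two ``equatorial'' cofactors incident to that edge, with $\ell_\sigma,\ell_{\sigma'}$ coprime and $\deg h_\sigma=d-r-1<r+1$, which forces $h_\sigma=h_{\sigma'}=0$. This already proves the proposition in the range $d\le\lfloor 3(r+1)/2\rfloor-1$; equivalently, it recovers the bound one gets by regarding $\Delta$ as a single edge split of the Alfeld split of a tetrahedron and invoking Whiteley's argument~\cite{WhiteleyComb}.

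The remaining work — and the heart of the matter — is the narrow band $\lfloor 3(r+1)/2\rfloor\le d\le(5r+2)/3$, where the closed stars of the edges $\gamma N$ and $\gamma S$ begin to carry nontrivial splines and the decoupling above fails. My plan is to substitute the two explicit generators of each of the north and south syzygy modules into the three equations indexed by $\gamma E_1,\gamma E_2,\gamma E_3$, turning the problem into the claim that a single linear map is injective: its source is small (two polynomials of degrees $d-s_1,d-s_2$ for each of north and south, together with the three equatorial cofactors, each of degree $d-r-1$), while its target is the three copies of $\tS_d$ furnished by those equations, so for $d\le(5r+2)/3$ the dimension count makes injectivity the expected behavior. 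Proving that the generic rank really is the expected one is the main obstacle; I would establish it on a concrete configuration carrying the cyclic $\Z/3$ symmetry that permutes $E_1,E_2,E_3$ and fixes $\gamma,N,S$, since this block-diagonalizes the three equations into the trivial isotypic component and a conjugate pair of characters and reduces the rank computation to three small, uncoupled pieces, and then pass to generic coordinates by semicontinuity (Lemma~\ref{lem:Generic}). An equivalent alternative is to stereographically project the link of $\gamma$ from $N$, which presents $\hspl^r_d(\Delta)$ as the spline space of the generalized triangulation consisting of the Clough-Tocher split of the triangle $E_1E_2E_3$ with center $S$ together with three unbounded cells sharing a vertex at infinity whose three incident edges are mutually parallel, and to run Whiteley's vertex splitting induction there; the parallel edges at infinity are exactly the feature that pushes the threshold from $(3r+1)/2$ up to $(5r+2)/3$. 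Either way, the delicate point is the injectivity of this map throughout the critical range $d\le(5r+2)/3$.
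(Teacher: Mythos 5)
Your decoupling argument is sound for $d<\lfloor 3(r+1)/2\rfloor$: the two degree-$3$ equations each involve only their own three cofactors, the associated linear forms span a two-dimensional space, so the standard degree bound for syzygies of three powers of linear forms in two variables forces those cofactors to vanish, after which the three remaining equations collapse to pairwise relations which are killed by coprimality since $(5r+2)/3<2(r+1)$. But the proposition is not proved by this range alone, and you have not closed the gap in the band $\lfloor 3(r+1)/2\rfloor\le d\le (5r+2)/3$. You acknowledge this explicitly: after substituting the syzygy generators at the two poles into the three remaining equations, the injectivity of the resulting map is flagged as ``the delicate point,'' to be settled either by a rank computation on a $\Z/3$-symmetric configuration plus semicontinuity, or by stereographic projection and a Whiteley-type induction on a generalized triangulation with parallel edges at infinity --- but neither is carried out. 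This band has length on the order of $(r-5)/6$, so it grows without bound with $r$; it is the actual content of the proposition for large $r$, not a boundary case, and the statement is unproved without it.

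For comparison, the paper's proof does not split into an easy range and a hard band. It isolates one cofactor --- the one attached to a two-face that appears simultaneously in a degree-$3$ equation and a degree-$4$ equation --- and observes that this cofactor must lie in the intersection of two colon ideals, one coming from each incident equation. The initial degree of that intersection is then bounded by passing to lexicographic initial ideals, using that one of them is a lex segment ideal (by~\cite{RegSplines}), and minimizing a linear functional over the resulting monomial staircase; this yields the $(5r+2)/3$ threshold uniformly in $d$ with no appeal to a special symmetric configuration, to semicontinuity in the critical range, or to an explicit syzygy-substitution step. If you want your approach to succeed, you must actually establish the injectivity in the band, and it is worth knowing that the colon-ideal argument sidesteps precisely the rank computation you left open.
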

\begin{proof}
	Lemma~\ref{lem:5Vert} shows that there is only one possibility for $G_\Delta$ (the graph on the left hand side in Figure~\ref{fig:5Vert}).  The corresponding closed tetrahedral star is a barycentric subdivision of a triangular bipyramid.  Thus we show that, for generic vertex positions, the barycentric subdivision of a triangular bipyramid has no non-trivial splines in degree $d\le (5r+2)/3$.
	
	The non-trivial $C^r$ splines on $\Delta$ are represented as the kernel of the map
	\[
	\bigoplus_{\sigma\in\Delta^\circ_2} \tJ(\sigma)\xrightarrow{\partial_2} \bigoplus_{\tau\in\Delta^\circ_1} \tJ(\tau).
	\]
	The graph $G_\Delta$ of the centrally triangulated triangular bipyramid is shown on the right in Figure~\ref{fig:5Vert}.  Orient the edge $\{i,j\}$ where $i<j$ by $i\to j$.  With this choice of orientation,  we can represent a tuple $G=(g_\tau\ell_\tau^{r+1})_{\tau\in\Delta^\circ_1}\in\ker \partial_2$ by the equations
	
	\begin{align}
	\label{e:1}
	-g_{12}\ell_{12}^{r+1}-g_{13}\ell_{13}^{r+1}-g_{14}\ell_{14}^{r+1}-g_{15}\ell_{15}^{r+1} & =0\\
	\label{e:2}
	g_{12}\ell_{12}^{r+1}-g_{23}\ell_{23}^{r+1}-g_{25}\ell_{25}^{r+1} & =0\\
	\label{e:3}
	g_{13}\ell_{13}^{r+1}+g_{23}\ell_{23}^{r+1}-g_{34}\ell_{34}^{r+1}-g_{35}\ell_{35}^{r+1} & =0\\
	\label{e:4}
	g_{14}\ell_{14}^{r+1}+g_{34}\ell_{34}^{r+1}-g_{45}\ell_{45}^{r+1} & =0\\
	\label{e:5}
	g_{15}\ell_{15}^{r+1}+g_{25}\ell_{15}^{r+1}+g_{35}\ell_{35}^{r+1}+g_{45}\ell_{45}^{r+1} & =0
	\end{align}
	
	The polynomials $g_{ij}$ are the \textit{smoothing cofactors} of the associated spline.  Suppose that $G=(g_\tau\ell_\tau^{r+1})_{\tau\in\Delta^\circ_1}\in\ker \partial_2$ is non-zero.  We will show that $\deg(G)>(5r+2)/3$.
	
	Notice first that each $g_{ij}$ appears in one of the equations~\eqref{e:1},~\eqref{e:2},~\eqref{e:3}, or~\eqref{e:4}.  Hence if $G\neq 0$ its constituents must satisfy one of the equations~\eqref{e:1},~\eqref{e:2},~\eqref{e:3}, or~\eqref{e:4} non-trivially.  Suppose that $G$ only satisfies~\eqref{e:5} trivially (i.e. $g_{15}=g_{25}=g_{35}=g_{45}=0$).  Then $G$ must still satisfy the previous equations.  Suppose one of $g_{12}$ or $g_{23}$ is non-zero; then by~\eqref{e:2} $g_{12}\ell_{12}^{r+1}-g_{23}\ell_{23}^{r+1}=0$ hence both $g_{12}$ and $g_{23}$ are non-zero.  Clearly in this case $g_{12}$ is a multiple of $\ell_{23}^{r+1}$ and $g_{23}$ is a multiple of $\ell_{12}^{r+1}$, hence $G$ has degree at least $2(r+1)>(5r+2)/3$.  Likewise if one of $g_{14}$ or $g_{34}$ is non-zero then both must be and $G$ has degree at least $2(r+1)$ by~\eqref{e:4}.  If $g_{12}=g_{23}=g_{14}=g_{34}=0$ in addition to $g_{15}=g_{25}=g_{35}=g_{45}=0$, then we can argue by~\eqref{e:1} or~\eqref{e:3} that $G$ will have degree at least $2(r+1)$ in the same way.
	
	Now suppose that $g_{14}=g_{34}=g_{45}=0$.  Then the spline $G$ restricts to a spline on the Alfeld split of a tetrahedron.  As before, if $G$ is non-trivial it must have degree at least $2r+1>(5r+2)/3$ by~\cite{S14}.
	
	So we may assume that $G$ satisfies both~\eqref{e:4} and~\eqref{e:5} non-trivially.  Furthermore we can assume that $g_{14},g_{34},g_{45}$ are all non-zero and at least two of $g_{15},g_{25},$ and $g_{35}$ are non-zero (otherwise we could repeat the argument above, yielding that $G$ has degree at least $2(r+1)$).  Notice that $g_{45}$ gives a non-zero element of the intersection
	\[
	I=\langle \ell_{34}^{r+1},\ell_{14}^{r+1}\rangle:\ell_{45}^{r+1}\cap \langle \ell_{15}^{r+1},\ell_{25}^{r+1},\ell_{35}^{r+1}\rangle:\ell_{45}^{r+1},
	\]
	where $:$ represents a \textit{colon ideal}.  That is, if $J$ is an ideal and $f$ a polynomial, $J:f$ is the ideal of all polynomials $g$ so that $fg\in J$.
	
	We claim that this intersection is empty in degrees $d\le (5r+2)/3$, which will complete the proof.  To prove this claim, we make a change of variables so that $\gamma_4$ points along the positive $z$-axis, $\gamma_5$ points along the positive $x$-axis, $\gamma_3$ points along the positive $y$-axis, and $\gamma_1$ points along the ray $(t,t,t)$ where $t<0$. Under this change of coordinates, the ideal $I$ becomes
	\[
	I=\langle x^{r+1},(x-y)^{r+1}\rangle:y^{r+1}\cap \langle (y-z)^{r+1},\ell_{25}^{r+1},z^{r+1}\rangle:y^{r+1},
	\]
	where $\ell_{25}$ is a linear form in the variables $y$ and $z$.  Put
	\[
	I_1=\langle x^{r+1},(x-y)^{r+1}\rangle:y^{r+1} \quad\mbox{and}\quad I_2=\langle (y-z)^{r+1},\ell_{25}^{r+1},z^{r+1}\rangle:y^{r+1}.
	\]
	In the rest of the proof we will show that the initial ideals $\mbox{in}(I_1)$ and $\mbox{in}(I_2)$ with respect to lexicographic order do not intersect in degrees $d\le (5r+2)/3$, which will also imply that $(I_1\cap I_2)_d=\emptyset$.
	
	Put $J_1=\langle x^{r+1},y^{r+1}\rangle:(x+y)^{r+1}$; $I_1$ can be obtained from $J_1$ by the change of coordinates $x\to x$, $y\to -x+y$.  In~\cite[Lemma~7.18]{RegSplines} it is shown that the initial ideal $\mbox{in}(J_1)$ with respect to lexicographic order consists of the $\dim (J_1)_d$ lexicographically largest monomials in the variables $x$ and $y$.  In other words, $\mbox{in}(J_1)$ in lexicographic order is a so-called \textit{lex segment} ideal (see~\cite[Chapter~2]{Comb}).  
	
	We claim that $\mbox{in}(I_1)$ is also a lex segment ideal.  To prove this claim we consider the effect of the change of coordinates $x\to x, y\to -x+y$ on $\mbox{in}(J_1)$ in degree $d$.  Under this change of coordinates, the vector space $\mbox{in}(J_1)_d$ becomes
	\[
	x^d, x^{d-1}(-x+y), x^{d-2}(-x+y)^2,\ldots,x^{d-a}(-x+y)^a,
	\]
	where $a+1=\dim (I_1)_d$.  Clearly the vector space spanned by these monomials is the same as the vector space spanned by
	\[
	x^d, x^{d-1}y, x^{d-2}y^2,\ldots,x^{d-a}y^a.
	\]
	It follows that $\mbox{in}(J_1)\subset\mbox{in}(I_1)$.  Since $I_1$ and $J_1$ have the same Hilbert function, we must in fact have $\mbox{in}(J_1)=\mbox{in}(I_1)$, so $\mbox{in}(I_1)$ is also a lex segment ideal.
	
	Finally, we use some information about the Hilbert functions of $I_1$ and $I_2$.  The degrees of syzygies of ideals in two variables generated by powers of linear forms are described explicitly in~\cite{MinReg} (uniform powers) and~\cite{FatPoints} (non-uniform powers).  From this analysis it follows that $\alpha(I_2)=\lfloor (r+1)/3 \rfloor$ (that is, the minimal generators of $I_2$ are in degrees at least $\lfloor (r+1)/3 \rfloor$).  Put $K=\lfloor (r+1)/3 \rfloor$ and $N=\langle y,z\rangle^K$.  Clearly $I_2\subset N$.  We show that $\alpha(\mbox{in}(I_2)\cap N)> (5r+2)/3$.
	
	It turns out that $I_1$ is a complete intersection generated in degrees $\lfloor (r+1)/2\rfloor,\lceil (r+1)/2\rceil$.  This implies that the Hilbert function of $I_1$ has the following form (for a proof see~\cite[Corollary~7.17]{RegSplines}):
	\[
	\dim (I_1)_d=\left\lbrace
	\begin{array}{ll}
	0 & 0\le d\le \lfloor (r+1)/2\rfloor\\
	2d+1-r & \lfloor (r+1)/2\rfloor\le d<r\\
	d+1 & d\ge r.
	\end{array}\right.
	\]
	Coupled with the fact that $\mbox{in}(I_1)$ is a lex segment ideal, we obtain that a monomial $x^ay^bz^c\in\mbox{in}(I_2)$ if and only if $2(a+b)+1-r\ge b+1$, or $2a+b\ge r$.  Similarly, $x^ay^bz^c\in N$ if and only if $b+c\ge \lfloor (r+1)/3 \rfloor$.
	
	Hence to find the least degree of a monomial in $\mbox{in}(I_1)\cap N$, we solve the integer linear program: minimize $a+b+c$ subject to $2a+b\ge r$ and $b+c\ge \lfloor (r+1)/3 \rfloor$.  Over the rationals, it is straightforward to check that this is minimal when $c=0$, $b=\lfloor (r+1)/3 \rfloor$, and $a= \frac{1}{2}(r-\lfloor (r+1)/3 \rfloor)$ with a value of $a+b+c=\frac{1}{2}\lfloor \frac{4r+1}{3}\rfloor$.  Thus $g_{45}$ must have degree greater than $\frac{1}{2}\lfloor \frac{4r+1}{3}\rfloor$, and so $g_{45}\ell_{45}^{r+1}$ must have degree greater than $r+1+\frac{1}{2}\lfloor \frac{4r+1}{3}\rfloor$.  To prove the statement of the Proposition, it suffices to show that $\frac{5r+2}{3}<r+1+\frac{1}{2}\lfloor \frac{4r+1}{3}\rfloor$, which is equivalent to $\frac{4r+1}{3}-1<\lfloor \frac{4r+1}{3}\rfloor$.  The last inequality is clearly true.
\end{proof}


\end{document}